\newtheorem{theorem}{Theorem}[section]
\newtheorem{corollary}[theorem]{Corollary}
\newtheorem{lemma}[theorem]{Lemma}
\newtheorem{proposition}[theorem]{Proposition}
\theoremstyle{definition}
\newtheorem{e-definition}[theorem]{Definition}
\theoremstyle{remark}
\newtheorem{remark}[theorem]{Remark}
\numberwithin{equation}{section}
\DeclareMathOperator{\Ker}{\mathrm{Ker}}
\DeclareMathOperator{\Ima}{\mathrm{Im}}
\DeclareMathOperator{\ord}{\mathrm{ord}}
\DeclareMathOperator{\Rea}{\mathrm{Re}}
\DeclareMathOperator{\diag}{\mathrm{diag}}
\DeclareMathOperator{\adj}{\mathrm{adj}}
\begin{document}


\keywords{Quaternions, Wiener algebras, Wiener-Hopf factorization, Riemann-Hilbert problem, difference equations, convolution equations, rational matrix functions}
\subjclass{13J05,15A09,30E25,47B35,\\47B39,47S10}


\title{Quaternionic Wiener algebras, factorization and applications}
\date{2016-12-14}

\author[Y. Shelah]{Yonatan Shelah}
\address{Department of Mathematics, University of Michigan and School of Mathematics, Tel Aviv University} 
\email{yonshel@umich.edu}
\date{December 14, 2016}%

\thanks{This paper is based on the author's MSc thesis, which was written when he was a student at Tel Aviv University. The author would like to thank his advisor, Prof. Daniel Alpay, for introducing him to quaternionic analysis and proposing which topics to tackle. His input and encouragement were most valuable.}

\begin{abstract}
We define an almost periodic extension of the Wiener algebras in the quaternionic setting and prove a Wiener-L\'evy type theorem for it, as well as extending the theorem to the matrix-valued case. We prove a Wiener-Hopf factorization theorem for the quaternionic matrix-valued Wiener algebras (discrete and continuous) and explore the connection to the Riemann-Hilbert problem in that setting. As applications, we characterize solvability of two classes of quaternionic functional equations and give an explicit formula for the canonical factorization of quaternionic rational matrix functions via realization.
\end{abstract}

\maketitle
\pagenumbering{gobble}
\pagenumbering{arabic}

\tableofcontents

\section{Introduction and preliminaries}
Our aim is an extension of certain theorems from complex analysis and operator theory to the realm of quaternions. The Wiener algebra has two well-known variants: the discrete one (defined on the unit sphere) and the continuous one (defined on the real line). Both have been extended to the quaternionic (scalar-valued) setting by D. Alpay et al. (see \cite{acks}), successfully carrying over the Wiener-L\'evy theorem. We seek to gain more richness about these quaternionic algebras, namely Wiener-Hopf factorization and the Riemann-Hilbert problem in the matrix-valued case. 

Let us recall that the (complex) discrete Wiener algebra $\mathcal W^{n\times n}$ consists of functions of the form 
$$
F(t)=\sum_{u\in\mathbb Z}F_ue^{iut},
$$ 
where $F_u\in\mathbb C^{n\times n}$ and $\sum_{u\in\mathbb Z}\|F_u\|<\infty$, $\|\cdot\|$ denoting the operator norm. The space $\mathcal W^{n\times n}$ with pointwise multiplication and the norm defined above is a Banach algebra. When $n=1$, we denote the Wiener algebra by $\mathcal W$.\\
The Wiener-L\'evy theorem characterizes the invertible elements of $\mathcal W^{n\times n}$: an element is invertible in $\mathcal W^{n\times n}$ if and only if it is pointwise invertible (in $\mathbb C^{n\times n}$).

A quaternionic valued function $f(p)$ belongs to the (discrete) quaternionic Wiener algebra $\mathcal W_{\mathbb H}$ if it is of the form $\sum_{u\in\mathbb Z}p^u f_u$, where $\sum_{u\in\mathbb Z} |f_u|<\infty$. The sum of two elements in $\mathcal W_{\mathbb H}$ is defined in the natural way, while their product, denoted by $\star$, is obtained by taking the convolution of the coefficients (much like the $\star$ product for polynomials and more generally slice hyperholomorphic functions).   In \cite{acks}, a counterpart of the Wiener-L\'evy theorem was successfully obtained in the aforementioned scalar-valued case. Moreover, the theorem characterized invertibility via a slice of the function with an arbitrary complex plane.

The continuous Wiener algebra $\mathcal W(\mathbb R,\mathbb C^{n\times n})$ consists of functions of the form 
$$
G(t)=C+\int_{\mathbb R}K(u)e^{itu}\, du,
$$
where $C\in\mathbb C^{n\times n}$, $K\in L_1(\mathbb R,\mathbb C^{n\times n})$. There is a Wiener-L\'evy theorem in this case, too, with invertibility being equivalent to the condition 
$$
\inf_{t\in\mathbb R}|\det(f(t))|>0.
$$ 
The scalar-valued case was also extended in \cite{acks} to the quaternionic setting. 

The initial motivation behind this paper was the desire to extend the connection between the Wiener algebras and certain functional equations (of three types: difference, convolution and more generally integro-difference) from the complex setting to the quaternionic one. Consider (over $\mathbb C$) the integro-difference equation
\begin{equation}
\sum_{r\in\mathbb R}\phi(t-r)a_r+\int_{0}^{\infty}\phi(s)k(t-s)ds=f(t),
\end{equation}
where $f\in L_p(0,\infty)$ is given, $\phi\in L_p(0,\infty)$ is sought (we define $\phi(t)=0$ for $t\leq 0$), $k\in L_1(\mathbb R)$, and the $a_r\in\mathbb C$ vanish for $r$ outside of a countable set and satisfy $\sum_{r\in\mathbb R}|a_r|<\infty$. \cite{gf} includes a study of the solvability of (1.1) via the factorization of the symbol function 
$$
\mathcal A(t)=\sum_{r\in\mathbb R}a_re^{irt}+\int_{\mathbb R}k(u)e^{iut}du.
$$
The collection of such functions is denoted by $\mathcal B(\mathbb R)$. Note that functions of the form $\sum_{r\in\mathbb R}a_re^{irt}$ are almost periodic, and their collection is called the almost periodic Wiener algebra, being denoted by $\mathcal APW$. 

The first section of the paper extends the above definitions to quaternions, yet again carrying over the Wiener-L\'evy theorem. In this general framework of Wiener algebras, we also characterize invertibility in the matrix-valued case.

The second section deals with factorization in the quaternionic case, but we limit the discussion to the (matrix-valued) discrete and continuous Wiener algebras. The reason for this limitation is that, in fact, it is known that even in the complex case both $\mathcal B(\mathbb R)^{n\times n}$ and $\mathcal APW^{n\times n}$ fail to ensure factorization in general (that is, for any invertible function) for $n>1$. We should note that the case of $n=1$ is interesting since factorization does hold for it over $\mathbb C$, but our tools are not enough to determine if this remains the case over $\mathbb H$. Regardless, what is notable about the discrete and continuous algebras, is the connection between factorization and the Riemann-Hilbert problem.  We will show that the theorems from the complex setting successfully carry over.

The final two sections deal with applications of factorization: To difference and convolution equations (but not integro-difference ones due to the lack of a factorization theorem) and to canonical factorization of quaternionic rational matrix functions. Again, the results are direct counterparts of the known theorems in the complex cases.

We follow \cite{css} for the following definitions. We denote by $\mathbb S$ the sphere of unitary purely imaginary quaternions. Any element $i\in\mathbb S$ satisfies $i^2=-1$, and using $e_1,e_2,e_3$ as a standard basis, we get
$$
\mathbb S=\{xe_1+ye_2+ze_3:x,y,z\in\mathbb R,x^2+y^2+z^2=1\}.
$$ 
Any two orthogonal elements $i,j\in\mathbb S$ (in the sense $ij=-ji$) form a new basis $i,j, ij$ of $\mathbb H$. Given a quaternion $p_0$, it determines a sphere $[p_0]$ consisting of all the points of the form $q^{-1}p_0q$ for $q\not=0$.\\ 
In some cases, it is useful to use a map which transforms a quaternion into a $2\times2$ complex matrix or, more generally, a quaternionic matrix into a complex matrix of double size (for example, see \cite{z}). This will appear throughout the work (albeit often in the context of a different map to be defined later).

\begin{e-definition}
Let $i\perp j\in\mathbb S$. Any $p\in\mathbb H$ has a unique representation \newline $p=a+bj$, where $a,b\in\mathbb C_i:=\mathbb R+i\mathbb R$. More generally, any $P\in\mathbb H^{n\times n}$ has a unique representation $P=A+Bj$, where $A,B\in\mathbb C_i^{n\times n}$. We define a map $\chi_{i,j}$, which maps $\mathbb H^{n\times n}$ into $\mathbb C_i^{2n\times 2n}$ for any $n\in\mathbb N$, via
$$
\chi_{i,j}(P)=\left[\begin{matrix} 
A &B \\
-\overline{B} &\overline{A}
\end{matrix}\right].
$$
\end{e-definition}

We will write $\chi$ instead of $\chi_{i,j}$ whenever $i,j$ are clear from the context. Note that $\chi$ is injective, additive and multiplicative, hence its utility. The image $\chi(\mathbb H^{n\times n})$ is the set of all matrices $Q\in\mathbb C_i^{2n\times 2n}$ that satisfy the relation $J_n\overline QJ_n^T=Q$, where
$$
J_n=\left[\begin{matrix}
0 &I_n \\
-I_n &0
\end{matrix}\right].
$$

\section{An almost periodic extension of the quaternionic Wiener algebra}

We begin with the scalar-valued case. The following definition is analogous to the one for the discrete Wiener algebra (see \cite{acks}).

\begin{e-definition}
We denote by $\mathcal APW(\mathbb S\mathbb R,\mathbb H)$ the set of functions of the form
\begin{equation}\label{effe}
F(it)=\sum_{u\in\mathbb R}e^{itu}f_u
\end{equation}
where the $f_u$ are quaternions vanishing for all but a countable subset of $u \in \mathbb{R}$, and
\[
\sum_{u\in\mathbb R}|f_u|<\infty.
\]
This set can be endowed with the multiplication
$$
(F\star G)(it)=\sum_{u\in\mathbb R}e^{itu}\sum_{v\in\mathbb R}f_vg_{u-v}.
$$
We also define
$$
\| F\|= \sum_{u\in\mathbb R}|f_u|.
$$
\end{e-definition}

\begin{e-definition}
We denote by $\mathcal B(\mathbb S\mathbb R,\mathbb H)$ the sum of $\mathcal APW(\mathbb S\mathbb R,\mathbb H)$ and $\mathcal W(\mathbb S\mathbb R,\mathbb H)$, which is to say the set of all functions of the form
\begin{equation}\label{effe}
F(it)=\sum_{u\in\mathbb R}e^{itu}f_u+\int_{\mathbb R}e^{itu}\phi_f(u)\, du
\end{equation}
where $\phi_f\in L_1(\mathbb R,\mathbb H)$ and (again) the $f_u$ are quaternions satisfying
\[
\sum_{u\in\mathbb R}|f_u|<\infty.
\]
This set can be endowed with the multiplication
\begin{equation}
\begin{split}
(F\star G)(it)=\sum_{u\in\mathbb R}e^{itu}\sum_{v\in\mathbb R}f_vg_{u-v}+\int_{\mathbb R}e^{itu}du\sum_{v\in\mathbb R}f_v\phi_g(u-v) \\
+\int_{\mathbb R}e^{itu}du\sum_{v\in\mathbb R}\phi_f(u-v)g_v+\int_{\mathbb R}e^{itu} (\phi_f\circ \phi_g)(u)\, du,
\end{split}
\end{equation}
where $\circ$ denotes convolution. We also define
$$
\| F\|= \sum_{u\in\mathbb R}|f_u|+\int_{\mathbb R}|\phi_f(u)| du.
$$
\end{e-definition}

In \cite{acks} it was shown that the discrete and continuous Wiener algebras are real Banach algebras. Along the same lines, we have:

\begin{proposition}
$\mathcal B(\mathbb S\mathbb R,\mathbb H)$ endowed with the $\star$-multiplication is a real Banach algebra, which contains $\mathcal APW(\mathbb S\mathbb R,\mathbb H)$ and $\mathcal W(\mathbb S\mathbb R,\mathbb H)$ as closed subalgebras.
\end{proposition}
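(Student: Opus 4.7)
The plan is to follow the blueprint of the proofs for the discrete and continuous Wiener algebras given in \cite{acks}, but with the extra care needed to handle the coexistence of an almost periodic part and an integral part in the same element of $\mathcal B(\mathbb S\mathbb R,\mathbb H)$.

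First I would settle the well-definedness of the norm, which is the only point where something truly new happens. For $\|F\|$ to depend only on $F$ and not on the pair $(\{f_u\},\phi_f)$, I need the representation $F(it)=\sum_{u\in\mathbb R}e^{itu}f_u+\int_{\mathbb R}e^{itu}\phi_f(u)\,du$ to be unique. Suppose the right-hand side vanishes for all $t$. By Riemann–Lebesgue the integral term is a continuous function of $t$ tending to $0$ at infinity, so the almost periodic series $\sum_u e^{itu}f_u$ is itself an almost periodic function that vanishes at infinity; by Bohr's uniqueness theorem (applied component-wise after splitting $f_u=\alpha_u+\beta_u j$ via some $\chi_{i,j}$, or directly using the Bohr mean $M[e^{-itu_0}\,\cdot\,]$) this forces $f_u=0$ for every $u$. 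The remaining identity $\int e^{itu}\phi_f(u)\,du=0$ together with Fourier uniqueness on $L_1(\mathbb R,\mathbb H)$ (again obtained by applying $\chi$ and reducing to the classical complex statement) gives $\phi_f=0$. Hence the decomposition, and the norm, are unambiguous.

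Next I would verify the algebra axioms. Additive and scalar structures are inherited termwise from $\mathcal APW(\mathbb S\mathbb R,\mathbb H)$ and $\mathcal W(\mathbb S\mathbb R,\mathbb H)$. For the $\star$-product I would check that each of the four terms in the definition converges absolutely and produces an element in the right class: the purely discrete term is the $\mathcal APW$-convolution already studied in \cite{acks}; the two mixed terms $\int e^{itu}\sum_v f_v\phi_g(u-v)\,du$ and its mirror image define elements of $L_1(\mathbb R,\mathbb H)$ by Fubini, since $\sum_v|f_v|\int|\phi_g(u-v)|\,du=\|f\|_{\ell^1}\|\phi_g\|_{L^1}<\infty$; and the last term is the ordinary convolution of two $L_1$-functions. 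Associativity and distributivity are then purely formal manipulations of these sums and integrals, legitimized by the same absolute-convergence bounds. Submultiplicativity falls out at once: adding the four bounds gives $\|F\star G\|\leq(\|f\|_{\ell^1}+\|\phi_f\|_{L^1})(\|g\|_{\ell^1}+\|\phi_g\|_{L^1})=\|F\|\,\|G\|$.

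For completeness, I would take a Cauchy sequence $F^{(n)}$ in $\mathcal B(\mathbb S\mathbb R,\mathbb H)$ and use the uniqueness of representation established above to extract two Cauchy sequences: $\{f^{(n)}_u\}_u$ in $\ell^1(\mathbb R,\mathbb H)$ and $\phi^{(n)}_f$ in $L_1(\mathbb R,\mathbb H)$. Both spaces are complete (the quaternionic case reduces to the complex one via $\chi_{i,j}$), so they yield limits $\{f_u\}$ and $\phi_f$ whose associated function $F$ is the norm-limit of $F^{(n)}$. Finally, the subalgebras $\mathcal APW(\mathbb S\mathbb R,\mathbb H)$ and $\mathcal W(\mathbb S\mathbb R,\mathbb H)$ correspond to the closed conditions $\phi_f\equiv 0$ and $f_u=0$ for $u\neq 0$, respectively; both are stable under $\star$ by direct inspection of the four-term formula, and closed in the $\|\cdot\|$-topology because the uniqueness of representation implies that the projections $F\mapsto(\{f_u\})$ and $F\mapsto(\phi_f,f_0)$ are contractive.

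The main obstacle, conceptually, is the uniqueness of representation in the first paragraph: without it neither the norm nor the $\star$-product can be declared well-defined on $\mathcal B(\mathbb S\mathbb R,\mathbb H)$. Once that is secured by the Riemann–Lebesgue plus Bohr-mean argument, the rest of the proof is a systematic bookkeeping of four terms whose estimates parallel those already carried out in \cite{acks} for the two constituent algebras.
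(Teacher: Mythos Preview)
Your proposal is correct and follows essentially the same approach as the paper: split a Cauchy sequence into its almost periodic and $L_1$ parts, use completeness of each piece, and verify submultiplicativity by expanding and bounding the four cross terms. You are in fact more careful than the paper, which tacitly assumes the uniqueness of the representation $(\{f_u\},\phi_f)$ rather than proving it via the Riemann--Lebesgue plus Bohr-mean argument you outline.
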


\begin{proof}
It is clear that $\mathcal B(\mathbb S\mathbb R,\mathbb H)$ is a real algebra, and that $\mathcal APW(\mathbb S\mathbb R,\mathbb H)$ and $\mathcal W(\mathbb S\mathbb R,\mathbb H)$ are subalgebras. Now let $(F_n)_{n\in\mathbb N}$ be a Cauchy sequence in $\mathcal B(\mathbb S\mathbb R,\mathbb H)$. Writing 
$$
F_n(it)=\sum_{u\in\mathbb R}e^{itu}f_{n,u}+\int_{\mathbb R}e^{itu}\phi_{f,n}(u)\, du,
$$ 
we see that 
$$
(\sum_{u\in\mathbb R}e^{itu}f_{n,u})_{n\in\mathbb N} \mbox{ and } (\int_{\mathbb R}e^{itu}\phi_{f,n}(u)\, du)_{n\in\mathbb N}
$$ 
are Cauchy sequences in $\mathcal APW(\mathbb S\mathbb R,\mathbb H)$ and $\mathcal W(\mathbb S\mathbb R,\mathbb H)$, respectively. Since it is known that $\mathcal W(\mathbb S\mathbb R,\mathbb H)$ is a Banach algebra (hence a closed subalgebra), we only need to show that the same holds for $\mathcal APW(\mathbb S\mathbb R,\mathbb H)$. The set $$\{u\in\mathbb R:\exists n\in\mathbb N . f_{n,u}\neq 0\}$$ is countable, so we may enumerate it as $(w_m)_{m\in\mathbb N}$. For every $m\in\mathbb N$ we have that $(f_{n,w_m})_{n\in\mathbb N}$ is a Cauchy sequence of quaternions. Thus $\lim_{n\to\infty}f_{n,w_m}$ exists and we denote it by $f_{w_m}$. It is not hard to see that $\sum_{m\in\mathbb N}e^{itw_m}f_{w_m}$ is the norm limit of $\sum_{u\in\mathbb R}e^{itu}f_{n,u}$ as $n\to\infty$ (much like the proof that $l_1$ is a Banach space). Finally, $\mathcal B(\mathbb S\mathbb R,\mathbb H)$ is a Banach algebra:
\begin{equation*}
\begin{split}
\|F\star G\|&=\sum_{u\in\mathbb R}|\sum_{v\in\mathbb R}f_vg_{u-v}|+\int_{\mathbb R}|\sum_{v\in\mathbb R}f_v\phi_g(u-v)\\
&+\sum_{v\in\mathbb R}\phi_f(u-v)g_v+(\phi_f\circ \phi_g)(u)|du+\leq\sum_{u\in\mathbb R}\sum_{v\in\mathbb R}|f_v||g_{u-v}|\\
&+\int_{\mathbb R}\sum_{v\in\mathbb R}|f_v||\phi_g(u-v)|+\sum_{v\in\mathbb R}|\phi_f(u-v)||g_v|+(|\phi_f|\circ |\phi_g|)(u)du\\
&=\|F\|\|G\|.
\end{split}
\end{equation*}
\end{proof}

In \cite{acks}, a map $\omega$ was introduced as a way of viewing the quaternionic Wiener algebras as complex matrix-valued Wiener algebras. Using the same symbol, we analogously define a map $\omega=\omega_{i,j}$ depending on the choice of an imaginary unit $i\in\mathbb S$ and of a $j\in\mathbb S$
orthogonal to $i$, that is, satisfying $ij=-ji$.

\begin{e-definition}
Let $F(it)=\sum_{u\in\mathbb R}e^{itu}f_u+\int_{\mathbb R}e^{itu}\phi_f(u)\, du$. Then
\[
\omega_{i,j}(F)(t)=\sum_{u\in\mathbb R}e^{itu}\chi(f_u)+\int_{\mathbb R}e^{itu}\chi(\phi_f(u)) du,
\]
where $\chi=\chi_{i,j}$ is defined as in the introduction.
\end{e-definition}
We will write $\omega=\omega_{i,j}$ if the context is clear. It is immediate that the entries of $\omega(F)(t)$ belong to $\mathcal B(\mathbb R,\mathbb C_i)$, and moreover, $\omega$ maps $\mathcal B(\mathbb S\mathbb R,\mathbb H)$ injectively into $\mathcal B^{2\times 2}(\mathbb R,\mathbb C_i)$ with values in $\mathbb C_i^{2\times 2}$. Likewise, it maps $\mathcal APW(\mathbb S\mathbb R,\mathbb H)$ injectively into $\mathcal APW^{2\times 2}(\mathbb R,\mathbb C_i)$. It can also be verified (using the properties of $\chi$) that a function $G\in\mathcal B^{2\times 2}(\mathbb R,\mathbb C_i)$ (or a subalgebra) is in the image under $\omega$ (or of the respective subalgebra) if and only if $J_n\overline{G(-t)}J_n^T=G(t)$.

As explained in \cite{acks}, the reason for using $\omega$ is that $\chi$ generally does not respect $\star$-multiplication (only pointwise multiplication), while $\omega$ does. The proof barely changes under our extension, but we include it for the sake of completeness.

\begin{lemma}\label{lemmastar}
Let $F,G\in\mathcal B(\mathbb S\mathbb R, \mathbb H)$. Then
$$
\omega(F\star G)(t)=\omega(F)(t)\omega(G)(t), \quad t \in \mathbb{R}.
$$
\end{lemma}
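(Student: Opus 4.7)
The plan is to exploit the fact that $\omega$ is a real-linear map obtained by applying $\chi$ coefficientwise, while $\chi$ is additive and multiplicative. Since $\omega$ is linear, I would first decompose $F = F_1 + F_2$ and $G = G_1 + G_2$, where $F_1, G_1 \in \mathcal{APW}(\mathbb{S}\mathbb{R},\mathbb{H})$ are the ``discrete'' parts and $F_2, G_2 \in \mathcal{W}(\mathbb{S}\mathbb{R},\mathbb{H})$ are the ``continuous'' parts. By bilinearity of $\star$ and pointwise matrix multiplication, and by additivity of $\omega$, it suffices to verify the identity separately for each of the four pairings $F_i \star G_j$. This reduces the problem to four independent computations, matching the four terms in the definition of $\star$ on $\mathcal{B}(\mathbb{S}\mathbb{R},\mathbb{H})$.

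For the pairing $F_1 \star G_1$, I would write
\[
\omega(F_1 \star G_1)(t) = \sum_{u\in\mathbb{R}} e^{itu}\, \chi\!\left(\sum_{v\in\mathbb{R}} f_v g_{u-v}\right),
\]
and use additivity and multiplicativity of $\chi$ to move it inside the inner sum as $\sum_v \chi(f_v)\chi(g_{u-v})$. On the other side, formally multiplying the two series defining $\omega(F_1)(t)$ and $\omega(G_1)(t)$ (noting $e^{itu} \in \mathbb{C}_i$ is a scalar and commutes with the $2\times 2$ matrices $\chi(f_v)$) and collecting terms by the exponent $u = v + (u-v)$, I obtain exactly the same expression. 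The sum--integral terms $F_1\star G_2$ and $F_2\star G_1$ are handled analogously, only replacing one of the sums by an integral and using that $\chi$ commutes with integration of $L_1$ functions (via dominated convergence applied componentwise). The integral--integral term $F_2 \star G_2$ reduces to the well-known fact that the Fourier-image of a convolution is a pointwise product, combined with $\chi(\phi_f \circ \phi_g)(u) = (\chi(\phi_f) \circ \chi(\phi_g))(u)$, which again follows from additivity and multiplicativity of $\chi$.

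The only genuine technical point is justifying the interchange of the order of summation/integration needed to identify the ``Cauchy product'' of the series/integrals with the convolution appearing in the definition of $\star$. This is standard Fubini/Tonelli, which applies because $\|F\|$ and $\|G\|$ being finite gives absolute convergence throughout: $\sum_{u,v} |f_v||g_{u-v}| \leq \|F\|\|G\| < \infty$, and similarly for the mixed terms, so all rearrangements are legitimate. This is essentially the same absolute-convergence bookkeeping already performed in the proof of Proposition~2.3 above, and indeed the computation is a refinement of that submultiplicativity estimate with $|\cdot|$ replaced by $\chi(\cdot)$.

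I expect no major obstacle beyond this bookkeeping; the heart of the argument is purely algebraic, resting on the two properties $\chi(ab) = \chi(a)\chi(b)$ and $\chi(a+b) = \chi(a)+\chi(b)$, together with the fact that the exponentials $e^{itu}$ live in the commutative subfield $\mathbb{C}_i$ and therefore commute with the image matrices $\chi(f_u), \chi(\phi_f(u))$. In this respect the proof is exactly parallel to the one given in \cite{acks} for the discrete and continuous quaternionic Wiener algebras, with the extension consisting only in handling the cross terms that arise from the almost-periodic plus $L_1$ decomposition.
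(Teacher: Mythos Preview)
Your proposal is correct and follows essentially the same approach as the paper: apply $\chi$ coefficientwise to the four terms of the $\star$-product, use additivity and multiplicativity of $\chi$ to split each term, and recognize the result as the pointwise product $\omega(F)(t)\omega(G)(t)$. The paper's proof is in fact terser than yours, omitting the explicit Fubini/Tonelli justification and the bilinearity reduction you spell out, but the underlying computation is identical.
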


\begin{proof}
Using the properties of $\chi$, we get
\begin{equation*}
\begin{split}
\omega(F\star G)(t)&=\sum_{u\in\mathbb R}e^{itu}\sum_{v\in\mathbb R}\chi(f_vg_{u-v})+\int_{\mathbb R}e^{itu}du\sum_{v\in\mathbb R}\chi(f_v\phi_g(u-v))\\
&+\int_{\mathbb R}e^{itu}du\sum_{v\in\mathbb R}\chi(\phi_f(u-v)g_v)+\int_{\mathbb R}e^{itu} \chi(\phi_f\circ \phi_g)(u)\, du\\
&=\sum_{u\in\mathbb R}e^{itu}\sum_{v\in\mathbb R}\chi(f_v)\chi(g_{u-v})+\int_{\mathbb R}e^{itu}du\sum_{v\in\mathbb R}\chi(f_v)\chi(\phi_g(u-v))\\
&+\int_{\mathbb R}e^{itu}du\sum_{v\in\mathbb R}\chi(\phi_f(u-v))\chi(g_v)+\int_{\mathbb R}e^{itu} (\chi(\phi_f)\circ \chi(\phi_g))(u)\, du\\
&=\omega(F)(t)\omega(G)(t).
\end{split}
\end{equation*}
\end{proof}

The following theorem is inspired by the theorems proved in \cite{acks} for the discrete and continuous cases. The main difference that needs to be accounted for is the fact that almost periodic functions cannot be continuously extended to $\infty$, so the domain being considered is not compact.

\begin{theorem}
Let $F\in\mathcal B(\mathbb S\mathbb R,\mathbb H)$. The following are equivalent{\rm :}
\begin{enumerate}
\item[(i)] The function $F$ is invertible in $\mathcal B(\mathbb S\mathbb R,\mathbb H)$.
\item[(ii)] There exist orthogonal $i,j\in\mathbb S$ such that $$\inf_{t\in\mathbb R}  |\det \omega_{i,j}(F)(t)|>0.$$
\item[(iii)] For any orthogonal $i,j\in\mathbb S,$ $$\inf_{t\in\mathbb R}  |\det \omega_{i,j}(F)(t)|>0.$$
\item[(iv)] $$\inf_{i\perp j\in\mathbb S, t\in\mathbb R}  |\det \omega_{i,j}(F)(t)|>0.$$
\item[(v)] $$\inf_{p\in\mathbb S\mathbb R}  |F(p)|>0.$$
\item[(vi)] For any $i\in\mathbb S,$ $$\inf_{t\in\mathbb R}  |F(it)|>0.$$
\end{enumerate}
\end{theorem}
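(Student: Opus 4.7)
My strategy is to leverage the algebra monomorphism $\omega = \omega_{i,j}$ (Lemma \ref{lemmastar}) to reduce the problem to the classical complex matrix-valued Wiener algebra $\mathcal B^{2\times 2}(\mathbb R, \mathbb C_i)$, where the matrix Wiener--L\'evy theorem is available. The image of $\omega$ is characterized by $G(t) = J\overline{G(-t)}J^T$; using $JJ^T = I$, a short computation shows this symmetry is preserved under pointwise inversion. Hence $\star$-invertibility of $F$ is equivalent to pointwise invertibility of $\omega(F)$ in $\mathcal B^{2\times 2}(\mathbb R, \mathbb C_i)$, which by the classical theorem is equivalent to $\inf_t |\det \omega(F)(t)| > 0$. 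Since $(i,j)$ was arbitrary, this yields at once (i) $\Leftrightarrow$ (ii) $\Leftrightarrow$ (iii).

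For the uniform version (iv), I exploit the identity $\|\chi_{i,j}(q)\| = |q|$ for quaternions $q$ (coming from $\chi(q)^*\chi(q) = |q|^2 I$). If $G$ is the $\star$-inverse of $F$, then $\|\omega_{i,j}(G)(t)\| \leq \|G\|$ uniformly in $(i, j, t)$, so the elementary bound $|\det M^{-1}| \leq \|M^{-1}\|^2$ for $2\times 2$ matrices gives $|\det \omega(F)(t)| \geq \|G\|^{-2}$ uniformly, proving (iv); the reverse (iv) $\Rightarrow$ (iii) is trivial. Cauchy--Schwarz applied to the Hermitian pairing $\det \omega(F)(t) = A(t)\overline{A(-t)} + B(t)\overline{B(-t)}$ then yields $|\det \omega(F)(t)| \leq |F(it)|\cdot |F(-it)| \leq \|F\| \cdot |F(it)|$, giving the implication (iv) $\Rightarrow$ (v) with $|F(it)| \geq c/\|F\|$ uniformly.

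For (v) $\Leftrightarrow$ (vi) the forward direction is trivial by restriction. For the reverse, a Lipschitz-in-$i$ estimate delivers uniformity: since $|e^{itu} - e^{i'tu}| = |(i-i')\sin(tu)| \leq |i - i'|$ for all $t, u$, one gets
\[
\sup_{t \in \mathbb R}|F(it) - F(i't)| \leq |i - i'|\cdot \|F\|.
\]
Therefore $g(i) := \inf_t |F(it)|$ is $\|F\|$-Lipschitz on the compact sphere $\mathbb S$, and pointwise positivity of $g$ (hypothesis (vi)) combined with compactness forces $\min_{\mathbb S} g > 0$, which is (v).

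To close the cycle it suffices to prove (v) $\Rightarrow$ (ii), and this is the step I expect to be the main obstacle: uniform positivity of $|F|$ controls only the row norms of $\omega(F)(t)$, whereas (ii) concerns its determinant, which could in principle vanish while the rows remain bounded below in norm. My plan is to adapt the Lipschitz/compactness machinery above to the scalar function $\det \omega_{i,j}(F)(t)$, which is invariant under $j \mapsto e^{i\phi}j$ (via the diagonal conjugation $\chi_{i,j'} = D\chi_{i,j}D^{-1}$) and therefore depends only on $(i, t)$. Establishing a Lipschitz bound in $i$ uniform in $t$ together with a careful treatment of the behaviour of $\det \omega(F)(t)$ as $|t| \to \infty$ --- through the decomposition $F = F_{AP} + F_W$, combining Riemann--Lebesgue decay of the continuous piece with a Bohr-compactification continuous extension of the almost periodic piece --- should transfer the pointwise positivity inherited from (v) to the uniform positivity required by (ii), completing the cycle.
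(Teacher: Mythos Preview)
Your plan for (i)$\Leftrightarrow$(ii)$\Leftrightarrow$(iii)$\Leftrightarrow$(iv), for (iv)$\Rightarrow$(v), and for (v)$\Leftrightarrow$(vi) is sound, and in places cleaner than the paper's. Your Cauchy--Schwarz bound $|\det\omega(F)(t)|\le |F(it)|\,|F(-it)|$ gives (iv)$\Rightarrow$(v) more directly than the paper, which invokes two auxiliary lemmas for that step; and your operator-norm estimate yields (i)$\Rightarrow$(iv) with the sharper constant $\|F^{-\star}\|^{-2}$ in place of the paper's $\tfrac12\|F^{-\star}\|^{-2}$.

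The genuine gap is (v)$\Rightarrow$(ii). Your Cauchy--Schwarz inequality is an \emph{upper} bound for $|\det\omega(F)(t)|$, so it cannot supply the \emph{lower} bound you need here. You say the Lipschitz/Bohr-compactification machinery should ``transfer the pointwise positivity inherited from (v)'', but (v) gives positivity of $|F(it)|$, not of $|\det\omega(F)(t)|$; nothing in your outline explains why $\inf|F|>0$ forces $\det\omega(F)(t)\neq0$ even at a single point. Note that $\omega(F)(t)\neq\chi(F(it))$ in general (the exponentials $e^{itu}$ do not commute through $\chi$), so nonvanishing of $F(it)$ does not directly give invertibility of $\omega(F)(t)$. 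And even granting pointwise nonvanishing, for an almost periodic scalar function $g$ the condition $g(t)\neq0$ for all $t\in\mathbb R$ does \emph{not} imply $\inf_t|g(t)|>0$; the closure of the range can contain $0$, so the compactification step as sketched would not close the argument.

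The paper supplies the missing link through a multiplicative identity you do not use: it proves $\det\omega_{i,j}(F)(t)=(F\star F^c)(it)$ (Lemma~2.7) and a pointwise evaluation formula $(F\star G)(it)=F(it)\,G(i_t t)$ for a suitable $i_t\in\mathbb S$ (Lemma~2.8). Combined with the observation $F^c(\mathbb S\mathbb R)=\overline{F(\mathbb S\mathbb R)}$, this gives directly
\[
|\det\omega_{i,j}(F)(t)|\ \ge\ \Bigl(\inf_{p\in\mathbb S\mathbb R}|F(p)|\Bigr)^{2},
\]
uniformly in $i,j,t$. That product formula is exactly what converts the pointwise lower bound on $|F|$ into a uniform lower bound on the determinant; without an analogue of it, your (v)$\Rightarrow$(ii) strategy does not go through.
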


For the proof, we need two lemmas, which are analogous to formulas proved in \cite{acks} (for the discrete and continuous cases).

\begin{lemma}\label{lemmastar}
Let $
F\in\mathcal B(\mathbb S\mathbb R, \mathbb H)$ 
be given by 
$$
F(it)=\sum_{u\in\mathbb R}e^{itu}f_u+\int_{\mathbb R}e^{itu}\phi_f(u)\, du.
$$ 
Then
$$
(F\star F^c)(it)=\det(\omega(F)(t)),
$$
where $F^c(it)=\sum_{u\in\mathbb R}e^{itu}\overline {f_u}+\int_{\mathbb R}e^{itu}\overline {\phi_f(u)}\, du$.
\end{lemma}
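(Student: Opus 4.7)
The plan is to use the embedding $\omega$ to convert the quaternionic $\star$-product into ordinary matrix multiplication, and to exploit the fact that, for $2 \times 2$ matrices, the classical adjugate is a linear function of the entries.

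First I would verify the coefficient-wise identity $\chi(\bar p) = \adj(\chi(p))$ for every quaternion $p$. Writing $p = a + bj$ with $a, b \in \mathbb C_i$, one has $\bar p = \bar a - bj$, and a direct computation shows that both $\chi(\bar p)$ and $\adj(\chi(p))$ equal $\bigl(\begin{smallmatrix} \bar a & -b \\ \bar b & a \end{smallmatrix}\bigr)$.

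Since the adjugate of a $2 \times 2$ matrix is linear in the entries, the operator $\adj$ commutes with the absolutely convergent sums and Bochner integrals appearing in the definition of $\omega$. Applying this term by term to
\[
\omega(F^c)(t) = \sum_{u\in\mathbb R} e^{itu} \chi(\overline{f_u}) + \int_{\mathbb R} e^{itu} \chi(\overline{\phi_f(u)})\, du,
\]
together with the first step, yields $\omega(F^c)(t) = \adj(\omega(F)(t))$. Combining this with the multiplicativity of $\omega$ established in the previous lemma gives
\[
\omega(F \star F^c)(t) = \omega(F)(t)\,\omega(F^c)(t) = \omega(F)(t)\,\adj(\omega(F)(t)) = \det(\omega(F)(t))\,I_2.
\]
Finally, writing $F \star F^c = C + Dj$ with $C, D \in \mathcal B(\mathbb R, \mathbb C_i)$, the top row of $\omega(F \star F^c)(t)$ is $(C(t), D(t))$ by the very definition of $\omega$. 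Comparing with the scalar matrix $\det(\omega(F)(t))\,I_2$ forces $D \equiv 0$ and $(F \star F^c)(it) = C(t) = \det(\omega(F)(t))$, as desired.

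The one conceptual point is the observation that in the $2 \times 2$ case $\adj$ is linear, which is exactly what allows the interchange with the defining series and integral of $\omega$; once this is noted, everything else is formal. The resulting identity is manifestly independent of the choice of orthogonal $i, j \in \mathbb S$ used to define $\omega$, as it must be since the left-hand side is intrinsic to $F$.
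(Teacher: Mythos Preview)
Your proof is correct and takes a genuinely different, more structural route than the paper's. The paper proceeds by brute force: it writes $f_u=a_u+b_uj$, $\phi_f(u)=k(u)+l(u)j$, expands the $\star$-product $(F\star F^c)(it)$ term by term, observes by direct inspection that the $j$-component cancels, and then separately writes out the $2\times 2$ matrix $\omega(F)(t)$ and multiplies its entries to verify that the determinant agrees with the surviving real part. Your argument instead isolates the single algebraic identity $\chi(\overline{p})=\adj(\chi(p))$, pushes it through the defining series and integral using the linearity of the $2\times 2$ adjugate, and then invokes the already-established multiplicativity of $\omega$ together with $M\,\adj(M)=\det(M)I_2$. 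This replaces the paper's lengthy coefficient chase with two lines and makes transparent \emph{why} the identity holds: $F^c$ is precisely the element whose $\omega$-image is the classical adjugate of $\omega(F)$. The paper's approach has the mild advantage of being self-contained (it does not appeal to Lemma~2.5), but your version is shorter, explains the phenomenon, and would generalize more readily if one wanted an analogous statement in other settings where $\chi$ intertwines conjugation with the adjugate.
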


For the proof, we mimic the calculation in \cite{acks}, but it is longer due to there being two components.

\begin{proof}
Writing $f_u=a_u+b_uj$, $\phi_f(u)=k(u)+l(u)j$ where $a_u,b_u,k(u),l(u)$ are $\mathbb C_i$-valued, we get
\begin{equation*}
\begin{split}
(F\star F^c)(it)&=\sum_{u\in\mathbb R}e^{itu}\sum_{v\in\mathbb R}f_v\overline{f_{u-v}}+\int_{\mathbb R}e^{itu}du\sum_{v\in\mathbb R}f_v\overline{\phi_f(u-v)} \\
&+\int_{\mathbb R}e^{itu}du\sum_{v\in\mathbb R}\phi_f(u-v)\overline{f_v}+\int_{\mathbb R}e^{itu} (\phi_f\circ \overline{\phi_f})(u)\, du \\
&=\sum_{u\in\mathbb R}e^{itu}[\sum_{v\in\mathbb R}(a_v\overline{a_{u-v}}+b_v\overline{b_{u-v}})+\sum_{v\in\mathbb R}(-a_vb_{u-v}+b_va_{u-v})j]\\
&+2\int_{\mathbb R}e^{itu}du[\sum_{v\in\mathbb R}\Rea(a_v\overline{k(u-v)}+b_v\overline{l(u-v)})\\
&+\sum_{v\in\mathbb R}\Rea(-a_v\phi_f(u-v)+\phi_f(v)a_{u-v})]+\int_{\mathbb R}e^{itu} [(k\circ \overline{k}+l\circ \overline{l})(u)\\
&+(-k\circ l+l\circ k)(u)j]\, du\\
&=\sum_{u\in\mathbb R}e^{itu}\sum_{v\in\mathbb R}(a_v\overline{a_{u-v}}+b_v\overline{b_{u-v}})+2\int_{\mathbb R}e^{itu}du\sum_{v\in\mathbb R}\Rea(a_v\overline{k(u-v)}\\
&+b_v\overline{l(u-v)})+\int_{\mathbb R}e^{itu} (k\circ \overline{k}+l\circ \overline{l})(u)\, du.\\
\end{split}
\end{equation*}
Note that
$$
\omega(F)(t)=\left[\begin{matrix}
\sum_{u\in\mathbb R}e^{itu}a_u+\int_{\mathbb R}e^{itu}k(u)\, du &\sum_{u\in\mathbb R}e^{itu}b_u+\int_{\mathbb R}e^{itu}l(u)\, du\\
-\sum_{u\in\mathbb R}e^{itu}\overline{b_u}-\int_{\mathbb R}e^{itu}\overline{l(u)}\, du &\sum_{u\in\mathbb R}e^{itu}\overline{a_u}+\int_{\mathbb R}e^{itu}\overline{k(u)}\, du
\end{matrix}\right],
$$
and by carrying out the multiplication we see that $(F\star F^c)(it)=\det(\omega(F)(t))$.
\end{proof}

\begin{lemma}\label{lemmastar}
Let $F,G\in\mathcal B(\mathbb S\mathbb R, \mathbb H)$. Then
$$
\forall i \in \mathbb{S} \quad \forall t \in \mathbb{R} \quad \exists i_t\in \mathbb{S} \quad (F\star G)(it)=F(it)G(i_tt).
$$
\end{lemma}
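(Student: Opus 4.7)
The plan is to exploit the fact that for any nonzero quaternion $q$ and any $i \in \mathbb{S}$, the conjugate $q^{-1} i q$ is again an element of $\mathbb{S}$ (it is pure imaginary since $(q^{-1}iq)^2 = q^{-1}i^2 q = -1$, and has modulus $1$). This lets us define $i_t := F(it)^{-1} i F(it)$ whenever $F(it) \neq 0$. (If $F(it)=0$, any $i_t \in \mathbb{S}$ will do, since the right-hand side is then automatically $0$; the content of the lemma is the nonvanishing case.)

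The first step is to establish the bookkeeping identity
\begin{equation*}
(F\star G)(it) \;=\; \sum_{u\in\mathbb R} e^{itu}\, F(it)\, g_u \;+\; \int_{\mathbb R} e^{itu}\, F(it)\, \phi_g(u)\, du. \qquad (\ast)
\end{equation*}
To prove $(\ast)$, one substitutes $F(it) = \sum_v e^{itv} f_v + \int e^{its} \phi_f(s)\, ds$ into the right-hand side, uses that $e^{itu}$ and $e^{itv}$ lie in the commutative subfield $\mathbb{C}_i$ so that $e^{itu}e^{itv} = e^{it(u+v)}$, and then performs the changes of variables $w=u+v$ (for the pure-series term), $r=u+s$ (for the two mixed terms), and $r=u+s$ (for the convolution term). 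One recovers, term by term, the four-piece definition of $F\star G$ in Definition~2.2. Absolute convergence, guaranteed by finiteness of $\|F\|$ and $\|G\|$, legitimizes every interchange of summation and integration.

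The second step is the identity $F(it)G(i_tt) = (F\star G)(it)$ in the case $F(it)\neq 0$. Setting $q := F(it)$ and $i_t := q^{-1}iq \in \mathbb{S}$, one has for every real $s$
\begin{equation*}
e^{i_t s} \;=\; \cos s + i_t\sin s \;=\; q^{-1}(\cos s + i\sin s)q \;=\; q^{-1} e^{is} q.
\end{equation*}
Applying this to each exponential appearing in the defining expression of $G(i_t t)$ gives
\begin{equation*}
G(i_t t) \;=\; \sum_u q^{-1} e^{itu} q\, g_u + \int q^{-1} e^{its} q\, \phi_g(s)\, ds.
\end{equation*}
Left-multiplying by $q = F(it)$ telescopes the $q q^{-1}$ factors and reproduces precisely the right-hand side of $(\ast)$, which is $(F \star G)(it)$.

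The only potentially delicate point is the combinatorial bookkeeping of $(\ast)$, since $\mathcal{B}(\mathbb S\mathbb R,\mathbb H)$ involves both a discrete almost-periodic part and a continuous part, producing four different types of terms in the product. However, absolute convergence makes all manipulations routine, and the geometric observation that $q^{-1}iq \in \mathbb S$ together with its consequence $e^{i_t s} = q^{-1}e^{is}q$ is what makes the whole identity collapse to a single line.
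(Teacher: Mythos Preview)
Your proof is correct and, for the main case $F(it)\neq 0$, essentially identical to the paper's: both set $i_t = F(it)^{-1}iF(it)$ and use $e^{i_t s} = F(it)^{-1}e^{is}F(it)$ to pull $F(it)$ through the exponentials in $G$. The paper's last displayed line in that case is exactly your identity $(\ast)$, just not stated separately.

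The one genuine difference is the degenerate case $F(it)=0$. Because you isolated $(\ast)$ as a standalone identity (valid regardless of whether $F(it)$ vanishes), it immediately gives $(F\star G)(it)=0$, and the lemma is trivial. The paper instead perturbs to $F_\epsilon = F+\epsilon$, applies the nonvanishing case to $F_\epsilon$, and lets $\epsilon\to 0$ using boundedness of $G$. Your route is shorter and avoids the limiting argument; the paper's $\epsilon$-trick is unnecessary once $(\ast)$ is made explicit.
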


\begin{proof}
If $F(it)\neq 0$, then let $i_t=F(it)^{-1}iF(it)$. Then
\begin{equation*}
\begin{split}
F(it)G(i_tt)&=F(it)(\sum_{u\in\mathbb R}e^{i_ttu}g_u+\int_{\mathbb R}e^{i_ttu}\phi_g(u)\, du)\\
&=F(it)(\sum_{u\in\mathbb R}(\cos(tu)+F(it)^{-1}iF(it)\sin(tu))g_u\\
&+\int_{\mathbb R}(\cos(tu)+F(it)^{-1}iF(it)\sin(tu))\phi_g(u)du)\\
&=\sum_{u\in\mathbb R}e^{itu}F(it)g_u+\int_{\mathbb R}e^{itu}F(it)\phi_g(u)\, du\\
&=(F\star G)(it).
\end{split}
\end{equation*}
If $F(it)=0$, let $\epsilon>0$ and define $F_{\epsilon}=F+\epsilon$. Then
$$
(F\star G)(it)=(F_{\epsilon}\star G)(it)-\epsilon G(it)=F_{\epsilon}(it)G(i_tt)-\epsilon G(it)=\epsilon(G(i_tt)-G(it)).
$$
Since $G$ is bounded and $\epsilon$ is arbitrary, we get $(F\star G)(it)=0$. In particular, $(F\star G)(it)=F(it)G(jt)$ for any $j\in \mathbb S$.
\end{proof}

Now let us prove Theorem 2.6. We follow the outline of the proof in \cite{acks} for the discrete case, modifying the arguments so as to deal with infimum values.

\begin{proof}
$(i) \implies (iv)$:
Denoting the inverse by $F^{-\star}$, we have by the multiplicativity of $\omega$ 
\begin{equation}
|\det \omega_{i,j}(F)(t)||\det \omega_{i,j}(F^{-\star})(t)|=1.
\end{equation}
Now, in general, for any $G\in\mathcal B(\mathbb S\mathbb R,\mathbb H)$ the matrix $\omega_{i,j}(G)(t)$ is of the form
\begin{equation}
\left[\begin{matrix} r(it) & s(it)\\
-\overline{s(-it)}& \overline{r(-it)}\end{matrix}\right]
\end{equation}
and thus
\begin{equation}
\begin{split}
\sup_{i,j,t} |\det \omega_{i,j}(G)(t)|&=\sup_{i,j,t} |\det\left[\begin{matrix} r(it) & s(it)\\
-\overline{s(-it)}& \overline{r(-it)}\end{matrix}\right]| \\
&\leq \sup_{i,j,t} |r(it)| \sup_{i,j,t} |r(-it)|+\sup_{i,j,t} |s(it)| \sup_{i,j,t} |s(-it)| \\
&\leq 2(\sup_{i,t}|G(it)|)^2\leq 2||G||^2,
\end{split}
\end{equation}
where we have used the fact that $|G(it)|^2=|r(it)|^2+|s(it)|^2$. In particular, $\sup_{i,j,t} |\det \omega_{i,j}(F^{-\star})(t)|\leq 2||F^{-\star}||^2$ and thus (2.4) yields
\begin{equation}
\inf_{i,j,t}|\det \omega_{i,j}(F)(t)|\geq\frac{1}{2||F^{-\star}||^2}.
\end{equation}
$(iv) \implies (iii) \implies (ii)$: Trivial. \\
$(ii) \implies (i)$: Let us assume that $$\inf_{t\in\mathbb R} |\det \omega_{i,j}(F)(t)|>0$$ for some orthogonal $i,j\in\mathbb S$. As a scalar complex function in the algebra $\mathcal B(\mathbb R,\mathbb C)$, $d(it):=\det \omega_{i,j}(F)(t)$ fulfills the condition for invertibility. Thus if we write
\begin{equation*}
\omega_{i,j}(F)(t)=\left[\begin{matrix} a(it) & b(it)\\
-\overline{b(-it)}& \overline{a(-it)}\end{matrix}\right], \\
\end{equation*}
then using the formula for inverting a $2 \times 2$ matrix we see that
\begin{equation*}
{\omega_{{i,j}}(F)}^{-1}(t)=\frac{1}{d(it)}\left[\begin{matrix} \overline{a(-it)} & -b(it)\\
\overline{b(-it)} & a(it)\end{matrix}\right] \\
\end{equation*}
is a matrix function in the algebra $\mathcal B(\mathbb R,\mathbb C^{2 \times 2})$. Let 
$$
H(it):=\frac{1}{d(it)}(\overline{a(-it)} -b(it) j).
$$ 
If $d(it)=\overline{d(-it)}$, then $\omega_{i,j}(H)(t)=\omega_{i,j}(F)^{-1}(t)$, which implies $H=F^{-\star}$. Indeed:
$$
d(it)=a(it)\overline{a(-it)}+b(it)\overline{b(-it)}=\overline{d(-it)}.
$$
$(v) \implies (iv)$: By Lemma 2.7, 
$$\det\omega_{i,j}(F)(t)=(F\star F^c)(it) \quad \forall i\perp j\in\mathbb S, t\in \mathbb R.$$ Using Lemma 2.8, we get
\begin{equation}
\begin{split}
\inf_{i,j,t}|\det\omega_{i,j}(F)(t)|&=\inf_{i,t}|(F\star F^c)(it)|=\inf_{i,t}|F(it)||F^c(i_tt)|\\
&\geq \inf_{p\in\mathbb S\mathbb R}|F(p)|\inf_{p\in\mathbb S\mathbb R}|F^c(p)|.
\end{split}
\end{equation}
Note that $F^c(\mathbb S\mathbb R)=\overline{F(\mathbb S\mathbb R)}$. To see this, write $F(it)=H(t)+iK(t)$, where
$$
H(t)=\sum_{u\in\mathbb R}\cos(tu)f_u+\int_{\mathbb R}\cos(tu)\phi_f(u)\, du,
$$ 
$$
K(t)=\sum_{u\in\mathbb R}\sin(tu)f_u+\int_{\mathbb R}\sin(tu)\phi_f(u)\, du.
$$ 
Then $F^c(it)=\overline{H(t)}+i\overline{K(t)}$. If $K(t)=0$, then $F^c(it)=\overline{F(it)}$. Otherwise, then for $\tilde i=-\overline{K(t)}i\overline{K(t)}^{-1}$ we get 
$$
F^c(\tilde it)=\overline{F(it)}=\overline{H(t)}-\overline{K(t)}i\overline{K(t)}^{-1}\overline{K(t)}=\overline{H(t)}-\overline{K(t)}i=\overline{F(it)}.
$$
This shows that $\overline{F(\mathbb S\mathbb R)}\subseteq F^c(\mathbb S\mathbb R)$, and the reverse follows by substituting $F^c$ instead of $F$ using the fact that $(F^c)^c=F$. So in particular, 
$$
\inf_{p\in\mathbb S\mathbb R}|F^c(p)|=\inf_{p\in\mathbb S\mathbb R}|F(p)|. 
$$
Going back to (2.8), we get
$$
\inf_{i,j,t}|\det\omega_{i,j}(F)(t)|\geq (\inf_{p\in\mathbb S\mathbb R}|F(p)|)^2>0.
$$
$(iv) \implies (v)$: 
$$
0<\inf_{i,j,t}|\det\omega_{i,j}(F)(t)|=\inf_{i,t}|F(it)||F^c(i_tt)|\leq (\inf_{p\in\mathbb S\mathbb R}|F(p)|)(\sup_{p\in\mathbb S\mathbb R}|F^c(p)|).
$$
Since $F^c$ is bounded, we can divide by $\sup_{p\in\mathbb S\mathbb R}|F^c(p)|$ and get
$$
0< \inf_{p\in\mathbb S\mathbb R}|F(p)|.
$$
$(v) \implies (vi)$: Trivial. \\
$(vi) \implies (v)$: Assume by negation that $\inf_{p\in\mathbb S\mathbb R}|F(p)|=0$. Then there exist $(i_n)_{n\in\mathbb N}\subset\mathbb S$, $(t_n)_{n\in\mathbb N}\subset\mathbb R$ such that  $\lim_{n\to\infty}F(i_nt_n)=0$. Since $\mathbb S$ is compact, we can assume without loss of generality that $(i_n)_{n\in\mathbb N}$ converges to $i_0\in\mathbb S$. Observe that for all $i,j\in\mathbb S$ and $t\in\mathbb R$ we have
$$
|F(it)-F(jt)|=|\sum_{u\in\mathbb R}(i-j)\sin(tu)f_u+\int_{\mathbb R}(i-j)\sin(tu)\phi_f(u)\, du|\leq\|F\||i-j|.
$$
This implies that $\lim_{n\to\infty}F(i_0t_n)=0$, which contradicts $\inf_{t\in\mathbb R}|F(i_0t)|>0$.
\end{proof}

\begin{remark}
As is shown in \cite{css} for slice hyperholomorphic functions, it is easy to derive the formula 
$$
F^{-\star}=\dfrac{F^c}{F\star F^c}.
$$ 
Not only is $F\star F^c$ invertible if and only if $F$ is, but also all of its coefficients (in the almost periodic and continuous components) are real. Thus, $\star$-inversion is the same as pointwise inversion.
\end{remark}

\begin{corollary}
Invertibility in $\mathcal APW(\mathbb S\mathbb R,\mathbb H)$  is equivalent to conditions (ii)-(vi) for functions in the aforementioned subalgebra.
\end{corollary}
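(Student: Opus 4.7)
One direction is purely formal: if $F$ is $\star$-invertible in $\mathcal{APW}(\mathbb{S}\mathbb{R},\mathbb{H})$, then, since this is a subalgebra of $\mathcal{B}(\mathbb{S}\mathbb{R},\mathbb{H})$ carrying the same $\star$-multiplication, $F$ is a fortiori invertible in $\mathcal{B}(\mathbb{S}\mathbb{R},\mathbb{H})$, and Theorem~2.6 delivers each of (ii)--(vi).

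For the converse I would start with an $F\in\mathcal{APW}(\mathbb{S}\mathbb{R},\mathbb{H})$ satisfying any one of (ii)--(vi). Theorem~2.6 produces the $\star$-inverse $F^{-\star}\in\mathcal{B}(\mathbb{S}\mathbb{R},\mathbb{H})$; all that remains is to show that the continuous ($\phi$) component of $F^{-\star}$ vanishes, i.e.\ that $F^{-\star}\in\mathcal{APW}(\mathbb{S}\mathbb{R},\mathbb{H})$. My vehicle is the explicit formula
\[
F^{-\star}=\frac{F^c}{F\star F^c}
\]
from Remark~2.9. Since $F$ has no continuous component, neither does $F^c$, and hence neither does $g:=F\star F^c$. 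Remark~2.9 further tells us that all Fourier coefficients of $g$ are real, so $g$ is a real-valued element of $\mathcal{APW}(\mathbb{S}\mathbb{R},\mathbb{H})$ and, for any chosen imaginary unit $i\in\mathbb{S}$, may be read as an element of the classical complex almost periodic Wiener algebra $\mathcal{APW}(\mathbb{R},\mathbb{C}_i)$; by Lemma~2.7 together with the hypothesis, $\inf_{t\in\mathbb{R}}|g(it)|>0$.

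The crux is then the classical scalar Wiener--L\'evy theorem for $\mathcal{APW}(\mathbb{R},\mathbb{C})$ -- a standard result, whose customary proof is Gelfand-theoretic, the spectrum of that algebra being the Bohr compactification of $\mathbb{R}$. It yields $1/g\in\mathcal{APW}(\mathbb{R},\mathbb{C}_i)$; real-valuedness of $g$ passes to $1/g$, and uniqueness of the Fourier expansion then forces the coefficients of $1/g$ to be real, so $1/g\in\mathcal{APW}(\mathbb{S}\mathbb{R},\mathbb{H})$. Because a scalar function with real Fourier coefficients $\star$-commutes with every element of $\mathcal{APW}(\mathbb{S}\mathbb{R},\mathbb{H})$, the formula from Remark~2.9 rewrites as $F^{-\star}=(1/g)\star F^c$, and the right-hand side is visibly in $\mathcal{APW}(\mathbb{S}\mathbb{R},\mathbb{H})$. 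The main obstacle is precisely this appeal to the classical almost periodic Wiener--L\'evy theorem; once it is granted, everything else reduces to bookkeeping with the real-coefficient property of $F\star F^c$ and the quaternionic structures already developed in the section.
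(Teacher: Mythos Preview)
Your argument is correct, but it takes a different route from the paper's. The paper does not invoke the formula $F^{-\star}=F^c/(F\star F^c)$ or any external Wiener--L\'evy theorem; instead it exploits the direct-sum splitting of $\mathcal B(\mathbb S\mathbb R,\mathbb H)$ into its almost periodic and purely integral parts. Writing $F^{-\star}=G'+G''$ with $G'\in\mathcal{APW}$ and $G''(it)=\int_{\mathbb R}e^{itu}\phi(u)\,du$, one checks from the $\star$-product formula that $F\star G'\in\mathcal{APW}$ and $F\star G''$ is purely integral; comparing components in $1=F\star F^{-\star}$ gives $F\star G'=1$, whence $G'=F^{-\star}$ by uniqueness of inverses. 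This is entirely self-contained, whereas your approach trades that splitting argument for the classical scalar theorem on $\mathcal{APW}(\mathbb R,\mathbb C)$ together with the explicit inverse formula. Both work; the paper's is shorter and avoids the external citation, while yours makes the structure of $F^{-\star}$ more transparent.

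One small point of phrasing: you write that $g=F\star F^c$ is ``real-valued'' and that this passes to $1/g$. The function $g(it)=\sum_u e^{itu}g_u$ is not real-valued for real $g_u$; what you need (and what actually holds) is the symmetry $\overline{g(-t)}=g(t)$, which \emph{is} equivalent to the coefficients being real and which is inherited by $1/g$, forcing its Bohr--Fourier coefficients to be real as well. With that correction the argument goes through.
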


\begin{proof}
We only need to show why $\mathcal APW(\mathbb S\mathbb R,\mathbb H)$ is closed with respect to inversion. Let $F\in\mathcal APW(\mathbb S\mathbb R,\mathbb H)$ be invertible in $\mathcal B(\mathbb S\mathbb R,\mathbb H)$. Writing 
$$
F(it)=\sum_{u\in\mathbb R}e^{itu}f_u,
$$
$$
F^{-\star}(it)=\sum_{u\in\mathbb R}e^{itu}g_u+\int_{\mathbb R}e^{itu}\phi_g(u)du,
$$ 
we get
$$
1=(F\star G)(it)=\sum_{u\in\mathbb R}e^{itu}\sum_{v\in\mathbb R}f_vg_{u-v}+\int_{\mathbb R}e^{itu}du\sum_{v\in\mathbb R}f_v\phi_g(u-v).
$$
This implies that 
$$
1=\sum_{u\in\mathbb R}e^{itu}\sum_{v\in\mathbb R}f_vg_{u-v}.
$$ 
So $G'(it)=\sum_{u\in\mathbb R}e^{itu}g_u$ also satisfies $1=(F\star G')(it)$, and since the inverse is unique, $F^{-\star}=G'\in\mathcal APW(\mathbb S\mathbb R,\mathbb H)$.
\end{proof}

We now extend the previous definitions to the case of quaternionic matrix-valued functions. To clarify, the rest of this section is mostly a rewriting of the preceding results, with the real difference being the proof of Theorem 2.18. \\
First, let $\|\cdot\|_n$ on $\mathbb H^{n\times n}$ denote the operator norm (with respect to the Euclidean norm on $\mathbb H^{n\times 1}$). We will start with the discrete Wiener case:

\begin{e-definition}
We denote by $\mathcal W_{\mathbb H}^{n\times n}$ the set of matrix-valued functions of the form
\begin{equation}\label{effe}
F(it)=\sum_{u\in\mathbb Z}p^uF_u
\end{equation}
where the $F_u\in\mathbb H^{n\times n}$ satisfy
\[
\sum_{u\in\mathbb Z}\|F_u\|_n<\infty.
\]
This set can be endowed with the multiplication
$$
(F\star G)(it)=\sum_{u\in\mathbb Z}p^u\sum_{v\in\mathbb Z}F_vG_{u-v}.
$$
We also define
$$
\| F\|= \sum_{u\in\mathbb Z}\|F_u\|_n.
$$
\end{e-definition}

\begin{e-definition}
We denote by $\mathcal APW(\mathbb S\mathbb R,\mathbb H^{n\times n})$ the set of matrix-valued functions of the form
\begin{equation}\label{effe}
F(it)=\sum_{u\in\mathbb R}e^{itu}F_u
\end{equation}
where the $F_u\in\mathbb H^{n\times n}$ are quaternionic matrices vanishing for all but a countable subset of $u \in \mathbb{R}$, and
\[
\sum_{u\in\mathbb R}\| F_u\|_n<\infty.
\]
This set can be endowed with the multiplication
$$
(F\star G)(it)=\sum_{u\in\mathbb R}e^{itu}\sum_{v\in\mathbb R}F_vG_{u-v}.
$$
We also define
$$
\| F\|= \sum_{u\in\mathbb R}\| F_u\|_n.
$$
\end{e-definition}

\begin{remark}
$\mathcal W_{\mathbb H}^{n\times n}$ can be identified with a subalgebra of $\mathcal APW(\mathbb S\mathbb R,\mathbb H^{n\times n})$ by substituting $p=e^{it}$.
\end{remark}

\begin{e-definition}
We denote by $\mathcal W(\mathbb S\mathbb R,\mathbb H^{n\times n})$ the set of all matrix-valued functions of the form
\begin{equation}\label{effe}
F(it)=C_F+\int_{\mathbb R}e^{itu}\Phi_F(u)\, du
\end{equation}
where $\Phi_F\in L_1(\mathbb R,\mathbb H^{n\times n})$ and $C_F\in\mathbb H^{n\times n}$ is a constant matrix.
This set can be endowed with the multiplication
\begin{equation*}
\begin{split}
(F\star G)(it)&=C_FC_G+\int_{\mathbb R}e^{itu}C_F\Phi_G(u)du+\int_{\mathbb R}e^{itu}\Phi_F(u)C_Gdu\\
&+\int_{\mathbb R}e^{itu} (\Phi_F\circ \Phi_G)(u)\ du.
\end{split}
\end{equation*}
We also define
$$
\| F\|= \| C_F\|_n+\int_{\mathbb R}\| \phi_F\|_n du.
$$
\end{e-definition}

\begin{e-definition}
We denote by $\mathcal B(\mathbb S\mathbb R,\mathbb H^{n\times n})$ the sum of $\mathcal APW(\mathbb S\mathbb R,\mathbb H^{n\times n})$ and $\mathcal W(\mathbb S\mathbb R,\mathbb H^{n\times n})$, which is to say the set of all matrix-valued functions of the form
\begin{equation}\label{effe}
F(it)=\sum_{u\in\mathbb R}e^{itu}F_u+\int_{\mathbb R}e^{itu}\Phi_F(u)\, du
\end{equation}
where $\Phi_F\in L_1(\mathbb R,\mathbb H^{n\times n})$ and the $F_u\in\mathbb H^{n\times n}$ satisfy
\[
\sum_{u\in\mathbb R}\| F_u\|_n<\infty.
\]
This set can be endowed with the multiplication
\begin{equation}
\begin{split}
(F\star G)(it)&=\sum_{u\in\mathbb R}e^{itu}\sum_{v\in\mathbb R}F_vG_{u-v}+\int_{\mathbb R}e^{itu}du\sum_{v\in\mathbb R}F_v\Phi_G(u-v) \\
&+\int_{\mathbb R}e^{itu}du\sum_{v\in\mathbb R}\Phi_F(u-v)G_v+\int_{\mathbb R}e^{itu} (\Phi_F\circ \Phi_G)(u)\, du.
\end{split}
\end{equation}
We also define
$$
\| F\|= \sum_{u\in\mathbb R}\| F_u\|_n+\int_{\mathbb R}\| \phi_F\|_n du.
$$
\end{e-definition}

\begin{proposition}
$\mathcal B(\mathbb S\mathbb R,\mathbb H^{n\times n})$ endowed with the $\star$-multiplication is a real Banach algebra, which contains $\mathcal APW(\mathbb S\mathbb R,\mathbb H)$, $\mathcal W(\mathbb S\mathbb R,\mathbb H)$ and $\mathcal W_{\mathbb H}^{n\times n}$ as closed subalgebras.
\end{proposition}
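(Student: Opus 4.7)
The plan is to mimic the proof of Proposition 2.3 essentially verbatim, with the scalar modulus $|\cdot|$ replaced by the operator norm $\|\cdot\|_n$ on $\mathbb H^{n\times n}$. The algebraic structure is inherited componentwise: associativity, distributivity and the fact that $\mathcal{APW}(\mathbb{SR},\mathbb{H}^{n\times n})$, $\mathcal{W}(\mathbb{SR},\mathbb{H}^{n\times n})$, and (via the identification $p = e^{it}$ of Remark 2.13) $\mathcal{W}_{\mathbb H}^{n\times n}$ are subalgebras follow by direct inspection of formula (2.10). The only genuine content is submultiplicativity of $\|\cdot\|$ and completeness.

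For submultiplicativity, I would copy the estimate at the end of the proof of Proposition 2.3 but using $\|F_vG_{u-v}\|_n \leq \|F_v\|_n\,\|G_{u-v}\|_n$ and similarly for the mixed sum-integral and integral-integral terms; this is legitimate because $\|\cdot\|_n$ is an operator norm, hence submultiplicative. After interchanging the sums and integrals (justified by the absolute convergence assumptions), one arrives at $\|F\star G\|\leq \|F\|\,\|G\|$.

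For completeness, given a Cauchy sequence $(F_m)_{m\in\mathbb N}$ in $\mathcal B(\mathbb{SR},\mathbb H^{n\times n})$ with $F_m(it)=\sum_{u\in\mathbb R}e^{itu}F_{m,u}+\int_{\mathbb R}e^{itu}\Phi_{F,m}(u)\,du$, I would argue that the almost-periodic part and the $L_1$-part are separately Cauchy. The $L_1$-part converges in $\mathcal W(\mathbb{SR},\mathbb H^{n\times n})$ because $L_1(\mathbb R,\mathbb H^{n\times n})$ is a Banach space (it is isomorphic to $L_1(\mathbb R)\otimes\mathbb H^{n\times n}$ with a finite-dimensional target, where completeness is standard). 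For the almost-periodic part, the union $S=\bigcup_{m}\{u:F_{m,u}\neq 0\}$ is countable, so enumerate it as $(w_k)_{k\in\mathbb N}$. The entries $(F_{m,w_k})_{m\in\mathbb N}$ are Cauchy in $(\mathbb H^{n\times n},\|\cdot\|_n)$ (a finite-dimensional, hence complete, real normed space), so we can form the pointwise limits $F_{w_k}$ and then verify, exactly as in the $\ell_1$-style argument of Proposition 2.3, that $\sum_k e^{itw_k}F_{w_k}$ is the norm limit of the $\sum_u e^{itu}F_{m,u}$ as $m\to\infty$.

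Finally, the claimed subalgebras are closed because the decomposition of $F\in\mathcal B(\mathbb{SR},\mathbb H^{n\times n})$ into its almost-periodic and continuous parts is unique (the Fourier-series part has no absolutely continuous component, the integral part has no Dirac-type atoms), so a limit lying in $\mathcal{APW}$ (resp.\ $\mathcal{W}$) forces its $\Phi_F$-part (resp.\ its coefficient family $F_u$) to vanish. The only subtle point in the whole argument is making sure the $L_1$-norm estimate for the integral-integral term in $\|F\star G\|$ is handled by Young's convolution inequality applied to $\|\Phi_F\|_n$ and $\|\Phi_G\|_n$ rather than naively; everything else is a bookkeeping exercise and no new idea beyond the scalar case is needed.
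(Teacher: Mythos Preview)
Your proposal is correct and follows essentially the same approach as the paper: the submultiplicativity computation is identical, and the subalgebra/closedness claims are handled the same way. The only minor difference is that for completeness the paper takes a shortcut by observing that convergence in $\mathcal B(\mathbb S\mathbb R,\mathbb H^{n\times n})$ is equivalent to entry-wise convergence in $\mathcal B(\mathbb S\mathbb R,\mathbb H)$ and then invoking Proposition~2.3, whereas you rerun the $\ell_1$-style argument directly with matrix coefficients; both are equally valid.
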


\begin{proof}
That $\mathcal B(\mathbb S\mathbb R,\mathbb H^{n\times n})$ is a real Banach space follows from Proposition 2.3, since convergence is $\mathcal B(\mathbb S\mathbb R,\mathbb H^{n\times n})$ equivalent to entry-wise convergence in $\mathcal B(\mathbb S\mathbb R,\mathbb H)$. Similarly, $\mathcal APW(\mathbb S\mathbb R,\mathbb H^{n\times n})$ and $\mathcal W(\mathbb S\mathbb R,\mathbb H^{n\times n})$ and $\mathcal W_{\mathbb H}^{n\times n}$ are closed subalgebras. Finally:
\begin{equation*}
\begin{split}
\|F\star G\|&=\sum_{u\in\mathbb R}\|\sum_{v\in\mathbb R}F_vG_{u-v}\|_n+\int_{\mathbb R}\|\sum_{v\in\mathbb R}F_v\phi_G(u-v)+\sum_{v\in\mathbb R}\phi_F(u-v)G_v\\
&+(\phi_F\circ \phi_G)(u)\|_ndu+\leq\sum_{u\in\mathbb R}\sum_{v\in\mathbb R}\|F_v\|_n\|G_{u-v}\|_n\\
&+\int_{\mathbb R}\sum_{v\in\mathbb R}\|F_v\|_n\|\phi_G(u-v)\|_n+\sum_{v\in\mathbb R}\|\phi_F(u-v)\|_n\|G_v\|_n\\
&+(\|\phi_F\|_n\circ \|\phi_G\|_n)(u)du=\|F\|\|G\|.
\end{split}
\end{equation*}
\end{proof}

We now extend the definition of the map $\omega=\omega_{i,j}$ :

\begin{e-definition}
Let $F\in \mathcal B(\mathbb S\mathbb R,\mathbb H^{n\times n})$ be given by 
$$
F(it)=\sum_{u\in\mathbb R}e^{itu}F_u+\int_{\mathbb R}e^{itu}\Phi_F(u)\, du.
$$ 
Then
\[
\omega(F)(t)=\sum_{u\in\mathbb R}e^{itu}\chi(F_u)+\int_{\mathbb R}e^{itu}\chi(\Phi_F(u)) du,
\]
where $\chi$ is defined as in the introduction (for matrices). Similarly, letting $F\in \mathcal W_{\mathbb H}^{n\times n}$ be given by $F(p)=\sum_{u\in\mathbb Z}p^uF_u$, we have
\[
\omega(F)(z)=\sum_{u\in\mathbb Z}z^u\chi(F_u).
\]
\end{e-definition}
It is again immediate that $\omega$ maps $\mathcal B(\mathbb S\mathbb R,\mathbb H^{n\times n})$ injectively into $\mathcal B^{2n\times 2n}(\mathbb R,\mathbb C_i)$ with values in $\mathbb C_i^{2n\times 2n}$. Then, it maps $\mathcal APW(\mathbb S\mathbb R,\mathbb H^{n\times n})$ injectively into \newline $\mathcal APW^{2n\times 2n}(\mathbb R,\mathbb C_i)$, and $\mathcal W_{\mathbb H}^{n\times n}$ injectively into $\mathcal W^{2n\times 2n}$.

\begin{lemma}\label{lemmastar}
Let $F,G\in\mathcal B(\mathbb S\mathbb R, \mathbb H^{n\times n})$. Then
$$
\omega(F\star G)(t)=\omega(F)(t)\omega(G)(t), \quad t \in \mathbb{R}.
$$
\end{lemma}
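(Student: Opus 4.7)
The plan is to mirror the proof of the scalar Lemma 2.5, relying on the fact that the map $\chi : \mathbb{H}^{n\times n} \to \mathbb{C}_i^{2n\times 2n}$ from Definition 1.1 is $\mathbb{R}$-linear and multiplicative (as noted in the introduction). This is exactly the structural property that powered the scalar proof, and it is preserved in the matrix setting without change.

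First I would apply $\omega$ to the expansion (2.6) of $F\star G$. By Definition 2.17, this replaces each quaternionic matrix coefficient by its $\chi$-image, yielding
\begin{equation*}
\begin{split}
\omega(F\star G)(t)&=\sum_{u\in\mathbb R}e^{itu}\chi\!\left(\sum_{v\in\mathbb R}F_vG_{u-v}\right)+\int_{\mathbb R}e^{itu}\chi\!\left(\sum_{v\in\mathbb R}F_v\Phi_G(u-v)\right)du\\
&+\int_{\mathbb R}e^{itu}\chi\!\left(\sum_{v\in\mathbb R}\Phi_F(u-v)G_v\right)du+\int_{\mathbb R}e^{itu}\chi\bigl((\Phi_F\circ\Phi_G)(u)\bigr)du.
\end{split}
\end{equation*}

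Next I would pull $\chi$ inside the sums and the convolution integral. For the $v$-sums this is immediate from additivity together with the fact that only countably many summands are nonzero and $\sum_v \|F_v\|_n < \infty$, $\sum_v\|G_v\|_n<\infty$; for the convolution $(\Phi_F\circ \Phi_G)(u)$ the interchange is justified by continuity of $\chi$ (since $\|\chi(A)\|\leq c\,\|A\|_n$ for some constant $c$) together with $\Phi_F,\Phi_G\in L_1$. Then multiplicativity of $\chi$ converts each product $\chi(F_vG_{u-v})$ into $\chi(F_v)\chi(G_{u-v})$, and similarly for the mixed and convolution terms, so that in particular $\chi((\Phi_F\circ\Phi_G)(u))=(\chi(\Phi_F)\circ\chi(\Phi_G))(u)$.

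Finally, I would recognize the resulting expression as precisely the formula for the $\star$-product in $\mathcal B^{2n\times 2n}(\mathbb R,\mathbb C_i)$ of $\omega(F)(t)$ and $\omega(G)(t)$. Since in the complex Wiener-type algebra the $\star$-product coincides with the pointwise product of functions (the exponentials $e^{itu}$ being scalars, they commute past the complex matrix coefficients and the usual identity $e^{itu}e^{itv}=e^{it(u+v)}$ transforms convolution of coefficients into pointwise multiplication of the sums), this identifies the expression with $\omega(F)(t)\omega(G)(t)$. There is no genuine obstacle beyond bookkeeping the four summands of (2.6); the only substantive ingredient is the interchange of $\chi$ with the infinite sums and the Lebesgue integral, which is supplied by absolute convergence built into the definition of $\mathcal B(\mathbb S\mathbb R,\mathbb H^{n\times n})$.
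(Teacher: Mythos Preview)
Your proposal is correct and follows exactly the paper's approach: the paper's own proof of this lemma is simply ``The proof of Lemma 2.5 still applies,'' and Lemma 2.5 is proved precisely by pushing $\chi$ through the four summands of the $\star$-product using its additivity and multiplicativity. Your write-up is in fact more careful than the paper's, since you make explicit the absolute-convergence justification for interchanging $\chi$ with the infinite sums and the convolution integral.
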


\begin{proof}
The proof of Lemma 2.5 still applies.
\end{proof}

\begin{theorem}
Let $F\in\mathcal B(\mathbb S\mathbb R,\mathbb H^{n\times n})$. The following are equivalent{\rm :}
\begin{enumerate}
\item[(i)] The function $F$ is invertible in $\mathcal B(\mathbb S\mathbb R,\mathbb H^{n\times n})$.
\item[(ii)] There exist orthogonal $i,j\in\mathbb S$ such that $$\inf_{t\in\mathbb R}  |\det \omega_{i,j}(F)(t)|>0.$$
\item[(iii)] For any orthogonal $i,j\in\mathbb S,$ $$\inf_{t\in\mathbb R}  |\det \omega_{i,j}(F)(t)|>0.$$
\item[(iv)] $$\inf_{i\perp j\in\mathbb S, t\in\mathbb R}  |\det \omega_{i,j}(F)(t)|>0.$$
\end{enumerate}
\end{theorem}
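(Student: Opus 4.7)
The entire argument will be organized around transferring $F$ through the homomorphism $\omega = \omega_{i,j}$ and appealing to the classical complex matrix-valued Wiener--L\'evy theorem in $\mathcal B^{2n\times 2n}(\mathbb R,\mathbb C_i)$. Since Lemma 2.17 already gives $\omega(F\star G) = \omega(F)\omega(G)$ in the matrix setting, the map $\omega$ is a Banach algebra homomorphism onto its image; the only real content is identifying that image with $\mathcal B(\mathbb S\mathbb R,\mathbb H^{n\times n})$ via the symmetry $J_n\overline{M(-t)}J_n^T = M(t)$. Note that (v) and (vi) from Theorem 2.6 are absent here because for $n>1$ pointwise invertibility in $\mathbb H^{n\times n}$ is not a natural ``non-vanishing'' condition.

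For the direction $(i)\Rightarrow (iv)$, applying $\omega_{i,j}$ to the identity $F\star F^{-\star} = I$ and using Lemma 2.17 gives $\omega_{i,j}(F)(t)\,\omega_{i,j}(F^{-\star})(t) = I_{2n}$, hence
\[
|\det\omega_{i,j}(F)(t)|\cdot|\det\omega_{i,j}(F^{-\star})(t)| = 1.
\]
A uniform upper bound on the second factor, independent of $i\perp j\in\mathbb S$ and $t\in\mathbb R$, is obtained from the entry-wise estimate $\|\omega_{i,j}(G)(t)\|\leq 2\|G\|$ (the matrix analogue of (2.7)), followed by Hadamard's inequality. This yields a positive lower bound on $|\det\omega_{i,j}(F)(t)|$. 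The implications $(iv)\Rightarrow (iii)\Rightarrow (ii)$ are immediate.

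The crux is $(ii)\Rightarrow (i)$. Fix orthogonal $i,j\in\mathbb S$ with $\inf_t|\det\omega_{i,j}(F)(t)|>0$. The classical Wiener--L\'evy theorem for the complex matrix-valued algebra $\mathcal B^{2n\times 2n}(\mathbb R,\mathbb C_i)$ (the matrix analogue of the scalar criterion cited in the introduction, following \cite{gf}) then guarantees a two-sided inverse $M\in\mathcal B^{2n\times 2n}(\mathbb R,\mathbb C_i)$ of $\omega_{i,j}(F)$. I must verify that $M$ lies in the image of $\omega_{i,j}$, i.e.\ that $J_n\overline{M(-t)}J_n^T = M(t)$. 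Starting from $J_n\overline{\omega_{i,j}(F)(-t)}J_n^T = \omega_{i,j}(F)(t)$ and the identity $J_n^{-1}=J_n^T=-J_n$, a short manipulation shows that $J_n\overline{M(-t)}J_n^T$ is also a two-sided inverse of $\omega_{i,j}(F)(t)$; by uniqueness it equals $M(t)$. Consequently $M = \omega_{i,j}(G)$ for a unique $G\in\mathcal B(\mathbb S\mathbb R,\mathbb H^{n\times n})$, and injectivity plus multiplicativity of $\omega_{i,j}$ force $F\star G = G\star F = I$.

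The main obstacle is that the complex matrix-valued Wiener--L\'evy theorem for $\mathcal B^{n\times n}(\mathbb R,\mathbb C)$ itself is invoked here but not proved in the paper; I would either cite it from \cite{gf}, or derive it by combining the matrix Wiener--L\'evy theorem for the continuous part $\mathcal W^{n\times n}(\mathbb R,\mathbb C)$ with the standard $APW^{n\times n}$ result and a local-principle argument, exploiting the fact that the symbol $\det \omega_{i,j}(F)$ is bounded away from zero on the whole real line. A secondary subtlety is confirming that the bound (2.7) survives passage to $n\times n$ blocks with the appropriate constant depending on $n$, which is routine via the operator norm inequality $\|A+Bj\|_n\geq \max(\|A\|,\|B\|)$ used entry-block-wise.
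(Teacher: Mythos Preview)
Your argument is correct and structurally identical to the paper's: both use Lemma~2.17 to reduce to the complex algebra, bound the determinant of $\omega_{i,j}(F^{-\star})$ uniformly for $(i)\Rightarrow(iv)$, and verify the symmetry $J_n\overline{M(-t)}J_n^T=M(t)$ to pull the inverse back through $\omega$. The one genuine difference is in $(ii)\Rightarrow(i)$. You invoke the matrix-valued Wiener--L\'evy theorem for $\mathcal B^{2n\times 2n}(\mathbb R,\mathbb C_i)$ as a black box and flag it as an external dependency. The paper avoids this entirely: it applies the \emph{scalar} Wiener--L\'evy theorem to $d(it):=\det\omega_{i,j}(F)(t)\in\mathcal B(\mathbb R,\mathbb C_i)$, and then writes the inverse explicitly as $\omega_{i,j}(F)^{-1}(t)=d(it)^{-1}\adj(\omega_{i,j}(F)(t))$, noting that every entry of the adjugate is a polynomial in the entries of $\omega_{i,j}(F)(t)$ and hence lies in $\mathcal B(\mathbb R,\mathbb C_i)$. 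This is more self-contained---it effectively \emph{proves} the matrix Wiener--L\'evy theorem for $\mathcal B$ rather than citing it---and removes the ``main obstacle'' you identified. A minor cosmetic difference: for the determinant bound in $(i)\Rightarrow(iv)$ the paper uses the Leibniz expansion to get $(2n)!\|G\|^{2n}$, whereas you use Hadamard's inequality; either works.
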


\begin{proof}
$(i) \implies (iv)$:
By the multiplicativity of $\omega$ we have that 
\begin{equation}
|\det \omega_{i,j}(F)(t)||\det \omega_{i,j}(F^{-\star})(t)|=1.
\end{equation}
Note that each entry of $\omega_{i,j}(G)(t)$ is an entry of either $A(it), B(it), \overline{A(-it)},$ or $-\overline{B(-it)}$, where $G(it)=A(it)+B(it)j$. For any $1\leq m,k\leq 2n$ we have
$$
\sup_{i,t}\{|a_{m,k}(it)|,|b_{m,k}(it)|\}\leq \sup_{i,t}|a_{m,k}(it)+b_{m,k}(it)j|\leq \|G\|,
$$
since the modulus of any entry of $G(it)$ is at most $\|G(it)\|_n$, which in turn is bounded by $\|G\|$ for all $i\in\mathbb S,t\in\mathbb R$.
The determinant of a $2n\times 2n$ matrix is (up to a sign for each term) a sum of $(2n)!$ products of $2n$ entries, so by the triangle inequality
$$
\sup_{i,j,t} |\det \omega_{i,j}(G)(t)|\leq(2n)!\|G\|^{2n}.
$$
Thus it follows from (2.14) that
$$
\inf_{i,j,t}|\det \omega_{i,j}(F)(t)|\geq \dfrac{1}{(2n)!\|F^{-\star}\|^{2n}}>0.
$$
$(iv) \implies (iii) \implies (ii)$: Trivial.\\
$(ii) \implies (i)$: Let us assume that $$\inf_{t\in\mathbb R} |\det \omega_{i,j}(F)(t)|>0$$ for some orthogonal $i,j\in\mathbb S$. As a scalar complex function in the algebra $\mathcal B(\mathbb R,\mathbb C)$, $d(it):=\det \omega_{i,j}(F)(t)$ is invertible. Then we see that
\begin{equation*}
{\omega_{i,j}(F)}^{-1}(t)=\dfrac{1}{d(it)}\adj(\omega_{i,j}(F)(t))
\end{equation*}
is a matrix function in the algebra $\mathcal B(\mathbb R,\mathbb C^{2n \times 2n})$, since each entry of the adjugate matrix is a product of functions in $\mathcal B(\mathbb R,\mathbb C)$. The inverse matrix corresponds to a $G\in\mathcal B(\mathbb S\mathbb R,\mathbb H^{n\times n})$, in the sense that $\omega_{i,j}(G)(t)={\omega_{i,j}(F)}^{-1}(t)$, if and only if
\begin{equation}
J_n\overline{\omega_{i,j}(F)^{-1}(-t)}J_n^T={\omega_{i,j}(F)}^{-1}(t),
\end{equation}
where 
$$
J_n=\left[\begin{matrix} 0 &I_n \\
-I_n &0
\end{matrix}\right].
$$
To show (2.14), we simply observe that
$$
J_n\overline{\omega_{i,j}(F)(-t)}J_n^T={\omega_{i,j}(F)}(t),
$$
and invert both sides (noting that $J_n^T=J_n^{-1}$). Thus, $G$ exists and $$\omega_{i,j}(F\star G)=I_{2n},$$ implying $G=F^{-1}$.
\end{proof}

\begin{corollary}
Invertibility in $\mathcal APW(\mathbb S\mathbb R,\mathbb H^{n\times n})$, $\mathcal W(\mathbb S\mathbb R,\mathbb H^{n\times n})$ is equivalent to conditions (ii)-(iv) for matrix-valued functions in the aforementioned subalgebras. Likewise in $\mathcal W_{\mathbb H}^{n\times n}$, with the following modifications:
\begin{enumerate}
\item[(ii)] There exist orthogonal $i,j\in\mathbb S$ such that $\det \omega_{i,j}(F)(z)\neq 0$ for all \newline $z\in\partial\mathbb B\cap \mathbb C_i$.
\item[(iii)] $\det \omega_{i,j}(F)(z)\neq 0$ for any orthogonal $i,j\in\mathbb S$ and any $z\in\partial\mathbb B\cap \mathbb C_i$.
\item[(iv)] $$\min_{i\perp j\in\mathbb S, z\in\partial\mathbb B\cap \mathbb C_i}  |\det \omega_{i,j}(F)(z)|>0.$$
\end{enumerate}
\end{corollary}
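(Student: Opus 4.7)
The overall strategy is to use Theorem 2.18 as the engine for the ambient algebra $\mathcal{B}(\mathbb{S}\mathbb{R},\mathbb{H}^{n\times n})$ and reduce each case of the corollary to the assertion that the relevant subalgebra is closed under $\star$-inversion. Once $F$ lies in the subalgebra and we exhibit an inverse inside it, uniqueness of $\star$-inverses in $\mathcal{B}$ identifies it with $F^{-\star}$ and transfers the equivalences (i)--(iv) verbatim. The three cases then differ only in how one locates the inverse.

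For $\mathcal{APW}(\mathbb{S}\mathbb{R},\mathbb{H}^{n\times n})$ I would lift the argument of Corollary 2.12 to the matrix level. Write $F^{-\star}(it) = \sum_{u} e^{itu} G_u + \int_{\mathbb{R}} e^{itu}\Phi_G(u)\,du$ inside $\mathcal{B}$. Since $\Phi_F\equiv 0$, formula (2.13) splits $F\star F^{-\star}$ into an APW block $F\star\bigl(\sum_{u} e^{itu} G_u\bigr)$ and an integral block; the first must equal $I_n$, so $\sum_{u} e^{itu} G_u$ is already a $\mathcal{B}$-inverse of $F$, and uniqueness forces $\Phi_G\equiv 0$.

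For $\mathcal{W}(\mathbb{S}\mathbb{R},\mathbb{H}^{n\times n})$ the extra wrinkle is that one must first see that the constant term $C_F$ is itself invertible in $\mathbb{H}^{n\times n}$. The Riemann--Lebesgue lemma applied to the $L^1$ entries of $\chi(\Phi_F)$ gives $\omega(F)(t)\to\chi(C_F)$ as $|t|\to\infty$, and condition (iv) of Theorem 2.18 then forces $|\det\chi(C_F)|>0$, so $C_F$ is invertible (using that $\chi$ is an injective unital algebra homomorphism). With $C_F$ invertible in hand, the same decomposition as in the APW case makes the APW component of $F\star F^{-\star}=I_n$ read $C_F G_u = \delta_{u,0} I_n$, whence $G_0 = C_F^{-1}$ and $G_u=0$ for $u\neq 0$. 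Thus the APW part of $F^{-\star}$ collapses to the constant $C_F^{-1}$, placing $F^{-\star}\in\mathcal{W}(\mathbb{S}\mathbb{R},\mathbb{H}^{n\times n})$.

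For $\mathcal{W}_{\mathbb{H}}^{n\times n}$ the cleanest route is through the embedding $\omega\colon \mathcal{W}_{\mathbb{H}}^{n\times n}\hookrightarrow \mathcal{W}^{2n\times 2n}$. The classical complex matrix Wiener--L\'evy theorem characterises invertibility in $\mathcal{W}^{2n\times 2n}$ by non-vanishing of the determinant on the unit circle, and the compactness of $\partial\mathbb{B}\cap\mathbb{C}_i$ automatically upgrades the infimum in (iv) to a minimum and the conditions in (ii)--(iii) to pointwise non-vanishing, which accounts for the modifications in the statement. The main technical point, which I expect to be the crux, is to show that $\omega(F)^{-1}$ still lies in the image of $\omega$; for this I would invert both sides of the defining symmetry $\omega(F)(z) = J_n\overline{\omega(F)(\bar z)}J_n^T$. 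Since $J_n^{-1}=J_n^T$, a short calculation gives $\omega(F)^{-1}(z) = J_n\overline{\omega(F)^{-1}(\bar z)}J_n^T$, so $\omega(F)^{-1}$ equals $\omega(F^{-\star})$ for a unique $F^{-\star}\in \mathcal{W}_{\mathbb{H}}^{n\times n}$. Varying the basis $(i,j)\in\mathbb{S}$ then delivers the stronger forms (iii) and (iv).
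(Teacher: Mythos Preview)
Your proposal is correct. The paper's own proof is a single sentence that defers to the scalar $\mathcal{APW}$ corollary, so your $\mathcal{APW}$ and $\mathcal{W}$ arguments are exactly the decomposition-and-uniqueness scheme the paper has in mind; one small remark is that for $\mathcal{W}$ the Riemann--Lebesgue detour is not needed, since the $\mathcal{APW}$-component identity $C_F G_0 = I_n$ already forces the square quaternionic matrix $C_F$ to be invertible, after which $C_F G_u = 0$ for $u \neq 0$ immediately gives $G_u = 0$. For $\mathcal{W}_{\mathbb{H}}^{n\times n}$ you take a slightly different route: rather than embed into $\mathcal{B}(\mathbb{S}\mathbb{R},\mathbb{H}^{n\times n})$ via $p = e^{it}$ and argue closedness under inversion there (which is what ``similarly'' points to), you work directly with the discrete embedding $\omega$ into $\mathcal{W}^{2n\times 2n}$ and re-run the symmetry-inversion step from the proof of Theorem~2.18, with $\bar z$ in place of $-t$. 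Both approaches are valid; yours is arguably cleaner for the discrete case since it bypasses two successive subalgebra reductions (first to $\mathcal{APW}$, then to integer frequencies), at the cost of re-deriving the (i)$\Leftrightarrow$(ii) equivalence from scratch rather than inheriting it from Theorem~2.18.
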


\begin{proof}
All the subalgebras are closed with respect to inversion, as can be shown similarly to the proof of Corollary 2.9.
\end{proof}

\section{Factorization and the Riemann-Hilbert problem}

We limit the discussion to the matrix-valued discrete and continuous algebras. First, let us reiterate definitions of subalgebras discussed in \cite{acks} in the scalar-valued case.
 
\begin{e-definition}
\hfill \break
\begin{enumerate}
\item[(i)] We denote by $\mathcal W_{\mathbb H,+}^{n\times n}$ (resp. $\mathcal W_{\mathbb H,-}^{n\times n}$) the set of elements 
$$
F(p)=\sum_{u\in\mathbb Z}
p^uF_u\,\in\,\mathcal W_{\mathbb H}^{n\times n}
$$ 
for which $F_u=0$ for $u<0$ (resp.  for $u>0$).
\item[(ii)] We denote by $\mathcal W_+(\mathbb S\mathbb R,\mathbb H^{n\times n})$ (resp. $\mathcal W_-(\mathbb S\mathbb R,\mathbb H^{n\times n})$) the set of elements 
$$
F(it)=C_F+\int_{\mathbb R}e^{itu}\Phi_F(u)\, du
$$
for which $\Phi_F(u)=0$ for $u<0$ (resp.  for $u>0$).
\end{enumerate}
\end{e-definition}

Note that any function in $\mathcal W_{\mathbb H,+}^{n\times n}$ is slice hyperholomorphic on the open unit ball $\mathbb B:=\{p\in\mathbb H : |p|<1\}$, while any function in $\mathcal W_{\mathbb H,-}^{n\times n}$ is slice hyperholomorphic on $\mathbb H\setminus\overline{\rm \mathbb B}=\{p\in\mathbb H : |p|>1\}$. Likewise, any function in $\mathcal W_+(\mathbb S\mathbb R,\mathbb H^{n\times n})$ has a slice hyperholomorphic continuation to the left half-space $\{p\in\mathbb H : \Rea(p)<0\}$ (obtained by setting $p$ instead of $it$), while any function $\mathcal W_-(\mathbb S\mathbb R,\mathbb H^{n\times n})$ has a slice hyperholomorphic continuation to the right half-space $\{p\in\mathbb H : \Rea(p)>0\}$.

\begin{e-definition}
\hfill \break
\begin{enumerate}
\item[(i)] Given $F\in\mathcal W_{\mathbb H}^{n\times n}$, we say that $F$ admits a factorization if there exist a diagonal matrix $D(p)=\diag\left[p^{k_1},\dots,p^{k_n}\right]$ (where $k_1\geq k_2\geq\dots\geq k_n$ are integers) and invertible (with respect to their respective subalgebras) $F_\pm\in\mathcal W_{\mathbb H,\pm}^{n\times n}$ such that 
$$
F(p)=(F_-\star D\star F_+)(p).
$$
\item[(ii)] Given $F\in\mathcal W(\mathbb S\mathbb R,\mathbb H^{n\times n})$, we say that $F$ admits a factorization if there exist a diagonal matrix 
$$
D(p)=\diag\left[\left(\dfrac{p+1}{p-1}\right)^{k_1},\dots,\left(\dfrac{p+1}{p-1}\right)^{k_n}\right]
$$ 
(where $k_1\geq k_2\geq\dots\geq k_n$ are integers) and invertible (with respect to their respective subalgebras) $F_\pm\in\mathcal W_\pm(\mathbb S\mathbb R,\mathbb H^{n\times n})$ and such that 
$$
F(p)=(F_-\star D\star F_+)(p).
$$
\end{enumerate}
In both cases $k_1\geq k_2\geq\dots\geq k_n$ are called the factorization indices, and the factorization is called canonical if $D=I$.
\end{e-definition}

It is well known (see \cite{cg}) that invertible functions in the complex-valued counterparts of the aforementioned algebras, admit factorization. To be clear, the diagonal matrix in $\mathcal W(\mathbb R,\mathbb C^{n\times n})$ is of the form 
$$
D(t)=\diag\left[\left(\dfrac{t-i}{t+i}\right)^{k_1},\dots,\left(\dfrac{t-i}{t+i}\right)^{k_n}\right],
$$ 
and in the quaternionic case we set $p=it$. It is also known that in the complex case the factorization indices are unique, which is to say that the diagonal elements are uniquely determined by the function once the order along the diagonal is fixed (multiplication by the same elementary matrices from the left and right can alter the order). In order to establish these facts in the quaternionic setting, we need to first recall the complex Riemann-Hilbert problem and its connection to factorization. For our purposes it suffices to consider the problem in the context of the Wiener algebras, but it should be noted that there is a more general theory (studied in \cite{cg}).

\begin{e-definition}
\hfill \break
\begin{enumerate}
\item[(i)] Let $F\in\mathcal W_{\mathbb C}^{n\times n}$. The associated barrier problem is to describe all piecewise holomorphic vector functions  $\Psi(z)$ given by
\begin{equation}
  \Psi(z)=\begin{cases}
    \Psi_{+}(z), & \text{if $|z|<1$}.\\
    \Psi_{-}(z), & \text{if $|z|>1$}.
  \end{cases}
\end{equation}
and satisfying
\begin{equation}
\forall z\in\partial \mathbb D \quad \Psi_-(z)=F(z)\Psi_+(z),
\end{equation}
where 
$$
\Psi_{+}\in\mathcal W_{\mathbb C,+}, \quad \Psi_{-}(z)-r(z)\in\mathcal W_{\mathbb C,-}
$$ and $r$ is a polynomial. We define $\ord(\Psi)$ to be the (possibly negative) order of $\infty$ as a pole of $\Psi$.
\item[(ii)] Let $F\in\mathcal W(\mathbb R,\mathbb C^{n\times n})$. The associated barrier problem is to describe all piecewise holomorphic vector functions  $\Psi(t)$ given by
\begin{equation}
  \Psi(t)=\begin{cases}
    \Psi_{+}(t), & \text{if $\Ima(t)>0$}.\\
    \Psi_{-}(t), & \text{if $\Ima(t)<0$}.
  \end{cases}
\end{equation}
and satisfying
\begin{equation}
\forall t\in\mathbb R \quad \Psi_-(t)=F(t)\Psi_+(t),
\end{equation}
where 
$$\Psi_{+}\in\mathcal W_+(\mathbb R,\mathbb C^{n\times n}), \quad \Psi_{-}(t)-r\left(\dfrac{t-i}{t+i}\right)\in\mathcal W_-(\mathbb R,\mathbb C^{n\times n})
$$ 
and $r$ is a polynomial. We define $\ord(\Psi)$ to be the (possibly negative) order of $-i$ as a pole of $\Psi$.
\end{enumerate}
\end{e-definition}

\newpage
\begin{e-definition}
\hfill \break
\begin{enumerate}
\item[(i)] A solution set $\{\Psi_1(z),\dots,\Psi_s(z)\}$ of the barrier problem (3.2) is called complete if every solution $\Psi$ has a representation of the form 
$$\Psi(z)=\sum_{1\leq m\leq s}q_m(z)\Psi_m(z),
$$ 
where the $q_m$ are polynomials.
\item[(ii)] A solution set $\{\Psi_1(t),\dots,\Psi_s(t)\}$ of the barrier problem (3.4) is called complete if every solution has a representation of the form 
$$
\Psi(t)=\sum_{1\leq m\leq s}q_m\left(\dfrac{t-i}{t+i}\right)\Psi_m(t),
$$
where the $q_m$ are polynomials.
\end{enumerate}
\end{e-definition}

\begin{e-definition}
A solution set $\{\Psi_1,\dots,\Psi_n\}$ of eq. (3.2) (eq. (3.4)) is called admissible if $\Psi_1(0),\dots,\Psi_n(0)$ ($\Psi_1(i),\dots,\Psi_n(i)$) are linearly independent over $\mathbb C$, and 
$$
k_1:=\ord(\Psi_1)\geq k_2:=\ord(\Psi_2)\geq \dots\geq  k_n:= \ord(\Psi_n).
$$ 
$[k_1,\dots,k_n]$ is called the index set of $\Psi$.
\end{e-definition}

\begin{proposition}
Let $F\in\mathcal W_{\mathbb C}^{n\times n}$ ($F\in\mathcal W(\mathbb R,\mathbb C^{n\times n})$). Then there is an integer $\alpha(F)$ such that any non-zero solution $\Psi$ of eq. (3.2) (eq. (3.4)) satisfies $\ord(\Psi)\geq \alpha(F)$.
\end{proposition}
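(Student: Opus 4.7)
The plan is to reformulate the statement as one about a descending chain of solution subspaces and then show this chain terminates. For each $M \in \mathbb Z$, let $V_M$ denote the set of $\Psi_+ \in \mathcal W_{\mathbb C,+}^n$ that arise as the interior part of a solution $\Psi$ of (3.2) with $\ord(\Psi) \leq M$; this translates into the Fourier-coefficient condition $(F \star \Psi_+)_l = 0$ for every $l > M$ on the Wiener-algebra product. The spaces are nested, $V_{M-1} \subseteq V_M$, and the proposition is equivalent to the assertion that $V_M = \{0\}$ for some (hence all sufficiently small) $M$.

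First I would verify $\bigcap_M V_M = \{0\}$. A $\Psi_+$ in this intersection satisfies $F(z) \Psi_+(z) = 0$ identically on $\partial\mathbb D$. Assuming $\det F \not\equiv 0$, left multiplication by $\adj F$ gives the scalar identity $(\det F)(z) \Psi_+(z) = 0$; since $\det F$ is continuous and nontrivial, its zero set is a closed proper subset of the circle, so each component of $\Psi_+$ vanishes on an open arc. As $\Psi_+$ extends continuously to $\overline{\mathbb D}$ and holomorphically to $\mathbb D$, boundary uniqueness (the F.\ and M.\ Riesz theorem) forces $\Psi_+ \equiv 0$. The degenerate case $\det F \equiv 0$ can be treated separately by reducing to a sub-problem on the range of $F$.

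The decisive step is to promote ``trivial intersection'' to $V_M = \{0\}$ for some finite $M$. Since passing from $V_M$ to $V_{M-1}$ adds $n$ scalar linear equations (the components of $(F \star \Psi_+)_M = 0$), one has $\dim_{\mathbb C}(V_M / V_{M-1}) \leq n$; hence if some $V_M$ is finite-dimensional, the entire descending chain below it consists of finite-dimensional subspaces, and — having trivial intersection — must stabilize at $\{0\}$. The main obstacle is therefore showing that $V_M$ is finite-dimensional for some (equivalently, every) $M$. I would handle this by extracting a matrix rational factor $R$ from $F$ to absorb the boundary zeros of $\det F$, writing $F = R \star \widetilde F$ with $\widetilde F$ invertible in $\mathcal W_{\mathbb C}^{n \times n}$. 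For invertible $\widetilde F$ the corresponding solution space is finite-dimensional by the classical Wiener--Hopf factorization (\cite{cg}) — in fact its dimension is computable from the partial indices — and the factor $R$ adjoins only a finite-dimensional space of additional freedom coming from its boundary poles.

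For the continuous case (3.4), the Cayley transform $t \mapsto (t-i)/(t+i)$ biholomorphically maps the upper half-plane to the unit disk and transports the barrier problem on $\mathbb R$ to one on $\partial \mathbb D$, identifying $-i$ with $\infty$; the discrete argument then applies verbatim and yields the required $\alpha(F)$.
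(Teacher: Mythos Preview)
The paper does not actually prove this proposition: it is stated as a known result from the classical complex theory, with \cite{cg} as the reference, and is then used as a black box to deduce the quaternionic analogue (Proposition~3.15) via the map $\omega$. So there is no ``paper's proof'' to compare against; you are supplying an argument where the paper simply cites one.

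Your outline is sound for the case that matters in the paper, namely $F$ invertible in $\mathcal W_{\mathbb C}^{n\times n}$: then $\det F$ has no zeros on $\partial\mathbb D$, the intersection $\bigcap_M V_M=\{0\}$ is immediate, and finite-dimensionality of $V_M$ follows directly from the Wiener--Hopf factorization $F=F_-DF_+$ (which in \cite{cg} is established by rational approximation, independently of the Riemann--Hilbert problem, so there is no circularity). The descending-chain argument then finishes the job.

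The genuine gap is your handling of non-invertible $F$. You propose to write $F=R\star\widetilde F$ with $R$ rational and $\widetilde F$ invertible in the Wiener algebra, so that $R$ ``absorbs the boundary zeros of $\det F$''. This does not work in general: $\det F$ is merely a continuous function in $\mathcal W_{\mathbb C}$, and its zero set on $\partial\mathbb D$ can be an arbitrary closed subset (a Cantor set, a fat arc, etc.), not a finite set of isolated zeros of finite order. There is no rational matrix $R$ that can absorb such a zero set while leaving an invertible Wiener-algebra cofactor. Your fallback for $\det F\equiv 0$ (``reduce to a sub-problem on the range of $F$'') is also only a sketch; note that for $F=0$ the proposition as literally stated is false, since $\Psi_-\equiv 0$ with arbitrary nonzero $\Psi_+$ gives solutions with $\ord(\Psi)=-\infty$. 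In the paper's context only invertible $F$ are ever used, so the cleanest fix is simply to state that hypothesis and drop the rational-factor step altogether.
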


\begin{e-definition}
An admissible solution set $\{\Psi_1,\dots,\Psi_n\}$ of eq. (3.2) (eq. (3.4)) is called standard if its index set is minimal (with respect to the lexicographic order from left to right) among all admissible solution sets.
\end{e-definition}

\begin{theorem}
Let $F\in\mathcal W_{\mathbb C}^{n\times n}$ ($F\in\mathcal W(\mathbb R,\mathbb C^{n\times n})$). Given any standard solution set $\{\Psi_1,\dots,\Psi_n\}$ of the barrier problem (3.2) (problem (3.4)) we can obtain a factorization $F=F_-DF_+$ by setting 
$$
F_-(z)=[z^{-\ord(\Psi_1)}\Psi_{1,-}(z),\dots,z^{-\ord(\Psi_n)}\Psi_{n,-}(z)]
$$
$$
\left(F_-(t)=\left[\left(\dfrac{t-i}{t+i}\right)^{-\ord(\Psi_1)}\Psi_{1,-}(t),\dots,\left(\dfrac{t-i}{t+i}\right)^{-\ord(\Psi_n)}\Psi_{n,-}(t)\right]\right),
$$
$$
D(z)=\diag[z^{\ord(\Psi_1)},\dots,z^{\ord(\Psi_n)}]
$$ 
$$
\left(D(t)=\diag\left[\left(\dfrac{t-i}{t+i}\right)^{\ord(\Psi_1)},\dots,\left(\dfrac{t-i}{t+i}\right)^{\ord(\Psi_n)}\right]\right),
$$ 
$$
F_+=[\Psi_{1,+},\dots,\Psi_{n,+}]^{-1}.
$$ 
Conversely, given a factorization $F=F_-DF_+$, then the set  $\Psi_1,\dots,\Psi_n$ given by 
$$
[\Psi_{1,+},\dots,\Psi_{n,+}]=F_+^{-1}, \quad [\Psi_{1,-},\dots,\Psi_{n,-}]=F_- D
$$ 
is a complete and standard solution set.
\end{theorem}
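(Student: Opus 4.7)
The plan is to prove the two directions separately, with the barrier relation $\Psi_-=F\Psi_+$ serving as the dictionary between factorizations and solution sets. I will carry out the argument for the discrete case; the continuous case is parallel after replacing every occurrence of $z^k$ by $\bigl((t-i)/(t+i)\bigr)^k$ and of $0$, $\infty$ by $i$, $-i$.

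For the direction ``standard solution set $\Rightarrow$ factorization,'' first check that $F_-$ defined in the statement actually lies in $\mathcal W_{\mathbb C,-}^{n\times n}$. Writing $\Psi_{m,-}(z)=r_m(z)+R_m(z)$ with $r_m$ a polynomial of degree at most $k_m=\ord(\Psi_m)$ and $R_m\in\mathcal W_{\mathbb C,-}$, multiplication by $z^{-k_m}$ sends $r_m$ to a polynomial in $1/z$ and $R_m$ to an element of $\mathcal W_{\mathbb C,-}$. Next, apply the barrier identity column by column to obtain the matrix identity $F(z)[\Psi_{1,+}(z),\dots,\Psi_{n,+}(z)]=[\Psi_{1,-}(z),\dots,\Psi_{n,-}(z)]$ on $\partial\mathbb D$, and multiply on the right by $\diag[z^{-k_1},\dots,z^{-k_n}]$; rearranging gives $F=F_-DF_+$ on the circle and, by analytic continuation in the two half-planes, as an identity in $\mathcal W_{\mathbb C}^{n\times n}$.

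The delicate point is invertibility of $F_+$ and $F_-$ inside their subalgebras. Admissibility gives $\det F_+^{-1}(0)\ne 0$, so the task is to show $\det[\Psi_{1,+},\dots,\Psi_{n,+}](z)\ne 0$ for every $z\in\overline{\mathbb D}$. If it vanished at some $z_0$, some nontrivial $\mathbb C$-linear combination $\Phi=\sum c_m\Psi_m$ would satisfy $\Phi_+(z_0)=0$, hence $\widetilde\Psi:=\Phi/(z-z_0)$ would still be a solution of the barrier problem with $\ord(\widetilde\Psi)=\ord(\Phi)-1$; replacing the largest-index $\Psi_m$ with nonzero $c_m$ by $\widetilde\Psi$ and reordering produces an admissible set of strictly smaller index, contradicting standardness. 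Invertibility of $F_-$ then follows automatically from $F=F_-DF_+$ and the already-known invertibility of $F$.

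For the converse, given $F=F_-DF_+$, set $\Psi_{m,+}$ to be the $m$-th column of $F_+^{-1}$ and $\Psi_{m,-}$ the $m$-th column of $F_-D$. The barrier equation is immediate: $F\Psi_{m,+}=F_-DF_+F_+^{-1}e_m=F_-De_m=\Psi_{m,-}$. Since $F_-$ is invertible in $\mathcal W_{\mathbb C,-}^{n\times n}$, the pole of $\Psi_{m,-}$ at $\infty$ has order exactly $k_m$, and the $\Psi_m(0)$ form a basis because $F_+^{-1}(0)$ is invertible; this gives admissibility. For completeness, given any solution $\Psi$, the vector $q:=F_+\Psi_+=DF_-^{-1}\Psi_-$ extends holomorphically into both the disk (left side) and its exterior (right side, modulo a polynomial of degree $\max k_m$), hence by Liouville is a polynomial $(q_1,\dots,q_n)$ with $\deg q_m\le k_m$, yielding $\Psi=\sum q_m\Psi_m$. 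Standardness of the resulting set follows from the exchange argument of the previous paragraph: any admissible solution set with strictly smaller index would, by the forward direction, produce a factorization of $F$ with a strictly smaller diagonal, which combined with $F=F_-DF_+$ would force a non-invertible matrix in $\mathcal W_{\mathbb C,\pm}^{n\times n}$, a contradiction. The main obstacle in both halves is precisely this exchange/reduction argument linking minimality of the index to non-vanishing of $\det F_+$; everything else reduces to bookkeeping with polynomials and the Riesz projection onto $\mathcal W_{\mathbb C,\pm}$.
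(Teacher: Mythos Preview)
The paper does not actually prove this theorem: it is stated as a classical result in the complex setting, with the proof deferred to the reference \cite{cg} (Clancy--Gohberg). There is therefore no ``paper's own proof'' to compare against. Your sketch follows essentially the standard textbook argument found in that reference, and the overall architecture---barrier relation as dictionary, exchange/reduction argument for minimality, Liouville-type argument for completeness---is correct.

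Two small points worth tightening. First, in the completeness argument you write $q=F_+\Psi_+=DF_-^{-1}\Psi_-$; from $F_-DF_+\Psi_+=\Psi_-$ one gets $F_+\Psi_+=D^{-1}F_-^{-1}\Psi_-$, not $DF_-^{-1}\Psi_-$. This does not affect the conclusion that each $q_m$ is a polynomial, but the degree bound you state ($\deg q_m\le k_m$) is not the one that falls out and is in any case not needed for completeness. Second, in the invertibility step you claim $\det[\Psi_{1,+},\dots,\Psi_{n,+}](z)\ne 0$ for all $z\in\overline{\mathbb D}$ by the divide-by-$(z-z_0)$ trick. This works cleanly for $|z_0|<1$, but if $|z_0|=1$ then $(z-z_0)^{-1}\notin\mathcal W_{\mathbb C}$ and the quotient $\widetilde\Psi$ need not lie in the required space. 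The standard fix is to first run your reduction for the open disk, do the symmetric reduction for $F_-$ on $|z|>1$, and then use the relation $\det F\cdot\det[\Psi_+]=\det F_-\cdot\det D$ on $\partial\mathbb D$ together with invertibility of $F$ to rule out boundary zeros.
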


\begin{corollary}
Any standard solution set of eq. (3.2) (eq. (3.4)) is complete.
\end{corollary}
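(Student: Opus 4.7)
The plan is to chain together the two directions of Theorem 3.9. Let $\{\Psi_1,\dots,\Psi_n\}$ be an arbitrary standard solution set of the barrier problem (3.2); the case of (3.4) is entirely analogous after replacing $z$ by $(t-i)/(t+i)$ throughout. The first half of Theorem 3.9 converts this solution set into an explicit Wiener--Hopf factorization $F=F_-DF_+$, via the formulas stated there for $F_+$, $F_-$ and $D$. I would then turn around and feed this freshly obtained factorization into the converse direction of the very same theorem: that direction guarantees that the solution set one reads off from $F_+^{-1}$ and $F_-D$ is automatically complete (and standard).

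What remains is the one-line check that the solution set produced by the converse direction is literally $\{\Psi_1,\dots,\Psi_n\}$ itself, not some other standard set. By construction $F_+^{-1}=[\Psi_{1,+},\dots,\Psi_{n,+}]$ column by column, so the ``plus parts'' agree on the nose. For the ``minus parts'', the $k$-th column of $F_-D$ equals $z^{-\ord(\Psi_k)}\Psi_{k,-}(z)\cdot z^{\ord(\Psi_k)}=\Psi_{k,-}(z)$, because the factor $z^{-\ord(\Psi_k)}$ built into the $k$-th column of $F_-$ cancels exactly against the $k$-th diagonal entry of $D$. Hence the completeness assertion supplied by the converse direction of Theorem 3.9 transfers verbatim to the original $\{\Psi_1,\dots,\Psi_n\}$, which is precisely the statement of the corollary.

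There is essentially no obstacle here: the corollary is a purely formal consequence of Theorem 3.9 being stated as a genuine two-sided correspondence rather than as two independent constructions. The only point that requires any care is to confirm that the composition ``standard solution set $\to$ factorization $\to$ solution set'' is the identity, as opposed to producing some different admissible set of equal index; and this is immediate from the explicit formulas exhibited in Theorem 3.9.
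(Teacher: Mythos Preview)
Your argument is correct and is exactly the intended one: the paper states this corollary without proof, as an immediate consequence of the two directions of the preceding theorem (Theorem~3.8 in the paper's numbering, not 3.9), and your composition ``standard set $\to$ factorization $\to$ the same set, now certified complete'' is precisely how one unpacks that. The only quibble is the label: you cite Theorem~3.9, but in the paper the relevant result is Theorem~3.8.
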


Note that Theorem 3.8 remains valid if we choose a different order of indices along the diagonal $D$, as long as we use the respective order for the indices of the solution set. In this case, a standard solution set is defined with respect to the corresponding lexicographic order. By $s_1 \rightarrow s_2 \rightarrow s_3 \rightarrow \dots \rightarrow s_n$ we refer to the lexicographic order first determined by comparing the $s_1$-th entries of two index sets, then their $s_2$-th entries, and so on until the entries are not equal.

Rather than immediately extending these results to the quaternionic setting, we use them for the proof of the following theorem regarding the existence of factorization:

\begin{theorem}
Let $F\in\mathcal W_{\mathbb H}^{n\times n}$ ($F\in\mathcal W(\mathbb S\mathbb R,\mathbb H^{n\times n})$) be invertible. Then it admits a factorization and $D$ is unique.  If $n=1$, then $F_+,F_-$ are unique up to the transformation $c\star F_+,F_-c^{-1 }$, where $c\in\mathbb H$ is a non-zero constant.
\end{theorem}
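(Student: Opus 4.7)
My plan is to reduce to the complex Wiener-Hopf theory via the map $\omega$, then descend the resulting factorization to the quaternionic setting by exploiting the $J_n$-symmetry characterizing the image of $\omega$. I treat the discrete case; the continuous case is entirely analogous. By Corollary 2.19, invertibility of $F$ is equivalent to that of $\omega(F)\in\mathcal{W}_{\mathbb{C}}^{2n\times 2n}$, so the complex Wiener-Hopf theorem (Theorem 3.8 applied to $\omega(F)$) yields a factorization $\omega(F)=G_-D^{\mathbb{C}}G_+$ with $D^{\mathbb{C}}(z)=\mathrm{diag}[z^{m_1},\dots,z^{m_{2n}}]$ and $m_1\geq\dots\geq m_{2n}$.

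The crux of the proof is to show that the complex partial indices double up, i.e.\ $m_{2i-1}=m_{2i}$ for each $i$; after a permutation of the diagonal entries this is equivalent to $D^{\mathbb{C}}=\omega(D^{\mathbb{H}})$ for a quaternionic diagonal $D^{\mathbb{H}}=\mathrm{diag}[p^{k_1},\dots,p^{k_n}]$, $k_i:=m_{2i-1}$. To establish the pairing I would use the identity $J_n\overline{\omega(F)(z^{-1})}J_n^T=\omega(F)(z)$ (which characterizes the image of $\omega$) to show that the $\mathbb{C}$-antilinear map $\sigma'(\Psi)(z):=J_n\overline{\Psi(z^{-1})}$ carries solutions of the barrier problem $\Psi_-=\omega(F)\Psi_+$ to solutions. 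A short computation gives $(\sigma')^2=-\mathrm{Id}$, and since $\sigma'$ sends the polynomial part $r(z)$ of $\Psi_-$ to $J_n\bar r(z)$ (a polynomial of the same degree), it preserves $\mathrm{ord}(\Psi)$. Consequently, for every integer $k$ the finite-dimensional complex vector space $V_k:=\{\Psi:\mathrm{ord}(\Psi)\leq k\}$ carries a $\mathbb{C}$-antilinear operator squaring to $-\mathrm{Id}$, which endows it with a right $\mathbb{H}$-module structure and forces $\dim_{\mathbb{C}}V_k$ to be even. On the other hand, if $\{\Psi_1,\dots,\Psi_{2n}\}$ is a complete standard solution set (which exists by the preceding corollary), the $\Psi_i$ form a free $\mathbb{C}[z]$-basis for the solution space, so $\dim_{\mathbb{C}}V_k=\sum_{i:\,m_i\leq k}(k-m_i+1)$; a short induction on the distinct values of the $m_i$ (starting from $k=m_{2n}$) then shows that evenness of all these sums forces every partial index to appear with even multiplicity, which is the pairing.

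With the pairing established I would rearrange $\omega(F)=G_-D^{\mathbb{C}}G_+$ by a permutation matrix $P$ to $\omega(F)=(G_-P^T)\omega(D^{\mathbb{H}})(PG_+)$. Since $\omega(D^{\mathbb{H}})$ is $\sigma$-invariant (for $\sigma(H)(z):=J_n\overline{H(z^{-1})}J_n^T$), applying $\sigma$ produces a second factorization with the same diagonal, and the two differ by an admissible transformation taken from the centralizer of $\omega(D^{\mathbb{H}})$ in the appropriate subalgebras; using this freedom one selects a $\sigma$-invariant representative, whose factors lie in the image of $\omega$ and descend to $F_\pm\in\mathcal{W}_{\mathbb{H},\pm}^{n\times n}$. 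Uniqueness of $D^{\mathbb{H}}$ is inherited from uniqueness of the $m_i$. For $n=1$: if $F=F_-^{(1)}\star D\star F_+^{(1)}=F_-^{(2)}\star D\star F_+^{(2)}$, then $(F_-^{(1)})^{-\star}\star F_-^{(2)}\in\mathcal{W}_{\mathbb{H},-}$ equals $D\star F_+^{(1)}\star (F_+^{(2)})^{-\star}\star D^{-\star}\in\mathcal{W}_{\mathbb{H},+}$, and by a Liouville-type argument the common value is a nonzero quaternion constant $c$, which yields the stated transformation.

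The main obstacle will be Step 2, the pairing argument: making precise the quaternionic structure induced on the solution space of the complex barrier problem, and verifying that the dimension formula really does force even multiplicities at every $k$. The descent to $\omega$-invariant factors in Step 3 is a relatively mechanical manipulation using the centralizer structure once the pairing is in hand.
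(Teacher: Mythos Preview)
Your overall strategy---pass to $\omega(F)$, exploit the $J_n$-symmetry of the complex barrier problem, and descend---is exactly the paper's. But two points deserve comment.

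First, a slip in the symmetry map: the identity characterizing the image of $\omega$ is $J_n\overline{\omega(F)(\bar z)}J_n^{T}=\omega(F)(z)$, not with $z^{-1}$; your $\sigma'(\Psi)(z)=J_n\overline{\Psi(z^{-1})}$ is anti\-holomorphic and swaps interior with exterior. Your subsequent claims (that $\sigma'$ sends the polynomial part $r(z)$ to $J_n\bar r(z)$ and preserves $\mathrm{ord}$) are only correct for $\sigma'(\Psi)(z)=J_n\overline{\Psi(\bar z)}$, so this is presumably what you intended. With that fix, your Step~2 is a clean and genuinely different argument: the paper never isolates the pairing of partial indices as a lemma, whereas you deduce it by observing that each $V_k$ carries a $\mathbb C$-antilinear operator squaring to $-\mathrm{Id}$, hence has even complex dimension, and then reading off even multiplicities from the dimension formula. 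This is more conceptual than the paper's route.

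Second, you underestimate Step~3. The paper does \emph{not} separate pairing from descent; instead it runs a single inductive replacement: starting from a complex standard solution set $\{\Psi_1,\dots,\Psi_{2n}\}$, it successively swaps suitable $\Psi_l$ for $\tilde\Psi_m:=J_n\overline{\Psi_m(\bar z)}$, at each stage proving linear independence at $0$ via the identity $J_n\overline{\tilde\Psi_m(0)}=-\Psi_m(0)$ (which forces $|\alpha|^2=-1$ in any purported dependence). This constructive argument is where the real work sits. Your alternative---``use the centralizer freedom to select a $\sigma$-invariant representative''---is not mechanical: it amounts to a Hilbert~90 statement for the twisted $\mathbb Z/2$-action on the group of admissible transformations of a Wiener--Hopf factorization, a group that is block upper-triangular with polynomial off-diagonal blocks when indices repeat. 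For constant matrices one can write down a coboundary (e.g.\ $N=N_0+M^{-1}\sigma(N_0)$ for generic $N_0$), but extending this to the full centralizer with its function-valued blocks needs a genuine argument. Either supply that, or---more economically---once your pairing is established, run the paper's replacement induction for the descent. Your $n=1$ uniqueness argument via Liouville matches the paper's.
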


\begin{proof}
Fix $i \perp j \in \mathbb S$. We will prove uniqueness first: If a factorization exists with an index set $k_1\geq k_2\geq\dots\geq k_n$ , then it induces a factorization 
$$
\omega(F)=\omega(F_-)\omega(D)\omega(F_+),
$$
where
$$
\omega(D)(z)=\begin{bmatrix}
    D(z) &0 \\
    0 &D(z)
\end{bmatrix} \quad
\left(\omega(D)(t)=\begin{bmatrix}
    D(it) &0 \\
    0 &D(it)
\end{bmatrix}\right)
$$
is a diagonal matrix in $\mathcal W^{n\times n}$ ($\mathcal W(\mathbb R,\mathbb C^{n\times n})$). It is known that (up to the order of indices), the diagonal element of a factorizable matrix function in $\mathcal W^{n\times n}$ ($\mathcal W(\mathbb R,\mathbb C^{n\times n})$) is unique. Thus $D$ is unique. If $n=1$, then 
$$
D(p)=p^k \left(D(p)=\left(\dfrac{p+1}{p-1}\right)^k\right)
$$
$\star$-commutes with any matrix (note that 
$$\dfrac{it+1}{it-1}=1-2\int_{0}^{\infty}e^{itu}e^{-u}du,
$$ 
so the components are real). Thus, if we have two factorizations
$$
F_{-}\star p^k \star F_{+}=G_{-}\star p^k \star G_{+} \quad \left(F_{-}\star \left(\dfrac{p+1}{p-1}\right)^k \star F_{+}=G_{-}\star \left(\dfrac{p+1}{p-1}\right)^k \star G_{+}\right),
$$
then
$$
G_{-}^{-\star}\star F_{-}=G_{+}\star F_{+}^{-\star}
$$
must be a constant $c \in \mathbb H$ since it is in 
$$
\mathcal W_{\mathbb H,+}^{n\times n} \cap \mathcal W_{\mathbb H,-}^{n\times n} \quad (\mathcal W_+(\mathbb S\mathbb R,\mathbb H^{n\times n}) \cap \mathcal W_-(\mathbb S\mathbb R,\mathbb H^{n\times n}),
$$ 
meaning that it has a bounded, slice hyperholomorphic continuation to $\mathbb H$ (rendering it constant by the quaternionic version of Liouville's theorem). $c\neq 0$ since $G_{-}^{-\star}\star F_{-}$ must be invertible. So $G_+=c\star F_{+}$ and $G_-=F_-c^{-1}$, and it is obvious that any $\neq 0$ satisfies the factorization.\par
Existence: Denote $F'(z)=\omega(F)(z)$ ($F'(t)=\omega(F)(t)$). It admits a factorization $F'=F'_-D'F'_+$, where the indices are ordered as follows: 
$$
k_1\geq k_{n+1}\geq k_2\geq k_{n+2}\geq \dots \geq k_n \geq k_{2n}.
$$ 
Let us consider the set of piecewise holomorphic functions $\Psi_1,\dots,\Psi_{2n}:\mathbb C_i \rightarrow \mathbb C_i^{2n}$ given by 
$$
[\Psi_{1,+},\dots,\Psi_{2n,+}]={F'_+}^{-1}, [\Psi_{1,-},\dots,\Psi_{2n,-}]=F'_-D'.
$$ 
By Theorem 3.8, it is a complete and standard solution set for the barrier problem $\Psi_-=F'\Psi_+$ with respect to the lexicographic order 
$$
1 \rightarrow n+1 \rightarrow 2 \rightarrow n+2 \rightarrow \dots \rightarrow n \rightarrow 2n.
$$ 
Let us see that for all $1\leq m\leq n$, 
$$
\tilde{\Psi}_m(z):=J_n\overline{\Psi_m(\overline z)} \quad \left(\tilde{\Psi}_m(t):=J_n\overline{\Psi_m(-\overline t)}\right)
$$ 
is also a solution of the aforementioned barrier problem. Indeed:
\begin{equation}
\begin{split}
\forall z\in\partial\mathbb B\cap\mathbb C_i \quad F'(z)\tilde{\Psi}_{m,+}(z)&=J_n\overline{F'(\overline z)}J_n^T(J_n\overline{\Psi_{m,+}(\overline z)})\\
&=J_n\overline{F'(\overline z)}\overline{\Psi_{m,+}(\overline z)}=J_n\overline{\Psi_{m,-}(\overline z)}=\tilde{\Psi}_{m,-}(z).
\end{split}
\end{equation}
\begin{equation}
\begin{split}
\forall t\in\mathbb R \quad F'(t)\tilde{\Psi}_{m,+}(t)&=J_n\overline{F'(-t)}J_n^T(J_n\overline{\Psi_{m,+}(-t)})\\
&=J_n\overline{F'(-t)}\overline{\Psi_{m,+}(-t)}=J_n\overline{\Psi_{m,-}(-t)}=\tilde{\Psi}_{m,-}(t).
\end{split}
\end{equation}
Moreover, $\ord(\tilde{\Psi}_m)=\ord(\Psi_m)$ for every $1\leq m\leq n$, since $f(z) \mapsto \overline{f(\overline z)}$ preserves the order at $\infty$ ($f(t) \mapsto \overline{f(-\overline t)}$ preserves the order at $-i$). We will soon show that there exist $1\leq m_1,\dots,m_n\leq 2n$ such that 
$$
\{\tilde{\Psi}_{m_1},\dots,\tilde{\Psi}_{m_n},\Psi_{m_1},\dots,\Psi_{m_n}\}
$$ 
is a standard solution set (with respect to the aforementioned lexicographic order), leading to a new factorization $F'=\tilde{F'_-}D'\tilde{F'_+}$ given by
$$
[\tilde{\Psi}_{m_1,+},\dots,\tilde{\Psi}_{m_n,+},\Psi_{m_1,+},\dots,\Psi_{m_n,+}]=\tilde{F'}_+^{-1},
$$
$$
[\tilde{\Psi}_{m_1,-},\dots,\tilde{\Psi}_{m_n,-},\Psi_{m_1,-},\dots,\Psi_{m_n,-}]=\tilde{F'_-}D'.
$$
This implies that $\tilde{F'}_+^{-1}(z)$ ($z=it$) is of the form
$$
\left[
\begin{matrix}
A(z) &B(z) \\
-\overline{B(\overline z)} &\overline{A(\overline z)}
\end{matrix}
\right],
$$
and likewise for $\tilde{F'_-}D'$, while $D'$ is a diagonal matrix with indices $k_{m_1}\geq \dots \geq k_{m_n}$ along the first half of the diagonal, and the same indices along the second half (since $\ord(\tilde{\Psi}_m)=\ord(\Psi_m)$), which further implies that $\tilde{F'_-}$ is also of the above form. Thus there exist $\tilde{F_-} \in  \mathcal W_{\mathbb H,-}^{n\times n}$ ($\mathcal W_-(\mathbb S\mathbb R,\mathbb H^{n\times n})$), $\tilde{F_+} \in  \mathcal W_{\mathbb H,+}^{n\times n}$ ($\mathcal W_+(\mathbb S\mathbb R,\mathbb H^{n\times n})$) and a diagonal $D \in W_{\mathbb H}^{n\times n}$ ($\mathcal W(\mathbb S\mathbb R,\mathbb H^{n\times n})$), such that 
$$\omega(\tilde{F_-})=\tilde{F'_-},\quad \omega(\tilde{F_+})=\tilde{F'_+}, \quad \omega(D)=D'.
$$
Since $\omega(F)=\omega(\tilde{F'_+} \star D \star\tilde{ F'_+})$ it follows that $F=\tilde{F'_+} \star D \star\tilde{ F'_+}$. \par
It remains to prove the existence of the required $1\leq m_1,\dots,m_n\leq 2n$. To begin with, consider $\tilde{\Psi}_{2n}$. Since $\Psi_1,\dots,\Psi_{2n}$ form a complete and admissible set, they are linearly independent at $0$ (at $i$) and there exist (complex) polynomials $p_1,\dots,p_{2n}$ such that 
$$
\tilde{\Psi}_{2n}(z)=\sum_{s=1}^{2n}p_s(z)\Psi_s(z) \quad \left(\tilde{\Psi}_{2n}(t)=\sum_{s=1}^{2n}p_s\left(\dfrac{t-i}{t+i}\right)\Psi_s(t)\right).
$$
There exists $1\leq l_1 \leq 2n-1$ such that $\Psi_1,\dots,\Psi_{l_1-1},\tilde{\Psi}_{2n},\Psi_{l_1+1},\dots,\Psi_{2n}$ are linearly independent at $0$ (at $i$). Indeed, otherwise the vectors $\tilde{\Psi}_{2n}(0),\Psi_{2n}(0)$ ($\tilde{\Psi}_{2n}(i),\Psi_{2n}(i)$) must be linearly dependent. Since $\Psi_{2n}(0)$ ($\Psi_{2n}(i)$) is non-zero, there exists $\alpha \in \mathbb C_i$ such that
\begin{equation}
\tilde{\Psi}_{2n}(0)=\alpha\Psi_{2n}(0) \quad \left(\tilde{\Psi}_{2n}(i)=\alpha\Psi_{2n}(i)\right).
\end{equation}
Since by definition $\tilde{\Psi}_s(0)=J_n\overline{\Psi_s(0)}$ ($\tilde{\Psi}_s(i)=J_n\overline{\Psi_s(i)}$) for all $1\leq s\leq n$, we have 
$$
J_n\overline{\tilde{\Psi}_s(0)}=J_n^2\Psi_s(0)=-\Psi_s(0) \quad \left(J_n\overline{\tilde{\Psi}_s(i)}=-\Psi_s(i)\right).
$$ 
By conjugating equation (3.7) and then multiplying by $J_n$ from the left, we obtain
\begin{equation}
-\Psi_{2n}(0)=\overline\alpha\tilde{\Psi}_{2n}(0) \quad \left(-\Psi_{2n}(i)=\overline\alpha\tilde{\Psi}_{2n}(i)\right).
\end{equation}
Setting (3.7) in (3.8), we get $|\alpha|^2=-1$, which is a clear contradiction.
Now reorder $\{\Psi_1,\dots,\Psi_{l_1-1},\tilde{\Psi}_{2n},\Psi_{l_1+1},\dots,\Psi_{2n}\}$ to obtain a standard solution set 
$$
\{\Psi_{m_1},\dots,\Psi_{m_{n-1}},\\\tilde{\Psi}_{2n},\Psi_{m_{n+1}},\dots,\Psi_{m_{2n-1}},\Psi_{2n}\}.
$$ 
It is minimal since $\ord(\tilde{\Psi}_{2n})=\ord(\Psi_{2n})\leq \ord(\Psi_{l_1})$. To avoid obfuscation, we may assume $m_s=s$ for all $s\in \{1,\dots,n-1,n+1,\dots,2n\}$ without loss of generality. We proceed by induction as follows: Assume that 
$$
\{\Psi_{1},\dots,\Psi_{u-1},\tilde{\Psi}_{n+u},\dots,\tilde{\Psi}_{2n},\Psi_{n+1},\\\dots,\Psi_{2n}\}
$$ 
is a standard solution set. We want to show that 
$$
\{\Psi_{1},\dots,\Psi_{u-2},\tilde{\Psi}_{n+u-1},\\\dots,\tilde{\Psi}_{2n},\Psi_{n+1},\dots,\Psi_{2n}\}
$$
is also a standard solution set. By the completeness and linear independence of the system, it suffices to show that $\tilde{\Psi}_{n+u-1},\dots,\tilde{\Psi}_{2n},\Psi_{n+u-1},\dots,\Psi_{2n}$ are linearly independent at $0$ (at $i$), since this implies that that we may replace some $\Psi_{l_u}$ with $\tilde{\Psi}_{n+u-1}$, where $l_u \in \{1,\dots,u-1,n+1,\dots,n+u-2\}$ and without loss of generality $l_u=u-1$. By negation, let $\beta,c_1,\dots,c_{n-u+1},d_1,\dots,d_{n-u+1} \in\mathbb C_i$ be such that
\begin{equation}
\begin{split}
&\tilde{\Psi}_{n+u-1}(0)=\beta\Psi_{n+u-1}(0)+c_1\tilde{\Psi}_{n+u}(0)+\dots+c_{n-u+1}\tilde{\Psi}_{2n}(0)+d_1{\Psi}_{n+u}(0)\\
&+\dots+d_{n-u+1}{\Psi}_{2n}(0) \\
&\Bigg(\tilde{\Psi}_{n+u-1}(i)=\beta\Psi_{n+u-1}(i)+c_1\tilde{\Psi}_{n+u}(i)+\dots+c_{n-u+1}\tilde{\Psi}_{2n}(i)+d_1{\Psi}_{n+u}(i)\\
&+\dots+d_{n-u+1}{\Psi}_{2n}(i)\Bigg).
\end{split}
\end{equation}
By conjugating equation (3.9) and then multiplying by $J_n$ from the left, we obtain
\begin{equation}
\begin{split}
-\Psi_{n+u-1}(0)&=\overline\beta\tilde{\Psi}_{n+u-1}(0)+\overline{d_1}\tilde{\Psi}_{n+u}(0)+\dots+\overline{d_{n-u+1}}\tilde{\Psi}_{2n}(0)\\
&-\overline{c_1}{\Psi}_{n+u}(0)-\dots-\overline{c_{n-u+1}}{\Psi}_{2n}(0) \\
\Bigg(-\Psi_{n+u-1}(i)&=\overline\beta\tilde{\Psi}_{n+u-1}(i)+\overline{d_1}\tilde{\Psi}_{n+u}(i)+\dots+\overline{d_{n-u+1}}\tilde{\Psi}_{2n}(i)\\
&-\overline{c_1}{\Psi}_{n+u}(i)-\dots-\overline{c_{n-u+1}}{\Psi}_{2n}(i)\Bigg).
\end{split}
\end{equation}
Rewriting (3.10) and multiplying (3.9) by $-\overline\beta$ yields
\begin{equation}
\begin{split}
&\Psi_{n+u-1}(0)+\overline{d_1}\tilde{\Psi}_{n+u}(0)+\dots+\overline{d_{n-u+1}}\tilde{\Psi}_{2n}(0)-\overline{c_1}{\Psi}_{n+u}(0)-\dots\\
&-\overline{c_{n-u+1}}{\Psi}_{2n}(0)=-|\beta|^2\Psi_{n+u-1}(0)-\overline\beta(c_1\tilde{\Psi}_{n+u}(0)+\dots+c_{n-u+1}\tilde{\Psi}_{2n}(0)\\
&+d_1{\Psi}_{n+u}(0)+\dots+d_{n-u+1}{\Psi}_{2n}(0)) \\
&\Bigg(\Psi_{n+u-1}(i)+\overline{d_1}\tilde{\Psi}_{n+u}(i)+\dots+\overline{d_{n-u+1}}\tilde{\Psi}_{2n}(i)-\overline{c_1}{\Psi}_{n+u}(i)-\dots\\
&-\overline{c_{n-u+1}}{\Psi}_{2n}(i)=-|\beta|^2\Psi_{n+u-1}(i)-\overline\beta(c_1\tilde{\Psi}_{n+u}(i)+\dots+c_{n-u+1}\tilde{\Psi}_{2n}(i)\\
&+d_1{\Psi}_{n+u}(i)+\dots+d_{n-u+1}{\Psi}_{2n}(i))\Bigg).
\end{split}
\end{equation}
By the linear independence of $\tilde{\Psi}_{n+u},\dots,\tilde{\Psi}_{2n},\Psi_{n+u-1},\Psi_{n+u}\dots,\Psi_{2n}$ at $0$ (at $i$), we can equate coefficients and get $1=-|\beta|^2$, which is again a contradiction.

\end{proof}

\begin{remark}
It is known that invertibility is not sufficient for factorizability in $\mathcal APW(\mathbb R,\mathbb C^{n\times n})$ for $n\geq 2$ (although it is sufficient for $n=1$). This makes it problematic to go about a proof for the quaternionic scalar case, since it is not clear that for any invertible $F\in\mathcal APW(\mathbb S\mathbb R,\mathbb H)$ the $2\times 2$ matrix-valued function $\omega(F)$ is factorizable.
\end{remark}

We continue this section by exploring the connection between factorization and the Riemann-Hilbert problem in the quaternionic setting. Our aim is to extend Theorem 3.8 and the preceding definitions.

\begin{e-definition}
\hfill \break
\begin{enumerate}
\item[(i)] Let $F\in\mathcal W_{\mathbb H}^{n\times n}$. The associated barrier problem is to describe all piecewise slice hyperholomorphic vector functions  $\Phi(p)$ given by
\begin{equation}
  \Phi(p)=\begin{cases}
    \Phi_{+}(p), & \text{if $|p|< 1$}.\\
    \Phi_{-}(p), & \text{if $|p|>1$}.
  \end{cases}
\end{equation}
and satisfying
\begin{equation}
\forall p\in\partial \mathbb B \quad \Phi_-(p)=F(p)\star \Phi_+(p),
\end{equation}
where 
$$
\Phi_{+}\in\mathcal W_{\mathbb H,+}, \quad \Phi_{-}(p)-r(p)\in\mathcal W_{\mathbb H,-}
$$ 
and $r$ is a polynomial. We define $\ord(\Phi)$ to be the (possibly negative) order of $\infty$ as a pole of $\Phi$.
\item[(ii)] Let $F\in\mathcal W(\mathbb S\mathbb R,\mathbb H^{n\times n})$. The associated barrier problem is to describe all piecewise slice hyperholomorphic vector functions  $\Phi(p)$ given by
\begin{equation}
  \Phi(p)=\begin{cases}
    \Phi_{+}(p), & \text{if $\Rea(p)< 0$}.\\
    \Phi_{-}(p), & \text{if $\Rea(p)>0$}.
  \end{cases}
\end{equation}
and satisfying
\begin{equation}
\forall p\in\mathbb S \mathbb R \quad \Phi_-(p)=F(p)\star \Phi_+(p),
\end{equation}
where 
$$\Phi_{+}\in\mathcal W_+(\mathbb S\mathbb R,\mathbb H^{n\times n}), \quad \Phi_{-}(p)-r\left(\dfrac{p+1}{p-1}\right)\in\mathcal W_-(\mathbb S\mathbb R,\mathbb H^{n\times n})
$$ 
and $r$ is a polynomial. We define $\ord(\Phi)$ to be the (possibly negative) order of $1$ as a pole of $\Phi$.
\end{enumerate}
\end{e-definition}

\begin{e-definition}
\hfill \break
\begin{enumerate}
\item[(i)] A solution set $\{\Phi_1(p),\dots,\Phi_s(p)\}$ of the barrier problem (3.13) is called complete if every solution has a representation of the form 
$$
\sum_{1\leq k\leq s}\Phi_k(p)\star q_k(p),
$$ 
where the $q_k$ are polynomials.
\item[(ii)] A solution set $\{\Phi_1(p),\dots,\Phi_s(p)\}$ of the barrier problem (3.15) is called complete if every solution has a representation of the form 
$$
\sum_{1\leq k\leq s}\Phi_k(p)\star q_k\left(\dfrac{p+1}{p-1}\right),
$$ 
where the $q_k$ are polynomials.
\end{enumerate}
\end{e-definition}

\begin{e-definition}
A solution set $\{\Phi_1(p),\dots,\Phi_n(p)\}$ of eq. (3.13) (eq. (3.15)) is called admissible if $\Phi_1(0),\dots,\Phi_n(0)$ ($\Phi_1(-1),\dots,\Phi_n(-1)$) are linearly independent over $\mathbb H$ with respect to right multiplication, and 
$$
k_1=\ord(\Phi_1)\geq k_2=\ord(\Phi_2)\geq \dots\geq  k_n= \ord(\Phi_n).
$$
$[k_1,\dots,k_n]$ is called the index set of $\Psi$.
\end{e-definition}

\begin{proposition}
Let $F\in\mathcal W_{\mathbb H}^{n\times n}$ ($F\in\mathcal W(\mathbb S \mathbb R,\mathbb H^{n\times n})$). Then there is an integer $\alpha(F)$ such that any non-zero solution $\Psi$ of eq. (3.13) (eq. (3.15)) satisfies $\ord(\Psi)\geq \alpha(F)$.
\end{proposition}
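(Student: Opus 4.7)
The plan is to reduce the proposition to its complex analogue, Proposition 3.7, via the map $\omega=\omega_{i,j}$. Fix any orthogonal $i,j\in\mathbb S$ and let $\Phi$ be a non-zero solution of the quaternionic barrier problem associated to $F$. Writing each $\mathbb H^n$-valued coefficient of $\Phi$ as $a+bj$ with $a,b\in\mathbb C_i^n$ and applying $\chi$ column-wise produces a $2n\times 2$ matrix function $\omega(\Phi)$ whose entries lie in $\mathcal B(\mathbb R,\mathbb C_i)$ (respectively $\mathcal W_{\mathbb C}$). My aim is to check that each of the two columns of $\omega(\Phi)$ is a non-zero solution of the \emph{complex} barrier problem associated to $\omega(F)\in\mathcal W_{\mathbb C}^{2n\times 2n}$ (respectively $\mathcal W(\mathbb R,\mathbb C^{2n\times 2n})$), after which the desired lower bound comes from Proposition 3.7 applied to $\omega(F)$.

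First I would verify that $\omega$ respects the plus/minus decomposition of the barrier problem. Since $\chi$ acts coefficient-wise, $\omega$ carries $\mathcal W_{\mathbb H,\pm}^n$ into $\mathcal W_{\mathbb C,\pm}^{2n\times 2}$ and sends any $\mathbb H^n$-valued polynomial $r(p)$ to a $2n\times 2$ polynomial matrix $\omega(r)(z)$ of the same degree. In the continuous case the substitution $p=it$ gives $\mathrm{Re}(p)=-\mathrm{Im}(t)$, so the left/right half-space of $\mathbb H$ corresponds to the upper/lower half-plane of $\mathbb C_i$; and a short computation shows $(p+1)/(p-1)=(t-i)/(t+i)$ under $p=it$, so Cayley-transform polynomials in $(p+1)/(p-1)$ map to Cayley-transform polynomials in $(t-i)/(t+i)$ of the same degree. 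Combined with the multiplicativity $\omega(F\star\Phi)=\omega(F)\omega(\Phi)$ (proved exactly as in Lemmas 2.5 and 2.17), this shows that each column of $\omega(\Phi)$ satisfies the complex barrier problem for $\omega(F)$ in the form of (3.2) (respectively (3.4)).

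Next, since $\chi$ is injective, $\Phi\neq 0$ forces $\omega(\Phi)\neq 0$, so at least one column of $\omega(\Phi)$ is a non-zero complex solution. Moreover, because $\chi$ is injective on the leading coefficient of $r$, the degree $\deg r$ of the polynomial part of $\Phi_-$ equals the maximum of the degrees of the polynomial parts of the two columns of $\omega(\Phi)_-$, so
\[
\ord(\Phi)=\max_{k\in\{1,2\}}\ord\!\left(\omega(\Phi)^{(k)}\right).
\]
Applying Proposition 3.7 to $\omega(F)$ gives an integer $\alpha(\omega(F))$ such that every non-zero complex solution has order at least $\alpha(\omega(F))$. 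In particular the non-zero column(s) of $\omega(\Phi)$ satisfy this bound, hence so does the maximum. Setting $\alpha(F):=\alpha(\omega(F))$ yields $\ord(\Phi)\geq\alpha(F)$ for every non-zero $\Phi$.

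The main obstacle is essentially bookkeeping rather than analysis: articulating precisely the dictionary between the quaternionic barrier problem (piecewise slice-hyperholomorphic, pole at $\infty$ or at $1$) and the complex one (piecewise holomorphic, pole at $\infty$ or at $-i$), and checking that $\omega$ carries the $\pm$ decomposition, the polynomial parts, and the order of the pole over faithfully in both the discrete and continuous settings. Once this correspondence is pinned down, the proposition reduces immediately to the known complex result.
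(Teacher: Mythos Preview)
Your proposal is correct and follows essentially the same approach as the paper: both fix $i\perp j$, apply $\omega$ to pass to the complex barrier problem for $\omega(F)$, observe that the two columns of $\omega(\Phi)$ are complex solutions with $\ord(\Phi)=\max_{k=1,2}\ord(\omega(\Phi)e_k)$, and then invoke the complex analogue (Proposition~3.6) to obtain $\alpha(F):=\alpha(\omega(F))$. Your write-up is just more explicit about the bookkeeping (the $\pm$ decomposition, the Cayley substitution, and the non-vanishing of at least one column) that the paper leaves implicit.
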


\begin{proof}
Fix $i\perp j\in \mathbb S$. Any solution $\Psi$ of eq. (3.13) (eq. (3.15)) satisfies $\omega(\Psi_-)e_t=\omega(F)\omega(\Psi_+)e_t$ for $t=1,2$, where $e_1,e_2$ are the standard unit vectors in $\mathbb R^2$. For each $t=1,2$ we have that $\omega(\Psi)e_t$ is piecewise holomorphic as in the barrier problem (3.2) (barrier problem (3.4)). Moreover, $\max_{t=1,2}\ord(\omega(\Psi)e_t)=\ord(\Psi)$, since 
$$
\omega(p^kI_n)(z)=z^kI_{2n} \quad \left(\omega\left(\left(\dfrac{p+1}{p-1}\right)^kI_n\right)(t)=\left(\dfrac{t-i}{t+i}\right)^kI_{2n}\right)
$$ 
for all $k\in \mathbb Z$. By Proposition 3.6, there is an integer $\alpha(\omega(F))$ which is the minimal order of a solution of the barrier problem associated with $\omega(F)$. In particular, we get 
$$
\ord(\Psi)=\max_{t=1,2}\left(\ord(\omega(\Psi)e_t)\right)\geq \alpha(\omega(F)).
$$
\end{proof}

\begin{e-definition}
An admissible solution set $\{\Phi_1(p),\dots,\Phi_n(p)\}$ of eq. (3.13) (eq. (3.15)) is called standard if its index set is minimal (with respect to the lexicographic order from left to right) among all admissible solution sets.
\end{e-definition}

\begin{theorem}
Let $F\in\mathcal W_{\mathbb H}^{n\times n}$ ($F\in\mathcal W(\mathbb S \mathbb R,\mathbb H^{n\times n})$). Given any standard solution set $\{\Phi_1,\dots,\Phi_n\}$ of the barrier problem (3.13) (problem (3.15)) we can obtain a factorization $F(p)=(F_-\star D\star F_+)(p)$ by setting 
$$
F_-(p)=[p^{-\ord(\Phi_1)}\Phi_{1,-}(p),\dots,p^{-\ord(\Phi_n)}\Phi_{n,-}(p)]
$$
$$
\left(F_-(p)=[\left(\dfrac{p+1}{p-1}\right)^{-\ord(\Phi_1)}\star\Phi_{1,-}(p),\dots,\left(\dfrac{p+1}{p-1}\right)^{-\ord(\Phi_n)}\star\Phi_{n,-}(p)]\right),
$$
$$
D(p)=\diag\left[p^{\ord(\Phi_1)},\dots,p^{\ord(\Phi_n)}\right]
$$
$$
\left(D(p)=\diag\left[\left(\dfrac{p+1}{p-1}\right)^{\ord(\Phi_1)},\dots,\left(\dfrac{p+1}{p-1}\right)^{\ord(\Phi_n)}\right]\right),
$$
$$
F_+(p)=[\Phi_{1,+}(p),\dots,\Phi_{n,+}(p)]^{-\star}. 
$$
Conversely, given a factorization $F(p)=(F_-\star D\star F_+)(p)$, then the set $\{\Phi_1,\dots,\Phi_n\}$ given by 
\begin{equation*}
\begin{split}
[\Phi_{1,+}(p),\dots,\Phi_{n,+}(p)]&=F_+^{-\star}(p)\\
[\Phi_{1,-}(p),\dots,\Phi_{n,-}(p)]&=(F_-\star D)(p)
\end{split}
\end{equation*}
is a complete and standard solution set.
\end{theorem}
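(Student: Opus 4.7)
The plan is to mirror Theorem 3.8 while handling quaternionic non-commutativity via the $\star$-product, and to invoke the $\omega$-map together with Theorem 3.8 to bridge any invertibility gaps. I would prove the forward direction (standard solution set yields a factorization) first and then use it to establish the converse (factorization yields a complete and standard solution set).

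For the forward direction, define $F_\pm$ and $D$ by the displayed formulas. Reading the barrier equation column-wise gives $F \star F_+^{-\star} = F_- \star D$, so $F = F_- \star D \star F_+$ holds formally. What must be checked is that $F_+ \in \mathcal{W}_{\mathbb{H},+}^{n\times n}$ and $F_- \in \mathcal{W}_{\mathbb{H},-}^{n\times n}$ are $\star$-invertible in their respective subalgebras, which is the main obstacle. I address it by passing through $\omega$: each $\Phi_m$ gives rise to a $J_n$-conjugate pair of complex vector-valued solutions of the barrier problem for $\omega(F)$, constructed exactly as in the proof of Theorem 3.10 via the map $\Psi \mapsto J_n \overline{\Psi(\overline p)}$ (respectively its continuous analog). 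Together these yield $2n$ complex solutions. Standardness of the quaternionic set, combined with Proposition 3.15 and the inductive linear-independence computation mirroring the $|\beta|^2 = -1$ contradiction in the proof of Theorem 3.10, ensures that this enlarged complex solution set is complete and standard. Theorem 3.8 then produces a complex factorization in which the matrix built from the plus-parts is $\star$-invertible in the complex $+$-subalgebra of $\mathcal{W}^{2n\times 2n}$; its $J_n$-symmetry forces the inverse to descend via $\omega$ to a $\star$-inverse of $[\Phi_{1,+},\dots,\Phi_{n,+}] = F_+^{-\star}$ in $\mathcal{W}_{\mathbb{H},+}^{n\times n}$. The same argument applies to $F_-$.

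For the converse direction, suppose $F = F_- \star D \star F_+$ is a factorization and define the $\Phi_k$ as the $k$-th columns of $F_+^{-\star}$ and $F_- \star D$ respectively. The barrier equation is immediate from
$$
F \star \Phi_{k,+} = (F_- \star D \star F_+) \star (F_+^{-\star} e_k) = (F_- \star D) e_k = \Phi_{k,-}.
$$
Membership and regularity are routine: $\Phi_{k,+} \in \mathcal{W}_{\mathbb{H},+}^{n\times 1}$ is inherited from $F_+^{-\star}$, while the $k$-th column of $F_- \star D$ differs from something in $\mathcal{W}_{\mathbb{H},-}^{n\times 1}$ by a polynomial of degree at most $\max(k_k, 0)$ (or its Cayley-type analog in the continuous case). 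The order $\ord(\Phi_k) = k_k$ follows because $F_-$ has an invertible quaternionic value at the reference point for $\ord$, so the leading term of $\Phi_{k,-}$ is non-zero. Admissibility at $p = 0$ (respectively $p = -1$) holds because $F_+^{-\star}$ takes an invertible quaternionic value there. For completeness, given any solution $\Psi$, set $R := F_+ \star \Psi_+$: the barrier equation gives $R = D^{-\star} \star F_-^{-\star} \star \Psi_-$ on the boundary, so the two slice hyperholomorphic extensions glue to a quaternionic function with polynomial growth, hence polynomial by a Liouville-type argument, yielding $\Psi = \sum_k \Phi_k \star q_k$. Standardness follows from Theorem 3.10's uniqueness of $D$: applying the forward direction to any standard admissible solution set (which exists by Proposition 3.15 together with our construction of an admissible set) yields a factorization with the same $D$, hence the same indices as $\{\Phi_k\}$.

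The main obstacle, as noted, is verifying that the $2n$-element complex solution set built from the $\Phi_m$ and their $J_n$-conjugates is standard, not merely admissible. This requires adapting the inductive $|\beta|^2 = -1$ contradiction argument at the end of Theorem 3.10's proof. Once that step is in place, the rest of the forward direction is algebraic bookkeeping through $\omega$ and Theorem 3.8, and the converse direction is direct verification combined with the standard uniqueness argument above.
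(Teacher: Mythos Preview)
Your forward direction is essentially the paper's approach: pass to the complex problem via $\omega$, build the $2n$ complex solutions as the two columns of each $\omega(\Phi_m)$ (these are exactly the $J_n$-conjugate pairs you describe), and then invoke Theorem~3.8 once you know this $2n$-element set is standard. The paper's way of establishing that standardness is worth noting, since your sketch compresses it: the paper does \emph{not} reuse the $|\beta|^2=-1$ computation directly on the $\Psi$'s, but instead takes an arbitrary complex standard set $\{\Gamma_s\}$, applies the $J_n$-symmetrization from Theorem~3.10's proof to it, pulls the result back through $\omega$ to a quaternionic admissible set $\{\Lambda_k\}$, and then compares index sets twice (once via quaternionic minimality of $\{\Phi_k\}$, once via complex minimality of $\{\Gamma_s\}$). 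Your phrasing ``standardness of the quaternionic set, combined with \ldots the $|\beta|^2=-1$ contradiction'' points to the same ingredients, but the two-sided index comparison is the actual mechanism, and you should make it explicit.

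Your converse direction genuinely differs from the paper's. The paper again pushes everything through $\omega$: the induced complex factorization gives, by Theorem~3.8, a complete and standard $\{\Psi_s\}$, and completeness of $\{\Phi_k\}$ is read off from the complex completeness using the $J_n$-symmetry of the polynomial coefficient matrix. You instead argue intrinsically: admissibility is checked directly, completeness comes from a quaternionic Liouville/Morera argument on $R=F_+\star\Psi_+$, and standardness is deduced from the uniqueness of $D$ in Theorem~3.10. This is correct and arguably cleaner conceptually, but it costs you two analytic facts the paper sidesteps---a slice-hyperholomorphic removable-singularity (Morera-type) statement across $\partial\mathbb B$ and a polynomial-growth Liouville theorem---whereas the paper's route needs only the algebraic properties of $\omega$ and the already-proved complex Theorem~3.8.
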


\begin{proof}
Fix $i\perp j\in \mathbb S$. In the first direction, for $1\leq k\leq n$ define 
$$
\Psi_k:=\omega(\Phi_k)e_1, \Psi_{n+k}:=\omega(\Phi_k)e_2.
$$ 
We want to show that the corresponding $F_-,D,F_+$ satisfy $F=F_-\star D\star F_+$. Note that 
$$
\omega(F_-)(z)=[z^{-\ord(\Psi_1)}\Psi_{1,-}(z),\dots,z^{-\ord(\Psi_{2n})}\Psi_{2n,-}(z)]
$$
$$
\left(\omega(F_-)(t)=\left[\left(\dfrac{t-i}{t+i}\right)^{-\ord(\Psi_1)}\Psi_{1,-}(t),\dots,\left(\dfrac{t-i}{t+i}\right)^{-\ord(\Psi_{2n})}\Psi_{2n,-}(t)\right]\right),
$$
$$
\omega(D)(z)=\diag\left[z^{\ord(\Psi_1)},\dots,z^{\ord(\Psi_{2n})}\right],
$$
$$
\left(\omega(D)(t)=\diag\left[(\dfrac{t-i}{t+i})^{\ord(\Psi_1)},\dots,(\dfrac{t-i}{t+i})^{\ord(\Psi_{2n})}\right]\right),
$$
$$
\omega(F_+)=[\Psi_{1,+},\dots,\Psi_{2n,+}]^{-\star}. 
$$
By Theorem 3.8, it suffices to show that $\{\Psi_1,\dots,\Psi_{2n}\}$ is a standard solution set for the barrier problem associated with $\omega(F)$, since then it will follow that 
$$
\omega(F)=\omega(F_-)\omega(D)\omega(F_+),
$$ 
implying $F=F_-\star D\star F_+$. That $\Psi_1,\dots,\Psi_{2n}$ are linearly independent over $\mathbb C_i$ at $0$ (at $i$) follows from the relation 
$$
\chi[\Phi_1(0),\dots,\Phi_n(0)]=\omega[\Phi_1,\dots,\Phi_n](0)=[\Psi_1(0),\dots,\Psi_{2n}(0)]
$$ 
$$
(\chi[\Phi_1(i),\dots,\Phi_n(i)]=\omega[\Phi_1,\dots,\Phi_n](i)=[\Psi_1(-1),\dots,\Psi_{2n}(-1)])
$$ 
and the fact that the columns of a matrix $A\in\mathbb H^{n\times n}$ are linearly independent over $\mathbb H$ (with respect to right multiplication) if and only if the columns of $\chi(A)$ are linearly independent over $\mathbb C_i$. To show minimality of the index set, let us recall that by the proof of Theorem 3.10, any standard solution set (with respect to the lexicographic order $1\rightarrow n+1\rightarrow \dots \rightarrow n \rightarrow 2n$) $\Gamma_1,\dots,\Gamma_{2n}$ for the barrier problem associated with $\omega(F)$ leads to another standard solution set of the form $\{\tilde{\Gamma}_{m_1},\dots,\tilde{\Gamma}_{m_n},{\Gamma}_{m_1},\dots,{\Gamma}_{m_n}\}$. Thus there exist $\Lambda_1,\dots,\Lambda_n$ such that $\omega(\Lambda_k)=[\tilde{\Gamma}_{m_k} \quad {\Gamma}_{m_k}]$ for all $1\leq k\leq n$, from which it follows that $\Lambda_1,\dots,\Lambda_n$ form an admissible solution set for the barrier problem associated with $F$. If the index set of $\Phi_1,\dots,\Phi_n$ is $[k_1,\dots,k_n]$ and that of $\Lambda_1,\dots,\Lambda_n$ is $[s_1,\dots,s_n]$, then by the minimality of $[k_1,\dots,k_n]$  we have $$[k_1,\dots,k_n]\leq [s_1,\dots,s_n].$$ But the index set of $\Psi_1,\dots,\Psi_{2n}$ is $[k_1,\dots,k_n,k_1,\dots,k_n]$, while that of $\tilde{\Gamma}_{m_1},\dots,\tilde{\Gamma}_{m_n},\newline{\Gamma}_{m_1},\dots,{\Gamma}_{m_n}$ is $[s_1,\dots,s_n,s_1,\dots,s_n]$. Then $[k_1,\dots,k_n]\geq [s_1,\dots,s_n]$ due to minimality. Thus, $\{\Psi_1,\dots,\Psi_{2n}\}$ is indeed a standard solution set. \par
In the second direction, a factorization $F(p)=(F_-\star D\star F_+)(p)$ induces a factorization $\omega(F)=\omega(F_-)\omega(D)\omega(F_+)$. By Theorem 3.8,  $\Psi_1,\dots,\Psi_{2n}$ given by 
$$
[\Psi_{1,+},\dots,\Psi_{2n,+}]=\omega(F_+)^{-1}, \quad [\Psi_{1,-},\dots,\Psi_{2n,-}]=\omega(F_-)\omega(D)
$$ 
form a complete and standard solution set for the barrier problem associated with $\omega(F)$. Then $\Phi_1,\dots,\Phi_n$ given by 
$$[\Phi_{1,+},\dots,\Phi_{n,+}]=F_+^{-\star}, \quad [\Phi_{1,-},\dots,\Phi_{n,-}]=F_-\star D
$$ 
satisfy $\omega[\Phi_1,\dots,\Phi_n]=[\Psi_1,\dots,\Psi_{2n}]$. As already seen, $\Psi_1,\dots,\Psi_{2n}$ being a standard set implies that $\Phi_1,\dots,\Phi_n$ are a standard set. Completeness follows similarly: Let $\Phi$ be a solution of the barrier problem associated with $F$ and define $\Psi=\omega(\Phi)$. There exist polynomials $r_1,\dots,r_{2n}$ (with coefficients in $\mathbb C_i$) such that 
$$
\Psi(z)=[\Psi_1(z),\dots,\Psi_{2n}(z)]\left[\begin{matrix}
r_1(z) &  r_{n+1}(z) \\
\vdots &\vdots \\
r_n(z) &  r_{2n}(z) \\
-\overline{r_{n+1}(\overline z)} & \overline{r_1(\overline z)} \\
\vdots &\vdots \\
-\overline{r_{2n}(\overline z)} &\overline{r_n(\overline z)}\\
\end{matrix}\right]
$$
$$
\left(\Psi(t)=[\Psi_1(t),\dots,\Psi_{2n}(t)]\left[\begin{matrix}
r_1\left(\dfrac{t-i}{t+i}\right) & r_{n+1}\left(\dfrac{t-i}{t+i}\right) \\
\vdots &\vdots \\
r_n\left(\dfrac{t-i}{t+i}\right) & r_{2n}\left(\dfrac{t-i}{t+i}\right) \\
-\overline{r_{n+1}\left(\dfrac{-t-i}{-t+i}\right)} & \overline{r_1\left(\dfrac{-t-i}{-t+i}\right)} \\
\vdots &\vdots \\
-\overline{r_{2n}\left(\dfrac{-t-i}{-t+i}\right)} & \overline{r_n\left(\dfrac{-t-i}{-t+i}\right)} \\
\end{matrix}\right]\right).
$$
This implies that 
$$
\Phi(p)=\sum_{k=1}^n\Phi_k(p)(r_k(p)+r_{n+k}(p)j)
$$
$$
\left(\Phi(p)=\sum_{k=1}^n\Phi_k(p)\left(r_k\left(\dfrac{p+1}{p-1}\right)+r_{n+k}\left(\dfrac{p+1}{p-1}\right)j\right)\right).
$$
\end{proof}

\begin{corollary}
Any standard solution set of eq. (3.13) (eq. (3.15)) is complete.
\end{corollary}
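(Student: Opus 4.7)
The corollary is essentially a formal consequence of Theorem 3.19 applied in both directions, exactly as in the complex case (Corollary 3.9 following Theorem 3.8). The plan is to start with an arbitrary standard solution set $\{\Phi_1,\dots,\Phi_n\}$ of the barrier problem (3.13) (resp.\ (3.15)), feed it into the first direction of Theorem 3.19 to obtain a factorization $F=F_-\star D\star F_+$, and then feed this factorization into the converse direction of Theorem 3.19 to obtain a complete and standard solution set.

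The key observation which closes the loop is that the two constructions in Theorem 3.19 are mutually inverse at the level of the solution set itself, not merely at the level of the factorization. Indeed, if we set
\[
F_+(p)=[\Phi_{1,+}(p),\dots,\Phi_{n,+}(p)]^{-\star},\qquad F_-(p)=[p^{-\ord(\Phi_1)}\Phi_{1,-}(p),\dots,p^{-\ord(\Phi_n)}\Phi_{n,-}(p)],
\]
\[
D(p)=\diag\!\left[p^{\ord(\Phi_1)},\dots,p^{\ord(\Phi_n)}\right]
\]
(with the obvious replacement $p^k\leadsto \left(\tfrac{p+1}{p-1}\right)^k$ in the continuous case), then by direct inspection $F_+^{-\star}=[\Phi_{1,+},\dots,\Phi_{n,+}]$ and the columns of $F_-\star D$ are exactly $\Phi_{1,-},\dots,\Phi_{n,-}$, so the solution set produced by the converse direction of Theorem 3.19 from this factorization coincides with $\{\Phi_1,\dots,\Phi_n\}$.

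Putting the two steps together: the first direction of Theorem 3.19 requires the input set to be standard, which is our hypothesis, and therefore delivers a genuine factorization of $F$; the converse direction then asserts that the solution set associated with this factorization is complete; and the identification above shows that this solution set is precisely the one we started with. Hence $\{\Phi_1,\dots,\Phi_n\}$ itself is complete. There is no substantial obstacle here beyond verifying the trivial bookkeeping that the two constructions compose to the identity on solution sets, so no new ideas beyond Theorem 3.19 are needed.
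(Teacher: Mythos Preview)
Your proposal is correct and is exactly the argument the paper intends: the corollary is stated without proof in the paper because it follows immediately from applying both directions of Theorem 3.19, just as Corollary 3.9 follows from Theorem 3.8 in the complex case. Your bookkeeping check that the two constructions compose to the identity on solution sets (using that the diagonal entries $p^{k}$, respectively $\left(\tfrac{p+1}{p-1}\right)^{k}$, are $\star$-central) is precisely the missing verification.
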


\begin{remark}
It will be interesting to consider more general quaternionic Riemann-Hilbert problems. Which kinds of surfaces in $\mathbb H$ may work other than spheres and hyperplanes? Which kinds of algebras of slice hyperholomorphic functions? A reasonable requirement is that the surface should split $\mathbb H$ into two axially symmetric s-domains.
\end{remark}

\section{Solvability of two classes of quaternionic functional equations}

The aim is to use factorization to characterize solvability of two classes of quaternionic functional equations. See \cite{gf} for the results in the complex case.

\begin{e-definition}
\hfill \break
\begin{enumerate}
\item[(i)] Let us denote by $W_d$ the set of all operators $A$ operating in the space $L_p[(0,\infty),\mathbb H]$ ($p\geq 1$) in accordance with the formula 
\begin{equation}
(A\phi)(t)=\sum_{n=-\infty}^{\infty}a_n\phi(t-n),
\end{equation}
where the $a_n\in\mathbb H$ satisfy $\sum_{n=-\infty}^{\infty}|a_n|<\infty$ (we define $\phi(t)=0$ for $t\leq 0$). Let us associate with each operator $A\in B_d$ the symbol function 
$$
\mathcal A(p)=\sum_{n=-\infty}^{\infty}p^na_n.
$$
\item[(ii)] Let us denote by $W_c$ the set of all operators $B$ operating in the space $L_p[(0,\infty),\mathbb H]$ in accordance with the formula 
\begin{equation}
(B\phi)(t)=c\phi(t)+\int_{0}^{\infty}k(t-s)\phi(s)ds,
\end{equation}
where $c\in\mathbb H, k(t)\in L_1(\mathbb R,\mathbb H)$. Let us associate with each operator \newline $B\in W_d$ the symbol function 
$$
\mathcal B(it)=c+\int_{-\infty}^{\infty}e^{itu}k(u)du.
$$
\end{enumerate}
\end{e-definition}

\begin{proposition}
A one-to-one correspondence exists between operators in $W_d$ ($W_c$) and their symbols in $\mathcal W_{\mathbb H}$ ($W(\mathbb S\mathbb R,\mathbb H)$). It is linear (with respect to right multiplication) and multiplicative in the following sense: If $A_-,A,A_+\in W_d$ ($B_-,B,B_+\in W_c$) are such that their symbols $\mathcal A_-,\mathcal A_+$ satisfy $\mathcal A_\pm\in\mathcal W_{\mathbb H,\pm}$ ($\mathcal B_-,\mathcal B_+$ satisfy $\mathcal B_\pm\in\mathcal W_\pm(\mathbb S\mathbb R,\mathbb H)$), then the operator $A_-AA_+$  ($B_-BB_+$) has the symbol $\mathcal A_-\mathcal A\mathcal A_+$ ($\mathcal B_-\mathcal B\mathcal B_+$).
\end{proposition}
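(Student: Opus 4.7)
The plan is to handle the discrete and continuous cases in parallel, since their combinatorial structure is identical. First, the map $A \mapsto \mathcal A$ is clearly a bijection in both cases: an operator $A \in W_d$ is determined by its sequence $(a_n)_{n \in \mathbb Z}$, which is exactly the coefficient sequence of the symbol $\mathcal A \in \mathcal W_{\mathbb H}$; conversely, any absolutely summable sequence defines a bounded operator on $L_p(0,\infty)$ via (4.1). For $W_c$ the pair $(c,k) \in \mathbb H \times L_1(\mathbb R,\mathbb H)$ determines both operator and symbol, and injectivity of the Fourier transform on $L_1(\mathbb R,\mathbb H)$ (applied component-wise after fixing $i \perp j \in \mathbb S$) makes the correspondence one-to-one. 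Linearity with respect to right multiplication by $r \in \mathbb H$ is then immediate: both (4.1) and (4.2) leave $\phi$ untouched and multiply the coefficients on the right by $r$, which matches the corresponding action on the symbol.

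For multiplicativity under the hypothesis $\mathcal A_\pm \in \mathcal W_{\mathbb H,\pm}$, the decisive observation is that these support conditions neutralize the truncation at $0$ that arises from working on $L_p(0,\infty)$ rather than $L_p(\mathbb R)$. Since $\mathcal A_+$ has $a_{+,n}=0$ for $n<0$, the operator $A_+$ shifts $\phi$ only by non-negative amounts, so if $\phi$ vanishes on $(-\infty,0]$ then so does $A_+\phi$; hence $A_+\phi$ may be extended by zero to all of $\mathbb R$ without any effect on subsequent shifts. Symmetrically, $A_-$ shifts only by non-positive amounts, so when it is applied to a function already supported on $(0,\infty)$ (in particular to $AA_+\phi$) it evaluates its input only at points of $(0,\infty)$, and no truncation is encountered on the inside either.

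With the two boundary operators ``transparent'' in this sense, the composition can be unrolled for $t>0$:
\[
(A_-AA_+\phi)(t) \;=\; \sum_{m\le 0}\sum_{\ell}\sum_{n\ge 0} a_{-,m}\,a_\ell\,a_{+,n}\,\phi(t-m-\ell-n),
\]
and after grouping by $k = m+\ell+n$ the coefficient of $\phi(t-k)$ is exactly the triple convolution $\sum_{m+\ell+n=k} a_{-,m}a_\ell a_{+,n}$, which by definition is the coefficient of $p^k$ in $\mathcal A_-\star\mathcal A\star\mathcal A_+$. Absolute convergence from the Wiener norms justifies every reordering. The continuous case is formally identical: under the half-plane support conditions the composition $B_-BB_+$ produces an operator of the form (4.2) whose kernel decomposes into the expected pieces (the triple product of constants, cross-terms pairing constants with single $k$'s, and the triple convolution $k_-\circ k\circ k_+$), whose ``Fourier transform'' is precisely $\mathcal B_-\star\mathcal B\star\mathcal B_+$.

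The main obstacle is exactly what the support hypothesis is designed to handle: on $W_d$ (or $W_c$) as a whole, the symbol map is \emph{not} multiplicative, because the truncation at $0$ produces corner corrections when shifts cross the origin. The bulk of the proof is therefore the careful bookkeeping showing that $\mathcal A_\pm \in \mathcal W_{\mathbb H,\pm}$ exactly cancels these corrections; once that is done, applying Fubini and using absolute convergence is routine.
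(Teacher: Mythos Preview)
Your treatment of multiplicativity is essentially the paper's: you unroll the composition, match coefficients with the $\star$-product, and your discussion of why the support conditions $\mathcal A_\pm \in \mathcal W_{\mathbb H,\pm}$ render the half-line truncation harmless is in fact more explicit than what the paper writes.

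However, there is a genuine gap in your handling of the one-to-one correspondence. You assert that ``an operator $A \in W_d$ is determined by its sequence $(a_n)$'' and hence the map $A \mapsto \mathcal A$ is ``clearly a bijection''. But $W_d$ is by definition a set of \emph{operators} on $L_p(0,\infty)$, not a set of sequences; for the symbol map even to be well-defined you must show that two distinct summable sequences cannot produce the same operator on the half-line. This is not automatic: the very truncation at $0$ that you carefully neutralize in the multiplicativity argument is exactly what could, a priori, let different data collapse to the same operator. Likewise, in the continuous case your appeal to Fourier injectivity shows only that distinct pairs $(c,k)$ give distinct \emph{symbols}; it says nothing about whether they give distinct operators on $L_p(0,\infty)$.

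The paper devotes most of its proof to precisely this point. It applies the operator to the test functions $e_r(t) = e^{-rt}$ for $t>0$ and explicitly reconstructs the data: in the discrete case the one-sided derivative jumps of $Ae_r$ at integers recover $a_n$ for $n \ge 1$, and then a slice-holomorphy argument (a power series vanishing on $(0,1)$) forces $a_n = 0$ for $n \le 0$; in the continuous case one differentiates $Be_r$ to recover $k$ on $(0,\infty)$, sends $r \to \infty$ with Riemann--Lebesgue to isolate $c$, and finally uses injectivity of the Laplace transform to recover $k$ on $(-\infty,0)$. Once you supply this injectivity argument your proof is complete.
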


\begin{proof}
The correspondence is well-defined, since an operator $A\in W_d$ \newline ($B\in W_c$)  is uniquely determined by $\{a_j\}_{j\in\mathbb Z}$ ($c\in\mathbb H$ and $k\in\L_1(\mathbb R,\mathbb H)$). Indeed, taking the function (for $r>0$)
$$
e_r(t)=\begin{cases}
    e^{-rt}, & \text{if $t>0$}\\
    0, & \text{if $t\leq0$}
  \end{cases},
$$
we get 
$$
(Ae)(t)=\sum_{n=-\infty}^{\infty}a_ne_r(t-n)
$$ 
for $t>0$. Then $Ae$ is piecewise differentiable with one-sided derivatives at $\mathbb N$ satisfying 
$$
a_n=-\dfrac{1}{r}((Ae_r)'(n^+)-(Ae_r)'(n^-))
$$ 
for all $n\in\mathbb N$. Assuming $A=0$, we get $a_n=0$ for $n\in\mathbb N$ and $$\sum_{n=-\infty}^{0}a_ne^{-r(t-n)}=0 \quad \forall r>0.$$ Equivalently, the power series $\sum_{m=0}^{\infty}p^ma_{-m}$ (which defines a slice hyperholomorphic function on $\mathbb B$) vanishes for $p\in (0,1)$, implying $a_{-m}=0$ for all $m\geq 0$. In the continuous case we have for $t>0$
$$
(Be_r)(t)=ce_r(t)+\int_{-\infty}^{t}k(s)e_r(t-s)ds=ce^{-rt}+e^{-rt}\int_{-\infty}^{t}k(s)e^{rs}ds,
$$
which implies that $Be_r$ is differentiable almost everywhere and satisfies
$$
k(t)=(Be_r)'(t)+r(Be_r)(t).
$$
Thus, if $B=0$, then $k(t)=0$ for $t>0$ and
$$
c+\int_{-\infty}^{0}k(s)e^{rs}ds=0
$$
for all $r>0$. Taking $r\rightarrow\infty$, the Riemann-Lebesgue lemma gives us $c=0$. Then the Laplace transform of $\tilde{k}(t)=k(-t)\max\{0,t\}$ vanishes on $(0,\infty)$, which implies $k(t)=0$ for $t\leq 0$. \newline
$A\mapsto\mathcal A$ is a bijection, since $\{a_j\}_{j\in\mathbb Z}$ uniquely determines a function in $\mathcal W_{\mathbb H}$. Likewise, $B\mapsto\mathcal B$ is a bijection since $c\in\mathbb H$ and $k\in\L_1(\mathbb R,\mathbb H)$ uniquely determine a function in $\mathcal W(\mathbb S\mathbb R,\mathbb H)$. Linearity of the correspondence is obvious, so it remains to check multiplicativity (in the sense defined). In the discrete case, we write
$$
(A_-\phi)(t)=\sum_{m=-\infty}^{0}b_m\phi(t-m) \thinspace , \thinspace (A_+\phi)(t)=\sum_{l=0}^{\infty}c_l\phi(t-l),
$$
and get
$$
(A_-AA_+)(\phi)(t)=\sum_{m=-\infty}^{0}\sum_{n=-\infty}^{\infty}\sum_{l=0}^{\infty}b_ma_nc_l\phi(t-l-n-m),
$$
for $t>0$, since for $l\geq0$ the $n$-shift of the (right) $l$-shift is the same as the $(n+l)$-shift, and for $m\leq 0$ the (left) $m$-shift of the $n$-shift is the same as the $(m+n)$-shift. Thus the corresponding symbol is
$$
\sum_{m=-\infty}^{0}\sum_{n=-\infty}^{\infty}\sum_{l=0}^{\infty}p^{m+n+l}b_ma_nc_l=\mathcal A_-\star\mathcal A\star\mathcal A_+.\\
$$
The continuous case is checked similarly.
\end{proof}

By the results of the preceding section (looking at the scalar-valued case), an invertible function in $\mathcal W_{\mathbb H}$ admits a factorization, as does an invertible function in $\mathcal W(\mathbb S\mathbb R,\mathbb H)$. While an operator is not necessarily invertible if its symbol is (due to the lack of general multiplicativity), Proposition 4.2 shows that it is invertible if its symbol is invertible in $\mathcal W_{\mathbb H,\pm}$ in the discrete case, or in $\mathcal W_{\pm}(\mathbb S\mathbb R,\mathbb H)$ in the continuous case.  The factorization $\mathcal A(p)=\mathcal A_-\star p^k\star\mathcal A_+$ induces the factorization $A=A_-U^{(k)}A_+$, where $U^{(k)}$ is the $k$-shift operator whose symbol is $p^k$. More precisely, for $k\geq0$ we have $U^{(k)}=U^k$, where $U$ is the right shift operator given by
$$
(U\phi)(t)=\begin{cases}
\phi(t-1) & \text{if $t>1$}\\
0 & \text{if $t\leq1$}
\end{cases}.
$$
For $k<0$ we have $U^{(k)}=(U^{(-1)})^{-k}$, where $U^{(-1)}$ is the left shift operator given by 
$$
(U^{(-1)}\phi)(t)=\begin{cases}
\phi(t+1) & \text{if $t>0$}\\
0 & \text{if $t\leq0$}
\end{cases}.
$$
$U^n$ is left invertible for $n\in\mathbb N$, satisfying $U^{(-n)}U^n=I$.\newline
Similarly, the factorization $\mathcal B(p)=\mathcal B_-\star \left(\dfrac{p+1}{p-1}\right)^m\star\mathcal B_+$ induces a factorization $B=B_-V^{(m)}B_+$, where
$$
(V\phi)(t)=\phi(t)-2\int_{0}^{t}e^{s-t}\phi(s)ds,\quad (V^{(-1)}\phi)(t)=\phi(t)-2\int_{t}^{\infty}e^{t-s}\phi(s)ds,
$$
and 
$$
V^{(m)}=\begin{cases}
V^m & \text{if $m\geq0$}\\
(V^{(-1)})^{-m} & \text{if $m<0$}
\end{cases}.
$$
The symbol of $V^{(m)}$ is $\left(\dfrac{p+1}{p-1}\right)^m$, since it is known to be $\left(\dfrac{t-i}{t+i}\right)^m$ in the complex case, and $p=it$. The operator $V^n$ is left invertible for $n\in\mathbb N$, satisfying $V^{(-n)}V^n=I$.

\begin{theorem}
Let $A\in W_d$ be such that the symbol is invertible and (thus) admits a factorization $\mathcal A(p)=\mathcal A_-(p)\star p^k\star\mathcal A_+(p)$. Then there are three cases:
\begin{enumerate} 
\item[(i)] If $k>0$, then $A$ is left invertible and the equation $A\psi=g$ is solvable if and only if the function $A_-^{-1}g$ vanishes on the segment $(0,k]$, where $A_-$ is the operator whose symbol is $\mathcal A_-$.
\item[(ii)] If $k=0$, then $A$ is invertible and thus the equation $A\psi=g$ always admits a unique solution.
\item[(iii)] If $k<0$, then $A$ is right invertible and every solution $\psi\in L_p[(0,\infty),\mathbb H]$ of the homogeneous equation $A\psi=0$ has the form $\psi=A_+^{-1}g$, where $g\in L_p[(0,\infty),\mathbb H]$ vanishes on $(-k,\infty)$ and $A_+$ is the operator whose symbol is $\mathcal A_+$.
\end{enumerate}
\end{theorem}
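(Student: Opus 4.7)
The plan is to lift the symbol-level factorization $\mathcal{A} = \mathcal{A}_- \star p^k \star \mathcal{A}_+$ to an operator factorization $A = A_- \, U^{(k)} \, A_+$ via Proposition 4.2, and then reduce the three cases of the theorem to corresponding statements about the elementary shift operator $U^{(k)}$ on $L_p[(0,\infty),\mathbb{H}]$. Writing $U^{(k)}$ for the operator whose symbol is $p^k$ (so $U^{(k)} = U^k$ for $k\geq 0$ and $U^{(k)} = (U^{(-1)})^{-k}$ for $k < 0$), Proposition 4.2 applied with the admissible triple whose symbols lie in $\mathcal{W}_{\mathbb{H},-}$, $\mathcal{W}_{\mathbb{H}}$, $\mathcal{W}_{\mathbb{H},+}$ respectively yields $A = A_- U^{(k)} A_+$.

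The next step is to argue that $A_\pm$ are genuinely invertible as bounded operators on $L_p[(0,\infty),\mathbb{H}]$, not merely that their symbols are invertible in $\mathcal{W}_{\mathbb{H}}$. Let $\mathcal{B}_\pm := \mathcal{A}_\pm^{-\star}$, which by Corollary 2.20 (applied to the one-sided scalar subalgebras, which are closed under inversion exactly as in the proof of Corollary 2.9) lie again in $\mathcal{W}_{\mathbb{H},\pm}$; let $B_\pm$ be the corresponding operators. Applying Proposition 4.2 with $A_- = I$ gives $B_+ A_+ = I$, and the analogous direct computation (using that a composition of two right-sided shift series is again a right-sided shift series whose coefficients are the $\star$-convolution) gives $A_+ B_+ = I$; the symmetric argument works for $A_-$. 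Hence $A_\pm^{-1}$ exist and are bounded, and the equation $A\psi = g$ is equivalent to $U^{(k)}\tilde\psi = \tilde g$ with $\tilde\psi := A_+\psi$ and $\tilde g := A_-^{-1} g$.

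At this point the three cases are pure properties of the shift. For $k > 0$, $U^k$ is injective with left inverse $U^{(-k)}$ and range $\{h \in L_p : h|_{(0,k]} = 0\}$, so $A$ inherits a left inverse $A_+^{-1} U^{(-k)} A_-^{-1}$, and $A\psi = g$ is solvable iff $A_-^{-1} g$ vanishes on $(0,k]$. For $k = 0$, $U^{(0)} = I$ and the factorization immediately gives $A^{-1} = A_+^{-1} A_-^{-1}$. For $k < 0$, $U^{(k)}$ is surjective with right inverse $U^{(-k)} = U^{|k|}$, and its kernel is exactly $\{h \in L_p : h|_{(|k|,\infty)} = 0\}$; tracing this back through $\tilde\psi = A_+\psi$ gives the claimed parametrization of $\ker A$ by functions $g$ vanishing on $(-k,\infty)$.

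The only real content beyond bookkeeping is the invertibility of $A_\pm$ on $L_p$ and the identification of the kernel/range of $U^{(k)}$; the rest is algebra. The point I expect to require the most care is the second step above, namely making sure that Proposition 4.2, which is phrased as a one-directional multiplicativity statement for triples with prescribed one-sided structure, actually delivers a two-sided operator inverse for each of $A_-$ and $A_+$ separately, rather than only showing that the symbol composition is trivial.
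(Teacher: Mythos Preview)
Your proposal is correct and follows essentially the same route as the paper: lift the symbol factorization to the operator factorization $A=A_-U^{(k)}A_+$ via Proposition~4.2, observe that $A_\pm$ are invertible on $L_p$ because their symbols are invertible within the one-sided subalgebras $\mathcal W_{\mathbb H,\pm}$, and then reduce each case to the elementary range/kernel description of the shift $U^{(k)}$. The point you flag as delicate---two-sided invertibility of $A_\pm$---is exactly what the paper handles (in the paragraph preceding the theorem) by noting that $\mathcal A_\pm^{-\star}\in\mathcal W_{\mathbb H,\pm}$ and applying Proposition~4.2 again, which is precisely your argument.
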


\begin{proof}
Going case by case:
\begin{enumerate} 
\item[(i)] If $k>0$, then $U^k$ is left invertible and since $A_-,A_+$ are invertible, it follows that $A=A_-U^kA_+$ is left invertible. The equation $A\psi=g$ is equivalent to $U^kA_+f=A_-^{-1}g$. The image of $U^k$ (thus of $U_kA_+$) is the set of all functions in $L_p((0,\infty),\mathbb H)$ that vanish on $(0,k]$. So $A\psi=g$ is solvable if and only if $A_-^{-1}g$ vanishes on $(0,k]$. \newline
\item[(ii)] If $k=0$, then $A=A_-A_+$ is invertible as the composition of invertible operators. \newline
\item[(iii)] If $k<0$, then $U^{(k)}$ being right invertible implies that $A$ is right invertible. The equation $A\psi=0$ is equivalent to $U^{(k)}\phi$=0, where $\phi=A_+\psi$. Since $U_k$ is a (left) $k$-shift, $U^{(k)}\phi=0$ if and only if $\phi=A_+\psi$ vanishes on $(-k,\infty)$.
\end{enumerate}
\end{proof}

\begin{theorem}
Let $B\in W_c$ be such that the symbol is invertible and (thus) admits a factorization 
$$
\mathcal B(p)=\mathcal B_-(p)\star \left(\dfrac{p+1}{p-1}\right)^m\star\mathcal B_+(p).
$$ 
Then there are three cases:
\begin{enumerate} 
\item[(i)] If $m>0$, then $B$ is left invertible and the equation $B\psi=g$ is solvable if and only if the function $B_-^{-1}g$ satisfies 
$$
\int_{0}^{\infty}(B_-^{-1}g)(t)t^ke^{-t}dt=0 \quad (k=0,1,\dots,m-1),
$$
where $B_-$ is the operator whose symbol is $\mathcal B_-$.
\item[(ii)] If $m=0$, then $B$ is invertible and thus the equation $B\psi=g$ always admits a unique solution.
\item[(iii)] If $m<0$, then $B$ is right invertible and every solution $\psi\in L_p[(0,\infty),\mathbb H]$ of the homogeneous equation $B\psi=0$ has the form 
$$
\psi(t)=B_+^{-1}\left(\sum_{j=0}^{-m-1}t^je^{-t}c_j\right),
$$ 
where $B_+$ is the operator whose symbol is $\mathcal B_+$ and the $c_j$ are arbitrary quaternions.
\end{enumerate}
\end{theorem}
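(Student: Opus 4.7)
The plan is to mimic the proof of Theorem 4.3, replacing the shift operator $U^{(k)}$ by the operator $V^{(m)}$ and importing the kernel and cokernel structure of the powers of $V$ and $V^{(-1)}$ from their complex analogues in \cite{gf}. First, by Proposition 4.2, the symbol factorization $\mathcal{B} = \mathcal{B}_- \star \left(\frac{p+1}{p-1}\right)^m \star \mathcal{B}_+$ induces the operator identity $B = B_- V^{(m)} B_+$; since $\mathcal{B}_\pm \in \mathcal{W}_\pm(\mathbb{S}\mathbb{R},\mathbb{H})$ are invertible in their respective subalgebras, the associated operators $B_\pm$ on $L_p[(0,\infty),\mathbb{H}]$ are themselves invertible, with inverses having symbols $\mathcal{B}_\pm^{-\star}$. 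Every question about $B$ thus reduces to the corresponding question about $V^{(m)}$.

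Next, I would collect three facts about the powers of $V$ and $V^{(-1)}$ acting on $L_p[(0,\infty),\mathbb{H}]$. Since these are real-linear integral operators with scalar kernels that commute with the right action of $\mathbb{H}$, each statement transfers componentwise from the complex case. The facts are: (a) $V$ is injective, as one verifies by differentiating $V\phi = 0$ to obtain $\phi' = \phi$ with $\phi(0) = 0$, so $V^n$ is left invertible for every $n \geq 1$ with left inverse $V^{(-n)}$; (b) the range of $V^m$ in $L_p[(0,\infty),\mathbb{H}]$ consists of exactly those $f$ satisfying $\int_0^\infty f(t)\,t^k e^{-t}\,dt = 0$ for $k = 0, \dots, m-1$; (c) the kernel of $V^{(-n)}$ is the right-$\mathbb{H}$-span of $\{t^j e^{-t} : j = 0, \dots, n-1\}$, obtained by iterating the observation that $V^{(-1)}\phi = 0$ forces $\phi' = -\phi$, with each additional power raising the admissible polynomial degree by one. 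Fact (b) is dual to (c): the formal $L_2$-adjoint of $V$ is $V^{(-1)}$, so a function lies in the range of $V^m$ precisely when it annihilates $\ker (V^{(-1)})^m$.

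With these in hand, the three cases assemble exactly as in Theorem 4.3. If $m > 0$, then $V^{(m)} = V^m$ is left invertible, hence so is $B$, and $B\psi = g$ is equivalent to $V^m(B_+\psi) = B_-^{-1}g$; by (b) this is solvable if and only if $B_-^{-1}g$ satisfies the $m$ stated moment conditions. If $m = 0$, then $B = B_- B_+$ is the composition of two invertibles. If $m < 0$, then $V^{(m)} = (V^{(-1)})^{-m}$ is right invertible, hence so is $B$, and $B\psi = 0$ is equivalent to $(V^{(-1)})^{-m}(B_+\psi) = 0$; by (c) the general solution is $\psi = B_+^{-1}\bigl(\sum_{j=0}^{-m-1} t^j e^{-t} c_j\bigr)$ with arbitrary $c_j \in \mathbb{H}$.

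The main obstacle will be confirming fact (b) directly in $L_p$ for every $p \geq 1$ rather than relying on $L_2$ duality. The cleanest route is to use the differential identity $(D+1)V\phi = (D-1)\phi$ (with $D = d/dt$), solve the resulting first-order linear ODE for $\phi$ given $V\phi = f$, and then impose both the boundary condition $\phi(0) = f(0)$ and the requirement that the $e^t$-component of the general solution vanishes at infinity so that $\phi$ lies in $L_p$; these two constraints collapse to the single moment condition $\int_0^\infty e^{-s} f(s)\,ds = 0$, and iterating the argument $m$ times produces the $m$ independent moment conditions against $\{t^k e^{-t}\}_{k=0}^{m-1}$ stated in (i).
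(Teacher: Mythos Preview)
Your proposal is correct and follows essentially the same structure as the paper: factor $B = B_- V^{(m)} B_+$ via Proposition~4.2, then reduce each case to the image/kernel description of $V^{(m)}$. The only real difference is in how that description is justified. The paper isolates it as a separate lemma (Lemma~4.5) and proves it by citing the known \emph{real}-valued result from \cite{gf} and extending componentwise via $g = g_0 + g_1 e_1 + g_2 e_2 + g_3 e_3$, exactly the mechanism you allude to when you say the facts ``transfer componentwise''. Your last paragraph instead sketches a direct ODE derivation of the range condition from the identity $(D+1)V\phi = (D-1)\phi$. Both routes are valid; the paper's is shorter and avoids any $L_2$-duality subtleties, while yours is more self-contained. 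Since you already observed the componentwise argument, the direct ODE computation is not strictly needed.
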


For the proof, we need to characterize the image of $V^m$ for $m>0$, and the kernel of $V^{(m)}$ for $m<0$.

\begin{lemma}
For $m>0$, $\Ima V^m$ is the set of all functions $g\in L_p[(0,\infty),\mathbb H]$ satisfying 
$$
\int_{0}^{\infty}g(t)t^ke^{-t}dt=0
$$ 
for $k=0,1,\dots,m-1$. For $m<0$, $\Ker V^{(m)}$ is the set of all functions of the form $\sum_{j=0}^{-m-1}t^je^{-t}c_j$, where the $c_j$ are arbitrary quaternions.
\end{lemma}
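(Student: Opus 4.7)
The approach is to factor $V$ and $V^{(-1)}$ through first-order differential operators. Setting $\psi(t)=e^{-t}\int_0^t e^s\phi(s)\,ds$, one finds $\psi(0)=0$, $\psi'+\psi=\phi$, and $V\phi=\psi'-\psi$, so $V=(\partial_t-1)(\partial_t+1)^{-1}_0$, where the subscript $0$ indicates the inverse of $\partial_t+1$ under vanishing at the origin. Symmetrically, setting $\chi(t)=e^t\int_t^\infty e^{-s}\phi(s)\,ds$ yields $V^{(-1)}=(\partial_t+1)(\partial_t-1)^{-1}_\infty$, the subscript $\infty$ denoting the decay condition at $+\infty$. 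Since $\partial_t\pm 1$ commute, we have $V^m=(\partial_t-1)^m(\partial_t+1)^{-m}_0$ and $(V^{(-1)})^n=(\partial_t+1)^n(\partial_t-1)^{-n}_\infty$.

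For $\mathrm{Im}\,V^m$, writing $g=V^m\phi$ and $\psi=(\partial_t+1)^{-m}_0\phi$ gives $g=(\partial_t-1)^m\psi$ with $\psi^{(j)}(0)=0$ for $0\leq j\leq m-1$. The key computation is the integration-by-parts identity
\[
\int_0^\infty[(\partial_t-1)\eta](t)\,t^ke^{-t}\,dt=-k\int_0^\infty\eta(t)\,t^{k-1}e^{-t}\,dt \quad (\eta(0)=0,\ k\geq 1),
\]
which follows from $(t^ke^{-t})'+t^ke^{-t}=kt^{k-1}e^{-t}$. Iterating $m$ times against $\psi$ produces the falling factorial $k(k-1)\cdots(k-m+1)$, zero for $k<m$, giving the forward inclusion. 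For the converse, given $g$ satisfying the $m$ moment conditions, I would verify that
\[
\psi(t):=-\int_t^\infty\frac{(t-s)^{m-1}}{(m-1)!}\,e^{t-s}g(s)\,ds
\]
satisfies $(\partial_t-1)^m\psi=g$ with $\psi^{(j)}(0)=0$ for $j<m$; the moment conditions are precisely what allow the usual Green's-function integral $\int_0^t(\cdots)\,ds$ to be rewritten as $-\int_t^\infty(\cdots)\,ds$, the difference being $e^t$ times a polynomial whose coefficients are the moments of $g$. Then $\phi:=(\partial_t+1)^m\psi$ lies in $L_p([0,\infty),\mathbb{H})$ by the uniform bound $|e^{t-s}|\leq 1$ on $s\geq t$ together with Young-type estimates, and satisfies $V^m\phi=g$.

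For $\mathrm{Ker}\,V^{(m)}$ with $m=-n<0$, the equation $(V^{(-1)})^n\phi=0$ reduces to $(\partial_t+1)^n\rho=0$ for $\rho:=(\partial_t-1)^{-n}_\infty\phi$, forcing $\rho\in\mathrm{span}_{\mathbb{H}}\{e^{-t},te^{-t},\dots,t^{n-1}e^{-t}\}$ (right $\mathbb{H}$-span). Recovering $\phi=(\partial_t-1)^n\rho$ and using $(\partial_t-1)(f(t)e^{-t})=(f'(t)-2f(t))e^{-t}$, we see $(\partial_t-1)^n$ acts as a triangular invertible endomorphism of that span with constant diagonal $(-2)^n$. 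Hence $\phi$ itself runs precisely over $\{\sum_{j=0}^{n-1}t^je^{-t}c_j:c_j\in\mathbb{H}\}$.

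The principal technical point is ensuring that the candidate $\phi=(\partial_t+1)^m\psi$ actually lies in $L_p([0,\infty),\mathbb{H})$ in the image direction; this is exactly where the $m$ moment conditions are forced, as without them $\psi$ picks up an $e^t$-growing piece from the Green's function. The quaternionic setting itself is transparent, since both $V$ and $V^{(-1)}$ have real kernels and hence act componentwise on the decomposition $\phi=\phi_1+\phi_2j$ with $\phi_1,\phi_2$ being $\mathbb{C}_i$-valued, reducing everything to the scalar complex case.
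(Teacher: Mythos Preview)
Your argument is correct but takes a genuinely different route from the paper. The paper's proof is essentially a two-line reduction: it observes that $V$ and $V^{(-1)}$ have real kernels, hence preserve $L_p[(0,\infty),\mathbb R]$, and then simply \emph{cites} the known characterization of $\mathrm{Im}\,V^m$ and $\mathrm{Ker}\,V^{(m)}$ over $\mathbb R$ (from Gohberg--Fel'dman). The quaternionic case follows by writing $g=g_0+g_1e_1+g_2e_2+g_3e_3$ with real $g_l$ and applying the real result componentwise. Your closing remark about the decomposition $\phi=\phi_1+\phi_2 j$ is, in effect, the paper's entire argument.

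What you have supplied instead is a self-contained proof of the underlying scalar result via the factorization $V=(\partial_t-1)(\partial_t+1)^{-1}_0$, the integration-by-parts identity, and an explicit Green's function for the converse. This buys independence from the cited literature at the cost of more work. Two points deserve care in a full write-up. First, your sentence ``since $\partial_t\pm 1$ commute'' is too quick: the bounded inverses $(\partial_t+1)^{-1}_0$ and $(\partial_t-1)^{-1}_\infty$ do not commute with $\partial_t\mp 1$ on arbitrary inputs, because the boundary condition is not preserved; the commutation \emph{does} go through in the iteration precisely because each intermediate function already vanishes at $0$ (respectively decays at $\infty$), and this should be stated. Second, the $L_p$-membership of $\phi=(\partial_t+1)^m\psi$ in the converse direction follows from writing each term $e^t f^{(k)}(t)$ as a convolution of $g$ with an $L_1$ kernel supported on $[0,\infty)$, which is what your Young-type estimate gestures at; this is routine once made explicit.
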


\begin{proof}
For $m>0$, the operator $V^m$ maps $L_p[(0,\infty),\mathbb R]$ into itself, and it is known that the restriction to $L_p[(0,\infty),\mathbb R]$ has as its image the set of all functions $g\in L_p[(0,\infty),\mathbb R]$ satisfying 
$$
\int_{0}^{\infty}g(t)t^ke^{-t}dt=0
$$ 
for $k=0,1,\dots,m-1$. For a general $g\in L_p[(0,\infty),\mathbb H]$, we may write it as 
$$
g(t)=g_0(t)+g_1(t)e_1+g_2(t)e_2+g_3(t)e_3,
$$ 
where $g_1,g_2,g_3\in L_p[(0,\infty),\mathbb R]$, and $e_1,e_2,e_3\in\mathbb S$ are fixed, pairwise orthogonal quaternions satisfying $e_3=e_1e_2$. Then we deduce
\begin{equation*}
\begin{split}
&g\in \Ima V^m \iff \forall l=0,1,2,3, \quad g_l \in V^m(L_p[(0,\infty),\mathbb R]) \\
&\iff \forall l=0,1,2,3, \quad \forall k=0,1,\dots,m-1, \quad \int_{0}^{\infty}g_l(t)t^ke^{-t}dt=0 \\
&\iff \forall k=0,1,\dots,m-1, \quad \int_{0}^{\infty}g(t)t^ke^{-t}dt=0.
\end{split}
\end{equation*}
Similarly, for $m<0$ the operator $V^{(-m)}$ maps $L_p[(0,\infty),\mathbb R]$ into itself, and it is known that the kernel of its restriction to $L_p[(0,\infty),\mathbb R]$ is $\text{Span}_{\mathbb R}\{t^je^{-t}\}_{j=0}^{-m-1}$. Thus, its kernel is $\text{Span}_{\mathbb H}\{t^je^{-t}\}_{j=0}^{-m-1}$.
\end{proof}

The lemma readily leads to the proof of Theorem 4.4:

\begin{proof}
Going case by case:
\begin{enumerate} 
\item[(i)] If $m>0$, then $V^m$ being left invertible implies that $B=B_-V^mB_+$ is left invertible. The equation $B\psi=g$ is equivalent to $V^mB_+f=B_-^{-1}g$. By the lemma, $B\psi=g$ is solvable if and only if 
$$
\int_{0}^{\infty}(B_-)^{-1}(t)t^ke^{-t}dt=0
$$ 
for all $k=0,1,\dots,m-1$.\newline
\item[(ii)] If $m=0$, then $B=B_-B_+$ is invertible. \newline
\item[(iii)] If $m<0$, then $V^{(m)}$ being right invertible implies that $B$ is right invertible. The equation $B\psi=0$ is equivalent to $V^{(m)}\phi$=0, where $\phi=B_+\psi$. By the lemma, $V^{(m)}\phi=0$ if and only if $\phi=B_+\psi$ is of the form $\sum_{j=0}^{-m-1}t^je^{-t}c_j$, where the $c_j$ are arbitrary quaternions.
\end{enumerate}
\end{proof}

\begin{remark}
To calculate the factorization index $k$ of $f\in \mathcal W_{\mathbb H}$, we may fix $i\perp j\in S$ and use the formula
$$
k=\dfrac{1}{4\pi i}\int_{\partial\mathbb B\cap\mathbb C_i}\dfrac{g'(z)}{g(z)}dz,
$$
where $g(z)=\det(\omega_{i,j}(f)(z))$. This follows from the fact that $\omega_{i,j}(f)$ has index set $[k,k]$. Similarly, to the calculate the factorization index $m$ of $h\in\mathcal W(\mathbb S\mathbb R,\mathbb H)$, we use the formula
$$
m=\dfrac{1}{4\pi}[\text{arg}(r(t))]_{-\infty}^{\infty},
$$
where $r(t)=\det(\omega_{i,j}(h)(t))$, and $\text{arg}(r(t))$ is a continuous function describing the argument of $r(t)$ in the complex plane $\mathbb C_i$.
\end{remark}

\begin{remark}
There is a more general algebra of Wiener-Hopf integro-difference operators, for which the corresponding symbols are in $\mathcal B(\mathbb S\mathbb R,\mathbb H)$. The operators are of the form
$$
(C\phi)(t)=\sum_{u\in\mathbb R}c_u\phi(t-u)+\int_{0}^{\infty}k(t-s)\phi(s)ds,
$$
where $c_u\in\mathbb H$ vanish outside a countable set. While the correspondence between an operator $C$ and its symbol is one-to-one, linear and multiplicative in a certain sense, the difficulty lies in the lack of a (proven) factorization theorem for $\mathcal B(\mathbb S\mathbb R,\mathbb H)$. The characterization of the complex case relies on the factorization for $\mathcal B(\mathbb R,\mathbb C)$. It may be possible to show that the subalgebra $\omega(\mathcal B(\mathbb S\mathbb R,\mathbb H))$ fulfills a factorization theorem, even though $\mathcal B^{2\times 2}(\mathbb R,\mathbb C)$ does not.
\end{remark}

\section{Rational matrix functions and canonical factorization}

As defined in \cite{acs}, an $n\times n$ slice hyperholomorphic rational matrix function is obtained by a finite number of addition, $\star$-multiplication and $\star$-division operations on quaternionic polynomial matrix functions. Equivalently, the function is of the form $\dfrac{1}{r(p)}Q(p)$, where $Q(p)$ is a polynomial matrix function with quaternionic coefficients, and $r$ is a polynomial with real coefficients. For brevity, we will simply use the term rational matrix functions.

\begin{proposition}
Let $F$ be a rational matrix function. Then $F$ has no poles on $\partial \mathbb B$ if and only if $F$ is in $\mathcal W_{\mathbb H}^{n\times n}$. 
\end{proposition}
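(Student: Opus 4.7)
My plan is to prove the two implications separately: the forward direction reduces to a one-line observation on absolute convergence, while the converse is the substantive content and will proceed by slicing $F$ over a choice of imaginary unit, invoking the classical complex Wiener algebra entrywise on that slice, and then using uniqueness of slice hyperholomorphic extension to reassemble the result. For the easy direction ($F\in\mathcal W_{\mathbb H}^{n\times n}$ implies no poles on $\partial\mathbb B$), I would note that the Laurent series converges absolutely and uniformly on $\partial\mathbb B$ to a continuous, and in particular bounded, matrix function; being rational, $F$ therefore cannot have a pole on $\partial\mathbb B$.

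For the non-trivial direction, I would write $F=Q/r$ with $Q$ a quaternionic polynomial matrix and $r$ a real polynomial, and fix orthogonal $i,j\in\mathbb S$. Because $r$ has real coefficients, the statement that $F$ has no poles on $\partial\mathbb B$ is equivalent to $r$ having no zeros on $\partial\mathbb B\cap\mathbb C_i$. Decomposing every coefficient of $Q$ uniquely as $a+bj$ with $a,b\in\mathbb C_i$ gives $F|_{\mathbb C_i}(z)=A(z)+B(z)j$, where $A$ and $B$ are rational matrix functions over $\mathbb C_i$ without poles on $\partial\mathbb B\cap\mathbb C_i$. I would then invoke the classical scalar-complex fact (partial-fraction decomposition, followed by geometric-series expansion of each $(z-a)^{-k}$ on the appropriate side of $|z|=1$) to produce absolutely convergent Laurent expansions $A(z)=\sum_u z^u A_u$ and $B(z)=\sum_u z^u B_u$ on $\partial\mathbb B\cap\mathbb C_i$ with $\sum_u(\|A_u\|+\|B_u\|)<\infty$. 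Setting $F_u:=A_u+B_u j\in\mathbb H^{n\times n}$ and using that right-multiplication by $j$ is an isometry of $\mathbb H^n$ together with the equivalence of the operator norm on $\mathbb H^{n\times n}$ and the operator norm of $\chi(\cdot)$ on $\mathbb C^{2n\times 2n}$, this would yield $\sum_u\|F_u\|_n<\infty$, so that $\tilde F(p):=\sum_u p^u F_u$ is an element of $\mathcal W_{\mathbb H}^{n\times n}$.

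The remaining task is to identify $F$ with $\tilde F$. By construction they already agree on an annulus $\{z\in\mathbb C_i:1-\varepsilon<|z|<1+\varepsilon\}$ for some $\varepsilon>0$. Both functions are slice hyperholomorphic on an axially symmetric s-domain containing $\partial\mathbb B$: $F$ on $\mathbb H$ minus the sphere-bundle of its poles, and $\tilde F$ on the corresponding annulus in $\mathbb H$ where the Laurent series converges. The representation formula then propagates the equality from the slice $\mathbb C_i$ to any axially symmetric s-domain on which both are defined, so $F=\tilde F\in\mathcal W_{\mathbb H}^{n\times n}$. The step I expect to be the main obstacle is precisely this last one: pinning down a common axially symmetric s-domain on which both functions are slice hyperholomorphic and invoking cleanly the identity principle (or equivalently the representation formula); by comparison, the partial-fraction bookkeeping and the norm comparison via $\chi$ should be essentially routine.
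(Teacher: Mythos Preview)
Your argument is correct, but the paper's proof is substantially shorter and takes a different route. Rather than slicing and invoking classical complex partial-fraction expansions, the paper first cancels any factors of $r$ that vanish on $\partial\mathbb B$ (arguing that such factors, being of the form $(p^2-2\mathrm{Re}(p_i)p+|p_i|^2)^{n_i}$ or $(p-\alpha_j)^{t_j}$ with real coefficients, must divide every entry of $Q$ since $F$ has no pole there), reducing to the case where $r$ does not vanish on $\partial\mathbb B$. Then the quaternionic Wiener--L\'evy theorem immediately gives $1/r\in\mathcal W_{\mathbb H}$, and multiplying by the polynomial matrix $Q$ (trivially in $\mathcal W_{\mathbb H}^{n\times n}$) finishes. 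Your approach trades this single appeal to Wiener--L\'evy for an explicit entrywise construction on a slice followed by the representation-formula identity step; this is more self-contained but considerably longer, and the identity-principle step you correctly flag as the main obstacle is simply absent from the paper's argument.

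One minor imprecision in your write-up: the assertion that ``$F$ has no poles on $\partial\mathbb B$ is equivalent to $r$ having no zeros on $\partial\mathbb B\cap\mathbb C_i$'' is false without first passing to a reduced representation (a zero of $r$ on the circle could be cancelled by $Q$). Your subsequent argument, however, only uses that $A$ and $B$ have no poles on the unit circle in $\mathbb C_i$, which follows directly from the hypothesis on $F$ and does not actually depend on that claim.
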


\begin{proof}
If $F(p)= \dfrac{1}{r(p)}Q(p)$ has no poles $\partial \mathbb B$, then without loss of generality $1/r(p)$ has no poles there. Indeed, since $r(p)$ has real coefficients, it has a factorization 
$$
r(p)=\prod_{i=1}^k(p^2-2\Rea(p_i)p+|p_i|^2)^{n_i} \prod_{j=1}^m(p-\alpha_j)^{t_j},
$$
where the $p_i$ lie on distinct spheres and the $\alpha_j$ are distinct real numbers. If any of the $p_i$ or $\alpha_j$ lies on $\partial \mathbb B$, then each entry of $Q(p)$ must be divisible by the corresponding polynomial 
$$
(p^2-2\Rea(p_i)p+|p_i|^2)^{n_i} \quad\mbox{or}\quad (p-\alpha_j)^{t_j},
$$ 
and we may divide both $Q(p)$ and $r(p)$ accordingly, getting rid of the pole. So we may assume that $r(p)$ does not vanish on $\partial \mathbb B$, which by the Wiener-L\'evy theorem, implies $1/r(p) \in \mathcal W_{\mathbb H}$ and consequently $F \in \mathcal W_{\mathbb H}^{n\times n}$. 

The other direction is obvious since functions in $\mathcal W_{\mathbb H}^{n\times n}$ are continuous on $\partial \mathbb B$.
\end{proof}

For our main theorem, we need to first show the existence of a certain type of realization for a general rational matrix function. The complex case (see \cite{bgkr}) is naturally extended as follows:

\begin{theorem}

Let $F$ be a rational $n\times n$ matrix function. Given a constant matrix $D\in\mathbb H^{n\times n}$, $F$ admits a realization of the form
\begin{equation}
F(p)=D+C\star(pG-A)^{-\star}B,
\end{equation}
where $B\in\mathbb H^{n\times m},C\in\mathbb H^{m\times n},G,A\in\mathbb H^{m\times m}$ are constant matrices.
\end{theorem}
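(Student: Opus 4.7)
The plan is to prove the realization theorem by structural induction on the construction of $F$ as a rational matrix function. By definition, $F$ is obtained from quaternionic polynomial matrix functions via finitely many $\star$-additions, $\star$-multiplications, and $\star$-inversions, so it suffices to produce realizations of the stated form for the polynomial building blocks and then show that the class of realizable functions is closed under these three operations. The key simplification is that $p$ commutes with constant quaternionic matrices inside the $\star$-product, so that the variable behaves as a central indeterminate; non-commutativity enters only through the products of coefficient matrices, and as a result the state-space calculus of the complex setting (cf.\ \cite{bgkr}) transfers essentially verbatim.

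For the atomic blocks, a constant matrix $M$ has the trivial realization $D = M$, $C = B = 0$. A positive power $p^k I_n$ is realized by a nilpotent pencil: taking $A = I$ and $G$ a block-shift of nilpotency index $k+1$ gives $(pG - I)^{-\star} = -\sum_{j=0}^{k}(pG)^j$, from which any monomial is read off by an appropriate choice of $B$ and $C$. The negative powers $p^{-k}I_n$ are realized using $G = I$ and $A = 0$, so that $(pI)^{-\star} = p^{-1} I$, and iterating via the multiplicative combination rule below.

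For the combination rules, I would verify that $F_1 + F_2$ is realized with constant term $D_1 + D_2$, a block-diagonal pencil, and stacked $B, C$; that $F_1 \star F_2$ is realized by a cascade construction with a block upper-triangular pencil; and that when the constant term $D$ is $\star$-invertible (which for a constant matrix is the same as ordinary invertibility), the $\star$-inverse is given by the Schur-complement formula
\[
F^{-\star}(p) = D^{-1} - D^{-1} C \star \bigl(pG - (A - B D^{-1} C)\bigr)^{-\star} B D^{-1}.
\]
Each identity is checked by direct $\star$-expansion, reducing to the classical complex computation because $p$ is central in the $\star$-products involved. The freedom to prescribe the constant term $D$ follows from appending, via block-diagonal sum, the trivial realization of the constant matrix $D - D_0$, where $D_0$ is the constant term obtained from the inductive construction; this changes neither the value of the function nor the pencil structure.

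The main obstacle is the $\star$-inversion step when the constant term of some intermediate expression fails to be invertible, since the Schur-complement formula then breaks down. I would sidestep this by reducing from the outset to the common-denominator representation $F(p) = r(p)^{-1} Q(p)$, where $r$ is a scalar polynomial with real coefficients and $Q$ is a quaternionic matrix polynomial. Since $r$ is central, $r(p)^{-1}$ can be realized scalar-wise via partial fractions --- using $(p - \alpha)^{-\star} = (p - \alpha)^{-1}$ for real roots $\alpha$, and combining complex-conjugate pairs into degree-two real factors --- and combining this with the polynomial realization of $Q$ via the product rule gives the desired realization, after which the constant-term adjustment above yields any prescribed $D$.
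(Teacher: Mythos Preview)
Your approach is genuinely different from the paper's. The paper splits $F$ as $K+L$ with $K$ strictly proper (slice hyperholomorphic at $\infty$) and $L$ a polynomial matrix; it realizes $K$ by substituting $p\mapsto p^{-1}$ and invoking the realization theorem of \cite{acs} for functions regular at the origin, and realizes $L$ via the nilpotent shift pencil you also describe, then combines the two block-diagonally. Your route through the common-denominator form $F=r^{-1}Q$, together with a partial-fraction realization of $r^{-1}$ and the cascade product rule, is more self-contained (it does not appeal to an external realization result) but somewhat heavier: the product construction inflates the state dimension, and the partial-fraction step for the real quadratic factors is only sketched.

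One step, however, fails as written: the constant-term adjustment. You propose to pass from a realization with constant term $D_0$ to one with the prescribed $D$ by ``appending, via block-diagonal sum, the trivial realization of the constant matrix $D-D_0$,'' asserting that this ``changes neither the value of the function nor the pencil structure.'' But the block-diagonal sum of two realizations represents the \emph{sum} of the underlying functions, so adjoining the constant $D-D_0$ yields a realization of $F+(D-D_0)$, not of $F$. The repair is easy: realize the constant $D_0-D$ itself in pencil form, e.g.\ with $G'=0$, $A'=-I$ (so that $(pG'-A')^{-\star}=I$) and $C'B'=D_0-D$; then
\[
F \;=\; D + C'(pG'-A')^{-\star}B' + C\star(pG-A)^{-\star}B,
\]
and a block-diagonal combination of the two pencil terms gives the desired realization with constant term $D$. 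The paper sidesteps this altogether by absorbing the discrepancy $D-K(\infty)$ into the polynomial part $L$ before realizing it.
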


\begin{remark}
The realization discussed in \cite{acs}, of the form 
$$
F(p)=\tilde D+p\tilde C\star (I-p\tilde A)^{-\star}\tilde B,
$$
assumes that $F$ is slice hyperholomorphic in a neighborhood of the origin (but need not be a square matrix). The theorem above has the restriction of $F$ being a square rational matrix function, but nothing else is assumed.
\end{remark}

\begin{proof}

The proof is largely unchanged from that of the complex case, but we will make sure that the arguments are still valid. First, $F(p)$ admits the following decomposition:
\begin{equation}
F(p)=K(p)+L(p),
\end{equation}
where $K(p)$ is slice hyperholomorphic at $\infty$ (that is, a proper rational matrix function, all of whose entries having a greater degree in the denominator than in the nominator) and $L(p)$  is a polynomial matrix. This follows from applying Euclid's division algorithm to each entry of $F(p)$; it should be noted that each entry admits the form $\dfrac{q(p)}{r(p)}$, where $r$ is a polynomial with real coefficients, and thus the algorithm may be applied as in a field (as any polynomial commutes with $r(p)$).
\newline
Without loss of generality, $K(\infty)=D$ (we can subtract a constant matrix from $K(p)$ and add it to $L(p)$). Let $G(p):=K(p^{-1})$. Since $G(0)=D$ and $G$ is defined at 0, it admits a realization of the form 
$$
G(p)=D+pC_K\star (I-pA_K)^{-\star}B_K,
$$
which yields 
$$
K(p)=D+p^{-1}C_K\star (I-p^{-1}A_K)^{-\star}B_K=D+C_K\star (pI-A_K)^{-\star}B_K.
$$
As for $L$, being a polynomial matrix we can write it as $$L(p)=L_0+pL_1+\dots+p^qL_q,$$ where $L_0,\dots,L_q$ are constant matrices. We introduce
\[
G_L=\left[\begin{matrix}
0  & I_m   &     \\
   & 0 & \ddots &    \\
  &  & \ddots & I_m \\
  &  &  & 0
\end{matrix}\right], \quad
B_L=\left[\begin{matrix}
L_0    \\
L_1    \\
\vdots \\
L_q
\end{matrix}\right], \quad
C_L=\left[\begin{matrix}
-I_m &0 &\cdots &0
\end{matrix}\right].
\]
The matrix $G_L$ is square of size $l=m(q+1)$, and is nilpotent of order $q+1$. It is easy to see that $$(I_l-pG_L)^{-\star}=\sum_{k=0}^{q}p^kG_L^k.$$ It follows that
$$
C_L\star (pG_L-I_l)^{-\star}B_L=-\sum_{k=0}^{q}p^kC_LG_L^kB_L=-\sum_{k=0}^{q}p^k(-L_k)=L(p).
$$
Finally, we can obtain the realization (5.1) for $F(p)$ by taking
\[
A=\left[\begin{matrix}
A_K & 0    \\
0 & I_l
\end{matrix}\right], \quad
B=\left[\begin{matrix}
B_K   \\
B_L
\end{matrix}\right], \quad
C=\left[\begin{matrix}
C_K & C_L 
\end{matrix}\right], \quad
G=\left[\begin{matrix}
I & 0    \\
0 & G_L
\end{matrix}\right].
\]
Here $I$ is of the same size as $A_K$.
\end{proof}

\begin{theorem}
Let $F$ be a rational $n\times n$ matrix function without poles on $\partial\mathbb B$ given by the realization
\begin{equation}
F(p)=I_n+C\star(pG-A)^{-\star}B.
\end{equation}
Put $A^{\times}=A-BC$. Then $F$ admits a canonical factorization if and only if the following two conditions are satisfied:
\begin{enumerate}
\item[(i)] $(pG-A^{\times})^{-\star}$ has no poles on $\partial\mathbb B$.
\item[(ii)] $\mathbb H^m=\Ima Q\oplus \Ker Q^{\times}$ and $\mathbb H^m=\Ima P\oplus \Ker P^{\times}$.
\end{enumerate}
Here $m$ is the order of the matrices $G$ and $A$, and
\begin{equation}
\begin{aligned}
Q=\dfrac{1}{2\pi}\int_{0}^{2\pi}e^{it}(pG-A)^{-\star}(e^{it})Gdt,\\ 
P=\dfrac{1}{2\pi}\int_{0}^{2\pi}Ge^{it}(pG-A)^{-\star}(e^{it})dt, \\
Q^{\times}=\dfrac{1}{2\pi}\int_{0}^{2\pi}e^{it}(pG-A^{\times})^{-\star}(e^{it})Gdt, \\
P^{\times}=\dfrac{1}{2\pi}\int_{0}^{2\pi}Ge^{it}(pG-A^{\times})^{-\star}(e^{it})dt.
\end{aligned}
\end{equation}
The above formulae do not depend on the choice of $i\in\mathbb S$, and the equalities in (ii) are equivalent. Moreover, if a canonical factorization exists, then it can be obtained by taking
\begin{equation}
\begin{aligned}
F_-(p)&=I_n+C\star (pG-A)^{-\star}(I-\sigma)B, \\
F_+(p)&=I_n+C\tau\star (pG-A)^{-\star}B,
\end{aligned}
\end{equation}
with inverses given by
\begin{equation}
\begin{aligned}
F_-^{-\star}(p)&=I_n-C(I-\tau)\star (pG-A^{\times})^{-\star}B, \\
F_+^{-\star}(p)&=I_n-C\star (pG-A^{\times})^{-\star}\sigma B.
\end{aligned}
\end{equation}
Here $\tau$ is the projection of $\mathbb H^m$ along $\Ima Q$ onto $\Ker Q^{\times}$ and $\sigma$ is the projection along $\Ima P$ onto $\Ker P^{\times}$. 
\end{theorem}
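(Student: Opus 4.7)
The plan is to lift the problem to the complex setting via $\omega=\omega_{i,j}$ (for a fixed choice of $i\perp j\in\mathbb S$), invoke the canonical factorization theorem for complex rational matrix functions in realized form from \cite{bgkr}, and then pull the factorization back. By the multiplicativity of $\omega$ (Lemma 2.17) applied to the realization (5.3),
\[
\omega(F)(z)=I_{2n}+\chi(C)\bigl(z\chi(G)-\chi(A)\bigr)^{-1}\chi(B),
\]
a realization in $\mathcal W^{2n\times 2n}$ whose associate matrix is $\chi(A)-\chi(B)\chi(C)=\chi(A^\times)$. The complex canonical factorization theorem then asserts that $\omega(F)$ admits canonical factorization iff $(z\chi(G)-\chi(A^\times))^{-1}$ has no poles on $\partial\mathbb B\cap\mathbb C_i$ and certain complex Riesz projections $\widehat Q,\widehat P,\widehat Q^\times,\widehat P^\times\in\mathbb C_i^{2m\times 2m}$ (defined by the same contour integrals, with $\chi(G),\chi(A),\chi(A^\times)$ in place of $G,A,A^\times$) give direct-sum decompositions $\mathbb C_i^{2m}=\Ima\widehat Q\oplus\Ker\widehat Q^\times$ and analogously for $P$.

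The next step is to show that these complex conditions are exactly the quaternionic ones in (i) and (ii). Condition (i) is immediate from the fact that poles of $(pG-A^\times)^{-\star}$ on $\partial\mathbb B$ correspond under $\omega$ to poles of $(z\chi(G)-\chi(A^\times))^{-1}$ on $\partial\mathbb B\cap\mathbb C_i$. For (ii), one first checks $\widehat Q=\chi(Q)$, $\widehat P=\chi(P)$, $\widehat Q^\times=\chi(Q^\times)$, $\widehat P^\times=\chi(P^\times)$, which requires showing that $Q,P,Q^\times,P^\times$ as defined in (5.4) are genuine $\mathbb H^{m\times m}$-matrices (in particular, independent of the choice of $i$): this rests on the slice hyperholomorphic version of Cauchy's theorem and on the identity $J_m\overline{\omega(pG-A)^{-1}(-z)}J_m^T=\omega(pG-A)^{-1}(z)$, which forces the integrand to average to a $\chi$-image. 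Once identified, $\chi$ restricts to a bijection between $\mathbb H^m$-right-subspaces and $J_m\overline{(\cdot)}J_m^T$-invariant subspaces of $\mathbb C_i^{2m}$ preserving direct sums, so (ii) is equivalent to its complex counterpart; the equivalence of the two decompositions in (ii) then passes through $\omega$ from the corresponding complex fact. The independence from $i$ of the formulas in (5.4) also follows.

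Given the conditions hold, the complex theorem produces the factorization of $\omega(F)$ in terms of the complex projections $\widehat\tau,\widehat\sigma$ along $\Ima\widehat Q,\Ima\widehat P$ onto $\Ker\widehat Q^\times,\Ker\widehat P^\times$. Because these subspaces are $\chi$-images of $\mathbb H^m$-subspaces, $\widehat\tau=\chi(\tau)$ and $\widehat\sigma=\chi(\sigma)$, so substituting into the complex formulas yields matrices of the form $J_n\overline{(\cdot)(-z)}J_n^T=(\cdot)(z)$, hence in $\omega\bigl(\mathcal W_{\mathbb H}^{n\times n}\bigr)$. Applying $\omega^{-1}$ gives precisely (5.5) and (5.6), and multiplicativity of $\omega$ together with the complex identity $\omega(F)=\omega(F_-)\omega(F_+)$ gives $F=F_-\star F_+$. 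Membership $F_\pm\in\mathcal W_{\mathbb H,\pm}^{n\times n}$ with invertible inverses in the same subalgebras reduces, via $\omega$, to the corresponding complex statement, which is built into the conclusion of the complex theorem.

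The main obstacle I anticipate is the intermediate bookkeeping step of justifying that the spectral projections defined by (5.4) are truly quaternionic (i.e.\ independent of the splitting $i\perp j$) and that $\chi$ intertwines the quaternionic direct-sum hypothesis in (ii) with its complex analogue; once this $J_n\overline{(\cdot)}J_n^T$-symmetry bookkeeping is cleanly set up, the rest of the argument is essentially a translation of the complex proof in \cite{bgkr} with pointwise products replaced by $\star$-products and with the observation that the resolvent $(pG-A)^{-\star}$ is the natural slice hyperholomorphic replacement for $(zG-A)^{-1}$.
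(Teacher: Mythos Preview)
Your proposal is correct and follows essentially the same route as the paper: lift via $\omega$ to the complex realization, invoke the complex canonical factorization theorem, verify that the Riesz projections satisfy $\chi(Q)=\widehat Q$ etc., translate the direct-sum conditions through the $\mathbb H^m\leftrightarrow\mathbb C_i^{2m}$ correspondence, and pull the explicit factor formulas back. The paper carries out the identification $\chi(Q)=\widehat Q$ by expanding $(pG-A)^{-\star}=\sum_k p^kB_k$ and reading off the $(-1)$-coefficient on both sides (which also yields the $i$-independence), and it packages the subspace correspondence via an explicit additive isomorphism $\rho:\mathbb H^m\to\mathbb C_i^{2m}$ rather than the $J_m$-symmetry language you use; these are cosmetic differences, and the obstacle you flag is exactly the step the paper isolates into two short lemmas.
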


\begin{remark}
The factors $F_-,F_+$ in any canonical factorization are uniquely determined up to the transformation $F_-C,C^{-1}\star F_+$, where $C\in\mathbb H^{n\times n}$ is constant and invertible. This follows from the same argument used in the scalar-valued case of Theorem 3.10.
\end{remark}

It should be noted that the known theorem (see \cite{gms}) for complex-valued rational matrix functions is almost the same (with $\star$-multiplication replaced by regular matrix multiplication, $\mathbb H^n$ replaced by $\mathbb C^n$, and $F$ being a rational $n\times n$ matrix function without poles on $\partial\mathbb D$). To reduce the quaternionic case to the complex one, we need two lemmas:

\begin{lemma}
Given $i\perp j \in \mathbb S$, define $\rho:\mathbb H^n\rightarrow \mathbb C^n$  by $\rho(w)=[u \quad -\overline v]^T$, where $u,v\in\mathbb C_i^n$ are such that $w=u+vj$. Then for any $A\in \mathbb H^{n\times n}$, $w\in \mathbb H^n$ we have $\rho(Aw)=\chi(A)\rho(w)$.
\end{lemma}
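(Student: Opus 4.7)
The proof is a direct matrix computation, relying on the two basic algebraic facts that govern the interaction of $\mathbb C_i$ and $j$ inside $\mathbb H$: namely $j^2=-1$ and $j\alpha=\overline{\alpha}j$ for every $\alpha\in\mathbb C_i$ (equivalently, applied entrywise, $j\alpha=\overline{\alpha}j$ for $\alpha\in\mathbb C_i^n$). The first step is to use the unique decompositions provided by Definition 1.1: write $A=A_1+A_2 j$ with $A_1,A_2\in\mathbb C_i^{n\times n}$ and $w=u+vj$ with $u,v\in\mathbb C_i^n$, so that $\chi(A)=\begin{bmatrix} A_1 & A_2 \\ -\overline{A_2} & \overline{A_1}\end{bmatrix}$ and $\rho(w)=\begin{bmatrix} u \\ -\overline{v}\end{bmatrix}$.

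Next, expand the product $Aw$ by distributing and moving all $j$'s to the right using the identity $j\alpha=\overline{\alpha}j$:
\begin{equation*}
Aw=(A_1+A_2 j)(u+vj)=A_1u+A_1vj+A_2(ju)+A_2(jv)j=\bigl(A_1u-A_2\overline{v}\bigr)+\bigl(A_2\overline{u}+A_1 v\bigr)j,
\end{equation*}
using $ju=\overline{u}j$, $jv=\overline{v}j$ and $j^2=-1$. This realizes $Aw$ in the form $u'+v'j$ with $u'=A_1u-A_2\overline{v}\in\mathbb C_i^n$ and $v'=A_2\overline{u}+A_1 v\in\mathbb C_i^n$.

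Applying $\rho$ then gives
\begin{equation*}
\rho(Aw)=\begin{bmatrix} A_1u-A_2\overline{v} \\ -\overline{A_2\overline{u}+A_1 v}\end{bmatrix}=\begin{bmatrix} A_1u-A_2\overline{v} \\ -\overline{A_2}\,u-\overline{A_1}\,\overline{v}\end{bmatrix},
\end{equation*}
where we used $\overline{A_2\overline{u}}=\overline{A_2}\,u$ and $\overline{A_1 v}=\overline{A_1}\,\overline{v}$. A direct block-multiplication shows this coincides with
\begin{equation*}
\chi(A)\rho(w)=\begin{bmatrix} A_1 & A_2 \\ -\overline{A_2} & \overline{A_1}\end{bmatrix}\begin{bmatrix} u \\ -\overline{v}\end{bmatrix}=\begin{bmatrix} A_1u-A_2\overline{v} \\ -\overline{A_2}\,u-\overline{A_1}\,\overline{v}\end{bmatrix},
\end{equation*}
which proves $\rho(Aw)=\chi(A)\rho(w)$.

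There is no substantive obstacle here; the only thing to watch is bookkeeping of complex conjugation when commuting $j$ past $\mathbb C_i$-vectors and matrices. In effect, $\rho$ is just the column vector analogue of $\chi$, and the lemma expresses that $\chi$ intertwines left multiplication by quaternionic matrices on $\mathbb H^n$ with left multiplication by $\chi(A)$ on the image of $\rho$.
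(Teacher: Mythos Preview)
Your computation is correct. The paper takes a slightly different, shorter route: it observes that $\rho(w)$ is exactly the left column of $\chi(w)$ (where $\chi$ is applied to the $n\times 1$ ``matrix'' $w$, yielding a $2n\times 2$ matrix), and then simply invokes the already-established multiplicativity $\chi(Aw)=\chi(A)\chi(w)$ and reads off the left columns. Your approach redoes the underlying algebra directly, which is perfectly valid and self-contained, whereas the paper's approach is more conceptual and avoids repeating the $j\alpha=\overline{\alpha}j$ bookkeeping by leveraging the prior result about $\chi$.
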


\begin{proof}
Since $\chi$ is multiplicative, we have
$$
\chi(Aw)=\chi(A)\chi(w)=\chi(A)\left[\begin{matrix}
u & v \\
-\overline v & \overline u
\end{matrix}\right].
$$
Taking the left columns of both sides, we get $\rho(Aw)=\chi(A)\rho(w)$.

\end{proof}

\begin{lemma}
Let $F\in \mathcal W_{\mathbb H}^{n\times n}$ and $i\perp j \in \mathbb S$. Then $F$ admits a canonical factorization if and only if $\omega(F)$ does.
\end{lemma}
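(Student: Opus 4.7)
The plan is to reduce the lemma to Theorem 3.10 together with the uniqueness (up to ordering) of the diagonal factor in complex Wiener--Hopf factorization. The two directions are nearly symmetric, and in both the key observation is that $\omega$, being defined coefficient-wise via $\chi$, preserves the $\pm$ subalgebra structure and intertwines invertibility within those subalgebras.

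For the forward direction, suppose $F = F_- \star F_+$ is a canonical factorization in $\mathcal W_{\mathbb H}^{n \times n}$. Since $\omega$ applies $\chi$ to each coefficient separately and does not mix the index ranges $u \geq 0$ and $u < 0$, we have $\omega(F_\pm) \in \mathcal W_\pm^{2n \times 2n}$. As $F_\pm$ is invertible in $\mathcal W_{\mathbb H,\pm}^{n \times n}$, its $\star$-inverse lies in the same subalgebra, and by the multiplicativity of $\omega$ (Lemma 2.16 for matrices) the element $\omega((F_\pm)^{-\star}) \in \mathcal W_\pm^{2n \times 2n}$ is an actual inverse of $\omega(F_\pm)$ in $\mathcal W_\pm^{2n \times 2n}$. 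Therefore $\omega(F) = \omega(F_-)\,\omega(F_+)$ is a canonical factorization in $\mathcal W^{2n \times 2n}$.

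For the backward direction, suppose $\omega(F) = G_- G_+$ is a canonical factorization in $\mathcal W^{2n \times 2n}$. Then $\omega(F)$ is invertible in $\mathcal W^{2n \times 2n}$, which by Corollary 2.19 forces $F$ to be invertible in $\mathcal W_{\mathbb H}^{n \times n}$. Theorem 3.10 then produces a factorization $F = F_- \star D \star F_+$ with $D(p) = \diag[p^{k_1}, \dots, p^{k_n}]$ and $k_1 \geq \dots \geq k_n$. Applying $\omega$ yields
$$
\omega(F) = \omega(F_-)\,\omega(D)\,\omega(F_+),
$$
a factorization of $\omega(F)$ in $\mathcal W^{2n \times 2n}$. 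Because each coefficient matrix of $D$ is a real diagonal, $\chi$ sends it to its block-diagonal double, so
$$
\omega(D)(z) = \begin{bmatrix} D(z) & 0 \\ 0 & D(z) \end{bmatrix} = \diag[z^{k_1}, \dots, z^{k_n}, z^{k_1}, \dots, z^{k_n}].
$$
By the uniqueness (up to ordering) of the diagonal factor of a Wiener--Hopf factorization in $\mathcal W^{2n \times 2n}$, the multiset of indices of $\omega(D)$ coincides with that of the given canonical factorization of $\omega(F)$, which is $\{0, \dots, 0\}$. Hence every $k_i = 0$, so $D = I_n$, and $F = F_- \star F_+$ is canonical.

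The main obstacle -- if it even deserves the name -- is verifying the compatibility of $\omega$ with the $\pm$ subalgebras and the index-doubling effect of $\omega$ on diagonal matrices, both of which are immediate from the coefficient-wise definitions. Once these are in hand, the genuine content of the lemma is supplied entirely by Theorem 3.10 and the classical uniqueness of complex Wiener--Hopf indices.
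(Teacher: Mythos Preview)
Your argument is correct and follows essentially the same route as the paper's own proof: the forward direction applies $\omega$ to a canonical factorization of $F$, and the backward direction uses invertibility of $\omega(F)$ (via Corollary~2.19) to invoke Theorem~3.10, then compares the doubled index set $[k_1,\dots,k_n,k_1,\dots,k_n]$ of $\omega(F)$ against the all-zero indices forced by the assumed canonical factorization. Your write-up is somewhat more explicit about $\omega$ preserving the $\pm$ subalgebras and invertibility therein, but the underlying logic is identical.
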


\begin{proof}
Corollary 2.19 and Theorem 3.10 imply that $F$ and $\omega(F)$ are simultaneously factorizable. If $F$ admits a canonical factorization $F(p)=F_-(p)\star F_+(p)$, then $\omega(F)(z)=(\omega(F_-)\omega(F_+))(z)$ is a canonical factorization. In the other direction, let $[k_1,\dots,k_n]$ be the index set of $F$. Then the index set of $\omega(F)$ is $[k_1,k_1,\dots,k_n,k_n]$, which implies $k_1=\dots=k_n=0$ given the canonical factorization. Thus $F$ is canonically factorizable.
\end{proof} 

Then we are ready to prove Theorem 5.3:

\begin{proof}
Since $F$ has no poles on $\partial\mathbb B$, Proposition 5.1 implies that $F\in \mathcal W_{\mathbb H}^{n\times n}$. Thus $F'(z):=(\omega(F))(z)$ is well defined (picking arbitrary $i\perp j\in \mathbb S$), and further, it is a $2n\times 2n$ complex-valued rational matrix function without poles on $\partial\mathbb B \cap \mathbb C_i$. Applying $\omega$ on (5.3), we see that
\begin{equation}
F'(z)=I_{2n}+C'(zG'-A')^{-1}B',
\end{equation}
where $$A'=\chi(A),\quad B'=\chi(B),\quad C'=\chi(C),\quad G=\chi(G).$$ Let $A'^{\times}=A'-B'C'$. The theorem in the complex case stipulates that $F'$ admits a canonical factorization if and only if the following two conditions are satisfied:
\begin{enumerate}
\item[(iii)] $(zG-A'^{\times})^{-1}$ has no poles on $\partial\mathbb B \cap \mathbb C_i$.
\item[(iv)] $\mathbb C^{2m}=\Ima Q'\oplus \Ker Q'^{\times}$ and $\mathbb C^{2m}=\Ima P'\oplus \Ker P'^{\times}$,
\end{enumerate}
where
\begin{equation}
\begin{aligned}
Q'=\dfrac{1}{2\pi i}\int_{\partial\mathbb B \cap \mathbb C_i}(zG'-A')^{-1}G'dz, \\
P'=\dfrac{1}{2\pi i}\int_{\partial\mathbb B \cap \mathbb C_i}G'(zG'-A')^{-1}dz, \\
Q'^{\times}=\dfrac{1}{2\pi i}\int_{\partial\mathbb B \cap \mathbb C_i}(zG'-A'^{\times})^{-1}G'dz, \\
P'^{\times}=\dfrac{1}{2\pi i}\int_{\partial\mathbb B \cap \mathbb C_i}G'(zG'-A'^{\times})^{-1}dz.
\end{aligned}
\end{equation}
Due to Lemma 5.5, it suffices to show that (i) is equivalent to (iii) and that (ii) is equivalent to (iv). Since $A'^{\times}=\chi(A-BC)=\chi(A^{\times})$, we have $\omega(pG-A^{\times})(z)=zG'-A'^{\times}$. Then by Proposition 5.1 and Corollary 2.19, (iii) is indeed equivalent to (i).
The two conditions in (iv) are known to be equivalent. To connect them to (ii), we need to check that 
$$
Q'=\chi(Q),\quad P'=\chi(P),\quad Q'^{\times}=\chi(Q^{\times}),\quad P'=\chi(P^{\times}).
$$ 
Assuming (i), $(pG-A)^{-\star}$ is of the form $\sum_{k=-\infty}^{\infty}p^kB_k$, where the series is absolutely summable entry-wise. Then  $$(zG'-A')^{-1}=\sum_{k=-\infty}^{\infty}z^k\chi(B_k)$$ is also absolutely summable, and we can interchange integration and summation as follows:
$$
Q=\dfrac{1}{2\pi}\int_{0}^{2\pi}e^{it}(pG-A)^{-\star}(e^{it})Gdt=\sum_{k=-\infty}^{\infty}\dfrac{1}{2\pi}\int_{0}^{2\pi}e^{i(k+1)t}B_kGdt=B_{-1}G,
$$
\begin{equation*}
\begin{split}
Q'&=\dfrac{1}{2\pi i}\int_{\partial\mathbb B \cap \mathbb C_i}(zG'-A')^{-1}G'dz=\sum_{k=-\infty}^{\infty}\dfrac{1}{2\pi i}\int_{\partial\mathbb B \cap \mathbb C_i}z^k\chi(B_k)G'dz\\
&=\chi(B_{-1})G'=\chi(Q).
\end{split}
\end{equation*}
The other cases are shown similarly; the above calculation also shows that formulae (5.4) do not depend on the choice of $i \in \mathbb S$. Now, it follows that $\Ima Q'=\rho(\Ima Q)$, since $Q'\rho(w)=\chi(Q)\rho(w)=\rho(Qw)$ by Lemma 5.4 (and it is easy to see that $\rho$ is a bijection). Similarly, 
$$\Ima P'=\rho(\Ima P),\quad \Ker P'^{\times}=\rho(\Ker P^{\times}),\quad \Ker Q'^{\times}=\rho(\Ker Q^{\times}).
$$ Since $\rho:\mathbb H^m\rightarrow \mathbb C^{2m}$ is an additive isomorphism, we have
\begin{equation}
\begin{aligned}
\mathbb H^m=\Ima Q\oplus \Ker Q^{\times} \iff \mathbb C^{2m}=\Ima Q'\oplus \Ker Q'^{\times} \iff \\
\iff \mathbb C^{2m}=\Ima P'\oplus \Ker P'^{\times} \iff \mathbb H^m=\Ima P\oplus \Ker P^{\times}.
\end{aligned}
\end{equation}
To prove the formulae for $F_-,F_+$, first note that the theorem in the complex case implies 
\begin{equation}
\begin{aligned}
\omega(F_-)(z)&=I_{2n}+C'(zG-A)^{-1}(I-\sigma')B, \\
\omega(F_+)(z)&=I_{2n}+C\tau'(zG-A)^{-1}B,
\end{aligned}
\end{equation}
with inverses given by
\begin{equation}
\begin{aligned}
\omega(F_-)^{-1}(z)&=I_{2n}-C(I-\tau')(zG-A^{\times})^{-1}B, \\
\omega(F_+)^{-1}(z)&=I_{2n}-C(zG-A^{\times})^{-1}\sigma' B,
\end{aligned}
\end{equation}
where $\tau'$ is the projection of $\mathbb C^{2m}$ along $\Ima Q'$ onto $\Ker Q'^{\times}$ and $\sigma'$ is the projection along $\Ima P'$ onto $\Ker P'^{\times}$. Now $\tau'=\chi(\tau)$, $\sigma'=\chi(\sigma)$. To see this, let $w\in \mathbb H^m$ decompose as $w=w_1+w_2$, where $w_1\in \Ima Q, w_2\in \Ker Q^{\times}$. Then $$\rho(w)=\rho(w_1)+\rho(w_2),$$ where $\rho(w_1)\in \Ima Q', \rho(w_2)\in \Ker Q'^{\times}$. By definition, we have $\tau(w)=w_2$, $\tau'\rho(w)=\rho(w_2)$ and so
$$
\tau'\rho(w)=\rho(\tau(w))=\chi(\tau)\rho(w).
$$
Since $w$ is arbitrary (and $\rho$ is surjective), $\tau'=\chi(\tau)$. Similarly, $\sigma'=\chi(\sigma)$. Finally, formulae (5.5) and (5.6) follow from formulae (5.10) and (5.11) using the fact that $\omega$ is injective, additive and multiplicative.
\end{proof}

\bibliographystyle{plain}


\end{document}